\newtheorem{theorem}{Theorem}[section]
\newtheorem{lemma}[theorem]{Lemma}
\newtheorem{claim}[theorem]{Claim}
\newtheorem{corollary}[theorem]{Corollary}
\newtheorem{remark}[theorem]{Remark}
\newtheorem{conjecture}[theorem]{Conjecture}
\newtheorem*{theorem*}{Theorem}
\patchcmd{\thmhead}{(#3)}{#3}{}{}
\newcommand{\eps}{\varepsilon}
\begin{document}

\author{Jordan Chellig} 
\address{School of Mathematics, University of Birmingham B15 2TT, Birmingham, United Kingdom.}  \email{jac555@student.bham.ac.uk}
 \author{Calina Durbac}
 \address{Department of Computer Science, University of Durham, DH1 3LE, Durham, United Kingdom.} \email{calina.m.durbac@durham.ac.uk}
  \author{Nikolaos Fountoulakis}
 \address{School of Mathematics, University of Birmingham B15 2TT, Birmingham,  United Kingdom.}\email{n.fountoulakis@bham.ac.uk}

%\address{$^1$School of Mathematics, University of Birmingham B15 2TT, Birmingham, \newline United Kingdom.  \texttt{JAC555@student.bham.ac.uk}}
%
%\address{$^2$Department of computer science, University of Durham, DH1 3LE, Durham, United Kingdom. \texttt{CMD639@alumni.bham.ac.uk}}
%
%\address{$^3$School of Mathematics, University of Birmingham B15 2TT, Birmingham,  \newline United Kingdom. \texttt{N.Fountoulakis@bham.ac.uk}}

\title[Best response dynamics on Random Graphs]{Best response dynamics on random graphs}

\thanks{This research started during the Undergraduate Research Bursary 17-18 91 (for Calina Durbac under the supervision of Nikolaos Fountoulakis) 
funded by the London Mathematical Society and the School of Mathematics in Birmingham. 
Jordan Chellig is supported by an EPSRC scholarship. 
Nikolaos Fountoulakis has also been partially supported by an Alan Turing Fellowship EP/N5510129/1
and the EPRSC grant EP/P026729/1.}

\begin{abstract}
We consider evolutionary games on a population whose underlying topology of interactions is determined by 
a binomial random graph $G(n,p)$. Our focus is on 2-player symmetric games with 2 strategies played between the incident members of such a population. Players update their strategies synchronously. At each round, each player selects the strategy that is the best response to the current set of strategies its neighbours play. 
We show that such a system reduces to generalised majority and minority dynamics. We show rapid 
convergence to unanimity for $p$ in a range that depends on a certain characteristic of the payoff matrix. 
In the presence of a bias among the pure Nash equilibria of the game, 
we determine a sharp threshold on $p$ above which the largest connected component reaches unanimity with high probability. 
For $p$  below this critical value, where this does not happen, we identify those 
substructures inside the largest component that remain discordant throughout the evolution of the system. 
\end{abstract}
\subjclass[2010]{Primary 05C80, 91A22; Secondary 91A05}
\keywords{random graphs, evolutionary games, unanimity}
\maketitle

\section{Introduction}

The need for a dynamic game theory was pointed out by von Neumann and Morgenstern in their seminal book~\cite{bk:NeumannMorgen} which set the foundations of modern game theory. Research on dynamic games on populations was stimulated by settings in evolutionary biology. In 1973 John Maynard Smith and George Price~\cite{ar:SmithPrice1973} set the foundations of \emph{evolutionary game theory}, trying to explain phenomena that arise in animal fighting, which contradict 
the traditional Darwinian theory. 

 Maynard Smith and Price~\cite{ar:SmithPrice1973} considered a variation of the well-known hawk-dove game (see below for a precise description of its payoff matrix) with more than two strategies, which range from purely altruistic to hawkish behaviour. They argued that altruistic behaviour in such a population may have higher payoff 
 compared to the payoff in a pure population consisting only of hawks. Introducing the notion of an \emph{evolutionary stable strategy}, they argued that natural selection would favour the co-existence of behaviours in a given population. 
In this setting, every member of the population may interact with any other member. That is, in graph-theoretic terms, this is an evolutionary system on a complete graph. 

Later on,~Nowak and May~\cite{ar:NowMay1992} considered settings with non-trivial underlying topology. They considered a population of agents that are located on  the vertices of a 2-dimensional lattice, in which every agent interacts only 
with its four neighbours. The interaction is that any two adjacent agents play the prisoner's dilemma (which we 
describe below) in which they may \emph{cooperate} or \emph{defect}. The dynamics that was considered there 
depends on the total payoff of each agent that is accumulated by the four games it plays with its neighbours (or 
three or two, if it is located on the boundary of the lattice). The agents update their strategies synchronously, with 
each agent adopting the strategy of a neighbour who had the largest total payoff in the previous round. 
The main observation in~\cite{ar:NowMay1992} is the \emph{co-existence} of the two strategies in the long term, 
despite the fact that the 
defect strategy is a \emph{Nash equilibrium} for this game: that is, in a game between two players who both defect none has interest in cooperating, given that the other does not. Similar dynamics were studied by Santos and Pacheko~\cite{ar:SantPach2005} in the preferential attachment model, which typically yields graphs which have some properties of complex networks. 
 
In this paper, we consider a class of dynamics of games on networks in which agents seek to 
choose a~\emph{local dominating strategy}. In this process, every agent switches their strategy to the one which 
maximises their own payoff, given the strategies of its opponents. In other words, in each round each agent 
tries to give their \emph{best response} to the current choice of strategies of their opponents. This dynamics was 
first considered by Gilboa and Matsui~\cite{ar:GilbMat91} in  continuous-time  setting and for a population 
with no underlying topology, where everyone interacts with everyone else. Gilboa and Matsui showed the 
existence of~\emph{cyclically stable sets}. Roughly speaking, these are sets of configurations of the 
population in which the best response dynamics is trapped. 

%The notions of competition and co-operation are central themes in the study of social choice, economics, computer science and a wide array of problems in complex networks, \cite{bonato2018dynamic, bonato2019centrality, pitis2020objective}. 
%
%Many of the large scale behaviours observed within populations are the consequences of individuals maximising their personal gain on a local level. Generally, individuals who utilise successful strategies will become more prominent within the population; consequently, this leads to the emergence of global dominant strategies, which are then adopted by large swathes of the population. This phenomena is captured by the study of \emph{evolutionary game theory}, wherein players will make choices dependent on the outcomes of previous games. 

%A prominent example of this behaviour is observed within the iterated version of \textit{Prisoner's Dilemma}. In this setting, an agent may be more tempted to defect if their opponent has defected on them in the past \cite{akin2016iterated}. In a sequence of repeated games, there is generally a trade off between utilising competitive strategies to maximise individual gain, versus executing co-operative strategies to deter opponents from playing greedily.

We consider a discrete setting and study the evolution of best response dynamics on a random 
graph. Adjacent agents are able to interact with each other via a symmetric game with 2 strategies, on a round-by-round basis. In a given round, each agent will choose one of the two (pure) strategies, which they will use against all of their neighbours. For each game played, the agent will receive a payoff determined by the corresponding entry in the payoff matrix; subsequently, each agent will also receive a total payoff, which is the sum of all of the individual payoffs that the agent received in that round. In the next round, agents will choose to execute the strategy which would have given them the largest payoff in the previous round. In essence, each agent is focused on maximising their own total payoff by utilising information gained from the previous round. Our central result concerns the rapid rate at which agents will settle into a global unanimous strategy on a typical sample of the underlying random graph. 

In our analysis, we consider arbitrary 2-player symmetric\footnote{Here, the term ``symmetric" means that the roles of the two players are interchangable.} games and we show that best response dynamics reduce to \emph{generalised majority or minority dynamics}. These terms refer to a general class of discrete processes on graphs 
where vertices have two states and at each round a vertex adopts the state of the majority or the minority of its 
neighbourhood, respectively. 
%In recent years, much work has been devoted to the study of games where generally advantageous strategies are characterised by whether individuals follow, or oppose, the popular consensus. 
Majority games reward the individuals who follow strategies which are in line with the popular opinion; such games are clearly co-operative, and thus agents will tend to form large unanimous coalitions; see \cite{hofbauer2003evolutionary}. On the other hand, minority games capture the idea that agents will benefit from making choices that oppose the popular consensus. The idea of a minority game was introduced to characterise the behaviour of agents within the \emph{El Farol Bar problem}. The problem describes a fixed population who will repeatedly attempt to synchronously choose their favourite evening location; however, only the individuals who choose less crowded locations will be rewarded~\cite{de1999competition, chakrabarti2009kolkata}. This problem captures the underlying tension of minority games. Choosing the lucrative option will clearly reward agents with the greatest payoff, however this payoff is easily spoiled if too many of the participants think alike. If agents adopt this line of thinking, then we deduce that the entire population will reject the optimal payoff, regardless of the fact that the most lucrative option will tend to be uncontested. In the case of the El Farol Bar problem, any deterministic strategy will ultimately fail to satisfy anyone. 

 In the context of market investments, majority and minority strategies are of central importance. As suggested in \cite{marsili2001market}, traders utilising contrarian-like strategies will view the market functioning in a minority-game-like way. They believe goods on the market have a fundamental price and will invest in such a way to discourage the market from deviating away from its fundamental value. On the other hand, investors who are \emph{trend followers} view the market from the context of a majority game. These investors tend to inflate the price of goods which are already travelling on an upward trend. Such investment strategies are believed to be the central cause driving the \emph{pricing bubble} phenomena \cite{challet2001minority,kets2007minority,bk:minority}. The success of each of these strategies is heavily dependent on the payoff agents can expect to receive, along with the motives and actions of their rivals. 

%It is noted that majority and minority games are closely related to statistical mechanics. It is shown in \cite{kozlowski2003statistical}, that the stationary states of majority games can be related to the minima of a Hopfield like Hamiltonian associated to the system. While in \cite{de2003statistical}, the interplay between agents following both majority and minority rules in the same system are studied. Again techniques from statistical mechanics reduce the problem of finding rates of information transfer, to  the problem of locating extremal points of an associated Hamiltonian.  

While the study of evolutionary games on populations with arbitrary mutual interactions is well established, a new focus has now been given to systems of agents which possess an underlying topology. Within these systems, agents are only able to interact with their topological neighbours. The topology of these interactions is commonly represented by means of an underlying network. Consideration of an underlying topology allows us to analyse how local decisions in the system can cascade out to form a global consensus. For example, in~\cite{lelarge2011diffusion} it is shown that a small set of agents which oppose the current consensus, can cause a large contagion of opposing opinions to spread throughout the network. While in~\cite{blume1993statistical} systems of interacting agents on a lattice are considered, agents have the opportunity to asynchronously update their opinion at random times, which are driven by an underlying Poisson point process. In the windows of opportunity which are given by these random times, agents will choose to update their strategies to the one which will now give them the largest current payoff. It is shown that a steady state can be achieved within this system; the shape of this distribution is heavily dependent on how agents choose to update their opinion during these random times. 

The underlying topology which is the main focus of this paper is that of a binomial random graph $G(n,p)$. 
This classic model has been central in the study of random graphs and was first considered by Gilbert~\cite{ar:Gilbert59}. 
In this model, on a set of $n$ vertices each edge appears independently with probability $p=p(n)$. 
We shall consider the evolution of the best response dynamics on $G(n,p)$ for an arbitrary symmetric game 
with two players played on the vertex set of a $G(n,p)$ random graph. As we shall see in our analysis, best response dynamics will reduce to the analysis of generalised majority or minority dynamics on $G(n,p)$. 

Our results show that in a wide range of densities best response dynamics stabilises rapidly. 
We prove a general result which shows that this is achieved in at most 4 rounds when $np = \Omega (n^{1/2})$. 
However, if the game exhibits a certain form of bias in terms of its pure Nash equilibria, then we get very 
precise results on how this dynamics evolves for $p$ such that $np \gg 1$. Let us recall that in this range 
$G(n,p)$ may not be connected. However, typically 
its largest connected component contains almost all vertices, 
whereas each one of the other components is a tree and has order at most $\log n$. (The references~\cite{bk:BollobasRG,bk:JLR2000} provide a detailed description of the typical structure and the 
evolution of $G(n,p)$.)
In the presence of such a bias, we determine a sharp threshold on $p$ above which the largest connected component reaches consensus among its vertices. For $p$ below this critical value, we identify those 
substructures inside the largest component that are in disagreement with the majority of the vertices therein. 
Hence, we are able to characterise the co-existence of strategies very precisely. 

We layout our paper as follows: In Section \ref{Defn} we provide key definitions for our analysis and define the \emph{interacting node system} on a graph. In Section \ref{Prelimnary_Analysis}, we consider our first approaches to analysing the model and state our two main results. The first concerns the formation of a consensus when the underlying networks is both random and suitably dense. The second result shows that a consensus can be achieved in sparser a regime of $G(n,p)$, given that there exists a certain form of bias among the pure Nash equilibria of the game. 
%This regime largely coincides with the regime in which $G(n,p)$ is typically connected. 
%To lead our analysis, we consider the case where the payoff matrix leads to the system evolving trivially and then proceed to lay the foundations for studying non-trivial evolution. 
This is analysed in Section~\ref{Skewed_Node_Systems}. 
In this case, we identify a sharp threshold for $p$ above which consensus is reached in the largest component of 
$G(n,p)$ in at most $\beta \log n$ rounds, for a positive constant $\beta$, where $\log \cdot$ is the natural logarithm. 
This critical value is determined by the payoff matrix and is below the well-known connectivity threshold of $G(n,p)$ which is $\log n/n$. 
Furthermore, 
we show that if $np > c \log n$, for some $c>1$ which depends on the parameters of the system, then in fact the 
system reaches consensus only after one round. 

In Section \ref{Unskewed_Node_Systems}, we consider games where this bias is no longer present. In this scenario, we observe that the interacting node system reduces to the so-called \emph{majority or minority dynamics} on $G(n,p)$. We sample a selection of results from~\cite{fountoulakis2019resolution}, which concern the rapid stabilisation of agent strategies when playing a majority game on a dense random graph. Using these results as a basis, we proceed to prove our result concerning the formation of consensus strategies in the minority game analogue of the random graph majority game. By combining the above results, we may readily deduce that for any $2 \times 2$ real-valued payoff matrix and suitably dense random graph, the agents of the interacting node system will reach a consensus after at most four rounds, with high probability. 

\section{Iterative Games on Graphs}\label{Defn}
Let $Q = (q_{i,j})$ be a $2 \times 2$ matrix with entries in $\mathbb{R}$; we refer to $Q$ as the \textit{payoff matrix}. (The rows and columns of $Q$ are indexed by $\{0 , 1\}$, which are assumed to represent the strategies.)  Each player will now choose a strategy from $\{0 , 1 \}$. Player 1 will then receive a payoff given by $q_{i,j},$ where $i$ is the strategy chosen by Player 1, and $j$ is the strategy chosen by Player 2. 
Analogously, Player 2 will receive a payoff $q_{j,i}$.

The interactions of the agents/players are represented by a fixed graph $G= (V,E)$. 
The nodes of $G$ represent the agents, and if two nodes are adjacent, then the corresponding agents interact with each other by playing the game with payoff matrix $Q$. We refer to this process as an \textit{interacting node system} $(G , Q , \mathcal{S})$, which we detail formally as follows. We fix a graph $G,$ a payoff matrix $Q,$ and for every $v \in V(G)$ an initial vertex strategy $\mathcal{S}(v)$ where $\mathcal{S}: V(G) \rightarrow \{0 , 1\}$. We consider a discrete time process. For each $t \in \mathbb{N}_0 := \mathbb{N} \cup \{0\}$, we denote by $S_{t}(v)$  the strategy played by vertex $v$ at step $t$. Thus, $S_{t}: V(G) \rightarrow \{0, 1\}$.
Let $N_G(v)$ denote the set of neighbours of a vertex $v \in V(G)$ in $G$.
For a vertex $v\in V(G)$, a step $t \in \mathbb{N}_0$ and $j \in \{0 , 1 \},$ we set $n_{t}(v ; j) = \left\lvert N_{G}(v) \cap \{u : S_{t}(u) = j \} \right\vert$. To each vertex $v \in V(G),$ we assign the initial state $S_{0}(v) = \mathcal{S}(v)$. To progress from round $t$ to round $t+1$, the following evolution rule is applied: Each vertex $v$ will play the game against each one of its neighbours, playing strategy $S_{t}(v)$. For each game played, the vertex will receive a payoff given by the corresponding entry in the payoff matrix $Q.$ We denote the \emph{total payoff} for a vertex $v$ at time step $t$ to be the sum of all payoffs that $v$ received in that round. Therefore, for a vertex $v$ with $S_{t}(v) = i,$ we define the \textit{total payoff} of $v$ at time $t$ as $ T_{t}(v) := n_{t}(v; 0)q_{i , 0} + n_{t}(v; 1)q_{i , 1}.$ We define the \textit{alternative payoff} of $v$ at time $t$, as the total payoff the vertex would have received, had they played the other available strategy. Hence, the alternative payoff for a vertex $v$ with $S_{t}(v) = i$ is given as: $T'_{t}(v) := n_{t}(v; 0)q_{1 - i , 0} + n_{t}(v; 1)q_{1 - i , 1}$. The value of $S_{t+1}(v)$ is determined by comparing the values of the current payoff and the alternative payoff. If the alternative payoff is strictly greater than the total payoff, then the vertex will switch to strategy $ 1-i$ in round $t+1$; otherwise it will continue with its current strategy. For a vertex $v$ at time $t,$ we can succinctly express the evolution rule as follows:

$$ S_{t+1}(v) = \begin{cases}
S_{t}(v)  & \textrm{if} \ T_{t}(v) \geq T'_{t}(v);\\
 1 - S_{t}(v)   & \textrm{if} \ T_{t}(v) < T'_{t}(v).\\
\end{cases}
$$

We wish to analyse the global evolution of the strategies as time elapses. We say that a vertex's strategy is \textit{periodic} if there exist some $T , p \in \mathbb{N},$ such that $S_{T}(v) = S_{T + kp}$ for all $k \in \mathbb{N}$. We call the least possible $p$ which satisfies this definition the \emph{period}. We say that the evolution of a system is \textit{unanimous}, if there exists some $T \in \mathbb{N}$ such that for all $t \geq T$ and all distinct pairs of vertices $u, v \in V(G),$ we have that $S_{t}(u) = S_{t}(v).$ We say that the system is \textit{stable} if there exists $T \in \mathbb{N}$ such that for all $v \in V(G)$ and $t \geq T,$ we have that $S_{T}(v) = S_{t}(v).$

\section{Main results}\label{Prelimnary_Analysis}

The evolution of the node system is largely governed by the entries present in the payoff matrix $Q.$ As a consequence of choosing specific entries for $Q,$ it can be the case that one strategy is strictly more beneficial than the other. In this case the evolution of the system is trivial, as all agents will unanimously maximise their payoff; we capture this behaviour with the notion of degeneracy. We say that a payoff matrix $Q = (q_{i,j})$ with $ i,j \in \{0,1 \}$ is \textit{non-degenerate}, if one of the following holds:

%$\text{sgn} (q_{0,0} - q_{1,0}) \not = \text{sgn} (q_{0,1} - q_{1,1})$. 
%either:
\begin{enumerate}[(i)]
    
    \item We have that $q_{0,0} > q_{1,0}$ and $q_{0,1} < q_{1,1}.$
    
    \item Or, $q_{0,0} < q_{1,0}$ and $q_{0,1} > q_{1,1}.$
    
\end{enumerate}
Otherwise, we say that $Q$ is~\emph{degenerate}. The behaviour of interacting node systems with degenerate payoff matrices will be discussed in Section~\ref{Degeneracy}.

We consider an interacting node system where the underlying graph $G$ is both random and suitably dense. We 
let $G(n,p)$ be the \textit{binomial random graph} on vertex set $V_n := [n] := \{1,2 \ldots , n \}$, where 
each edge appears independently with probability $p$. We introduce a random binomial initial state, which we refer to as $\mathcal{S}_{1/2} \in \{0 , 1\}^{[n]}.$ For 
every vertex $v \in V_{n},$ we have that  $\mathbb{P}[\mathcal{S}_{1/2}(v) = 1] =\mathbb{P}[\mathcal{S}_{1/2}(v) = 
0] = 1/2$, independently of any other vertex. 
We say that a sequence of events $E_{n}$ defined on a sequence of probability spaces with probability measure 
$\mathbb{P}_n$ occurs \emph{asymptotically almost surely} (or \emph{a.a.s.}) if 
$\mathbb{P}_{n}[E_{n}] \to 1$ as $n\to \infty$.

We now state our first result which describes the vertex strategies of 
an interacting node system on $G(n,p),$ with initial state $\mathcal{S}_{1/2}$. We denote this system by 
$(G(n,p),Q,\mathcal{S}_{1/2})$.

\begin{theorem}\label{main_result}
Let  $Q$ be a  $2 \times 2$ non-degenerate payoff matrix.  For any  $\varepsilon \in (0, 1]$ there exist positive constants $\Lambda, n_{0}$ such that for all $n \geq n_{0}$, if $p > \Lambda n^{-\frac{1}{2}},$ then with probability at least $1 - \varepsilon,$ across the product space of $G(n,p)$ and $\mathcal{S}_{1/2}$, the interacting node system $(G(n,p), Q , \mathcal{S}_{1/2})$ will be unanimous after at most four rounds.
\end{theorem}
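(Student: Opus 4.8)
The plan is to convert the best-response rule into a generalised (weighted) majority or minority rule, and then to split according to a characteristic of $Q$. Writing $S_t(v) = i$ and expanding $T_t(v) - T'_t(v) = n_t(v;0)(q_{i,0}-q_{1-i,0}) + n_t(v;1)(q_{i,1}-q_{1-i,1})$, one checks that under alternative (i) of non-degeneracy, with $a := q_{0,0}-q_{1,0} > 0$ and $b := q_{1,1}-q_{0,1} > 0$, the evolution rule becomes: $S_{t+1}(v) = 1$ if $b\,n_t(v;1) > a\,n_t(v;0)$, $S_{t+1}(v) = 0$ if $b\,n_t(v;1) < a\,n_t(v;0)$, and $S_{t+1}(v) = S_t(v)$ on equality --- a weighted \emph{majority} rule. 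Under alternative (ii) the same computation gives the complementary weighted \emph{minority} rule, with $a,b$ replaced by the corresponding positive payoff differences. When $a = b$ these are the ordinary majority and minority dynamics; when $a \ne b$ the weights are unbalanced, and at the densities in question this trivialises the evolution. Note that relabelling the strategies $0 \leftrightarrow 1$ interchanges $a$ and $b$ and commutes with the dynamics, so in the case $a\ne b$ we may assume $b > a$.

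Next I would record the standard concentration facts for $G(n,p)$ with $p > \Lambda n^{-1/2}$ and initial configuration $\mathcal{S}_{1/2}$. Since $np \gg \log n$, a Chernoff bound and a union bound over $V_n$ give that, with probability at least $1-\varepsilon/3$ for $n \ge n_0$, every vertex $v$ simultaneously has $\deg_G(v) = (1+o(1))np$ and, conditionally on its degree, $|n_0(v;1) - \tfrac12\deg_G(v)| \le C\sqrt{np\log n}$ for an absolute constant $C$ (here $n_0(v;1) \sim \mathrm{Bin}(\deg_G(v),\tfrac12)$); in particular $C\sqrt{np\log n} = o(np)$.

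Consider first the \emph{skewed} case $b > a$. On the event above, $b\,n_0(v;1) - a\,n_0(v;0) = \tfrac12\deg_G(v)(b-a) + O\!\big((a+b)\sqrt{np\log n}\big)$, whose main term is $\Theta(np)$ and whose error is $o(np)$; hence it is strictly positive at \emph{every} vertex for $n \ge n_0$. Under the majority rule this forces $S_1 \equiv 1$, and then $S_t \equiv 1$ for all $t \ge 1$ by an immediate induction; under the minority rule it forces $S_1 \equiv 0$, after which the configuration simply alternates $0,1,0,1,\dots$. In either case the system is unanimous after at most one round --- the oscillating case still satisfies the definition of unanimity, which asks only that all vertices agree at each step --- on an event of probability at least $1-\varepsilon/3$.

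The \emph{unskewed} case $a = b$ is where the real work lies: the rule is now precisely majority dynamics (alternative (i)) or minority dynamics (alternative (ii)) on $G(n,p)$ started from $\mathcal{S}_{1/2}$. For the majority rule I would invoke the rapid-stabilisation theorem of~\cite{fountoulakis2019resolution}, which gives unanimity within four rounds with probability at least $1-\varepsilon/3$ once $p \ge \Lambda n^{-1/2}$; for the minority rule one needs the analogous statement, which is exactly what Section~\ref{Unskewed_Node_Systems} establishes. This is also where the main obstacle lies: one cannot pretend the round-$1$ strategies $S_1(v)$ are independent, because at $p \gg n^{-1/2}$ neighbouring vertices share many common neighbours, so proving that the $O(n^{-1/2})$-scale imbalance present at round $0$ is amplified into a global consensus within a bounded number of rounds requires genuine second-moment and concentration estimates on the number of vertices in each state and on joint neighbourhood counts, exploiting that at this density the neighbourhoods of $G(n,p)$ are uniformly spread. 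Granting that input, the theorem follows: a non-degenerate $Q$ satisfies exactly one of (i), (ii), and is either skewed or unskewed, so choosing $\Lambda$ and $n_0$ to be the largest of the finitely many values produced above and taking a union bound over the at most two exceptional events of probability $\le \varepsilon/3$ each yields total failure probability at most $\varepsilon$.
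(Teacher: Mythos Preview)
Your proposal is correct and follows essentially the same route as the paper: the split into the skewed case $\lambda\neq 1$ (your $a\neq b$) versus the unskewed case $\lambda=1$ is exactly the paper's organisation, your one-round Chernoff argument for the skewed case is the content of Lemma~\ref{Initial Sparse} (see the remark following Theorem~\ref{Spare_skewed_Stabilistion}), and for $\lambda=1$ you correctly defer to Theorem~\ref{MajorityStable} from~\cite{fountoulakis2019resolution} in the majority regime and to the minority analogue proved in Section~\ref{Unskewed_Node_Systems}.
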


\subsection{The skew of the payoff matrix $Q$}
Suppose a vertex $v$ at time $t$ has picked a strategy $S_{t}(v) = i$. From the above discussion, we observe that the \textit{incentive} for $v$ to switch strategy is the condition that $T_{t}(v) < T'_{t}(v).$ Expanding both terms and re-arranging, we observe the condition for a vertex playing strategy $i$ to switch to strategy $ 1 - i $ is as follows:
$$n_{t}(v; 0) \left(q_{i , 0} - q_{ 1 - i, 0} \right) <  n_{t}(v; 1)\left(q_{ 1 - i , 1} - q_{i , 1}\right).$$
If $Q$ is non-degenerate, there are two possible cases: Either $q_{0,0} > q_{1,0}$ and $q_{1,1} > q_{0,1}$; or we have that $q_{0,0} < q_{1,0}$ and $q_{0,1} > q_{1,1}$.

We lead with the former case. By substituting values of $i \in \{0 , 1 \},$ we can rephrase the evolution conditions for each agent in terms of $n_{t}(v;0)$ and $n_{t}(v;1).$ We define the \textit{payoff skew} of the payoff matrix $Q$ as: 
$$\lambda = \lambda(Q) :=\left(q_{1,1} - q_{0,1}\right) / \left(q_{0,0} - q_{1,0} \right).$$ 
We remark that $\lambda(Q)$ is positive and well-defined, if and only if, $Q$ is non-degenerate. 
If $S_{t}(v) = i$, then we can write:
\begin{equation} \label{eq:majority}
S_{t+1}(v) = \begin{dcases}
 1 - i   & \textrm{if} \ n_{t}(v;i) <  \lambda^{1-2i} n_{t}\left(v; 1 - i \right) \\
 i   & \textrm{otherwise} \\ 
\end{dcases}.
\end{equation}
Suppose now that the latter case holds, that is, $ q_{0,0} < q_{1,0}$ and $q_{0,1} > q_{1,1}.$ 
Then if $S_{t}(v) = i$,
\begin{equation} \label{eq:minority} 
S_{t+1}(v) = \begin{dcases}
1 - i    & \textrm{if} \ n_{t}(v;i) >  \lambda^{1-2i} n_{t}\left(v;  1 - i \right)\\
 i   & \textrm{otherwise} \\
\end{dcases}.
\end{equation}

 We refer to the system governed by the evolution rules from \eqref{eq:majority} as the \textit{majority regime}; while we refer to the system described in \eqref{eq:minority} as the \textit{minority regime}. We will tackle each of these systems separately. The intuition for these names arises from how each individual agent tends to think of its neighbours. In the majority regime, agents will tend to  follow the strategies which are shared by the majority of their neighbours; however, in the minority regime agents will generally choose the least popular strategy seen across their neighbours. The value of $\lambda$ determines the strength of this tendency.  
 
 In game-theoretic terms, it is easy to see that payoff matrices in the majority regime give rise to two Nash equilibria which are pure strategies: namely with both players playing simultaneously strategy 1 or strategy 0. 
 On the other hand, payoff matrices in the minority regime have two Nash equilibria which are pure strategies 
 where the two players play opposite strategies. Hence, games of the former type are sometimes called \emph{coordination games}, whereas those of the latter type are called~\emph{anti-coordination games}.
 From this point of view, the parameter $\lambda$ can be seen as some form of bias between the two pure Nash equilibria.
 
 \subsubsection*{Examples} We give two examples of games that are well studied in game theory that reflect the above classification.

 \noindent
 1. \emph{The Hawk-Dove game.} In this game, a population of animals consists of two types of individuals which are differentiated by the amount of aggression they display during their interactions. There is the most aggressive type (hawk) and the least aggressive, or most cooperative, type (dove). When two of them interact over some fixed resource, 
 the outcome depends on their types. If two hawks compete over the resource they get injured, because of the fighting, and one of them (at random) manages to grab the resource. Hence, if $R$ is the gain from the resource, 
 each is expected to gain $R/2$ out of the fight, but they pay a price $P >R$ for their injuries, whereby their overall 
 gain is $(R-P)/2$. If a hawk interacts with a dove, then the hawk grabs the resource gaining $R$, whereas the dove walks away with nothing.  Finally, if two doves interact, then they share the resource each gaining $R/2$. 
Thus, if $Q_{\mathrm{H-D}}$ denotes the payoff matrix of the Hawk-and-Dove game, this is 
$$Q_{\mathrm{H-D}} = \left[ 
\begin{array}{cc}
\frac{R-P}{2} & R \\
0 & \frac{R}{2}
\end{array}
\right], $$
where first row and column correspond to the hawk strategy and the second row and column correspond to the dove strategy. 
Hence, this is a non-degenerate payoff matrix in the minority regime with 
$$\lambda (Q_{\mathrm{H-D}})= \frac{R}{P-R}.$$

\noindent
2. \emph{The Prisoners dilemma.} In this game, two individuals are arrested while committing a crime together, but they are put in different cells. The police do not have enough evidence to convict them, but they make an offer to each one separately. If one of them confesses (defects) but the other remains silent (cooperates), then the former is released but the other is sentenced to imprisonment of $P>0$ years. 
If they both confess, they are sentenced to $R>0$ years of imprisonment, for $R<P$.
Finally, if both remain silent, they are both sentenced to $S$ years imprisonment  for some minor offence, as the police do not have enough evidence. In this case, $S<R$. 
 
Thus, $0< S <R <P$. The payoff matrix of the Prisoners dilemma game $Q_{\mathrm{PD}}$ is 
$$Q_{\mathrm{PD}} = \left[ 
\begin{array}{cc} 
-R & 0 \\
-P & -S
\end{array}
\right],  $$
where the first row and column correspond to the Defect strategy and the second row and column correspond to the Cooperate strategy. (Here, we put the $-$ sign in front of these quantities, as imprisonment is thought of as a
loss.) Hence, this is a case of a degenerate payoff matrix.

Our second result concerns non-degenerate interacting node systems in a sparser regime compared to that covered in Theorem~\ref{main_result}.
In this setting, we assume that $\lambda \neq 1$ and
 we are able to consider smaller values of $p$, below the \emph{connectivity threshold} of $G(n,p)$. 
Among the first results on the theory of random graphs due to Gilbert~\cite{ar:Gilbert59} is that the connectivity of $G(n,p)$ undergoes a sharp transition.  More specifically, let $\omega : \mathbb{N} \to \mathbb{R}_+$ 
be a function such that $\omega (n) \to \infty $ as $n \to \infty$; 
if $np = \log n + \omega (n)$, then a.a.s. $G(n,p)$ is connected, 
whereas if $np = \log n - \omega (n)$, then a.a.s. $G(n,p)$ has at least one isolated vertex. Analogous results were proved 
by Erd\H{o}s and R\'enyi~\cite{ar:ErRen59} on the uniform model of random graphs with a given number of edges.

In the subcritical regime, we cannot hope for unanimity. For $1 \ll np< \log n$, all connected components
of $G(n,p)$ besides the largest one are trees of order at most $\log n$ and, in fact, many of them are isolated 
vertices
(see for example~\cite{bk:BollobasRG} for a complete and precise description of the structure of $G(n,p)$).  
\begin{theorem}\label{Spare_skewed_Stabilistion}
Let  $p =d/ n \leq 1$, where $d \gg 1$, 
and let $Q$ be a $2 \times 2$ non-degenerate payoff matrix. Suppose that $(G(n,p), Q, \mathcal{S}_{1/2})$ is an interacting node system with payoff skew $\lambda \neq 1$. For any $\eps >0$ there exists $\beta = \beta (\lambda, \eps)>0$ such that 
a.a.s. at least $n(1-\eps )$ vertices in $G(n,d/n)$ will be unanimous after at most $\beta \log n$ rounds. 

Moreover, there exists a constant $ \alpha(\lambda)>1$ such that if $d> \alpha(\lambda) \log n$, then a.a.s. 
$G(n,d/n)$ will be unanimous after one round. 
\end{theorem}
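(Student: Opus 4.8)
Throughout write $\delta := \bigl|\tfrac12 - \tfrac{1}{1+\lambda}\bigr| = \tfrac{|\lambda-1|}{2(1+\lambda)}$, a constant that is positive \emph{exactly} because $\lambda\neq1$ (and $\delta<\tfrac12$). By \eqref{eq:majority}--\eqref{eq:minority}, if $v$ has degree $\Delta$ and $n_t(v;1)=k$, then $S_{t+1}(v)$ is decided by whether $k$ exceeds the threshold $\Delta/(1+\lambda)$ (ties broken by the current strategy): in the majority regime $v$ moves toward the side of the threshold containing $k$, in the minority regime away from it. Since $\lambda\neq1$ pulls this threshold away from $\Delta/2$, there is a well-defined \emph{target} $s^\ast\in\{0,1\}$ -- the strategy a vertex adopts when its neighbourhood is roughly balanced -- and a constant $c=c(\lambda)\in(0,1)$ such that: a vertex with at most a $c$-fraction of neighbours playing $1-s^\ast$ plays $s^\ast$ at the next step in the majority regime, and flips to $1-s^\ast$ in the minority regime. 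Accordingly put $s^\ast_t=s^\ast$ for all $t$ in the majority regime, and in the minority regime let $s^\ast_t$ equal $s^\ast$ for odd $t$ and $1-s^\ast$ for even $t$; define the \emph{discordant set} $B_t:=\{v:S_t(v)\neq s^\ast_t\}$ for $t\ge1$. We use one elementary property of $G=G(n,d/n)$ with $d\gg1$: a.a.s.\ all but $\mu_0(d)n$ vertices have degree at least $d/2$, with $\mu_0(d)\to0$.

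\emph{The ``one round'' assertion.} Condition on $G$. For each $v$, $n_0(v;1)\sim\mathrm{Bin}(\deg_G(v),\tfrac12)$, and since $\deg_G(v)/(1+\lambda)$ differs from the mean $\deg_G(v)/2$ by $\delta\deg_G(v)$, a Chernoff bound gives $\mathbb P[S_1(v)\neq s^\ast]\le 2e^{-2\delta^2\deg_G(v)}$. When $d>\alpha(\lambda)\log n$ every degree is $(1+o(1))d$ a.a.s., so this is at most $2\,n^{-2\delta^2\alpha(\lambda)(1+o(1))}$; taking $\alpha(\lambda)>\max\{1,\,1/(2\delta^2)\}$ and a union bound over the $n$ vertices yield that a.a.s.\ $S_1(v)=s^\ast$ for \emph{all} $v$. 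In the majority regime this state is a fixed point; in the minority regime it has period two (no vertex is isolated when $d>\log n$, so every vertex flips each round). Either way the system is unanimous from round $1$ on, which proves the second statement.

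\emph{The ``$\beta\log n$'' assertion.} \textbf{Step 1 (the discordant set is small after round 1).} As above $\mathbb E[\,|B_1|\mid G\,]\le 2\sum_v e^{-2\delta^2\deg_G(v)}$, and $\mathbb E_G\bigl[\sum_v e^{-2\delta^2\deg_G(v)}\bigr]=(1+o(1))n\,e^{-(1-e^{-2\delta^2})d}=o(n)$ since $d\to\infty$; hence a.a.s.\ $\mathbb E[\,|B_1|\mid G\,]=o(n)$, and then by Markov over $\mathcal S_{1/2}$ a.a.s.\ $|B_1|\le\eps_1(d)\,n$ with $\eps_1(d)\to0$. \textbf{Step 2 (contraction).} By the choice of $s^\ast,c$ above, a vertex disagrees with $s^\ast_{t+1}$ at time $t+1$ only if at least a $c$-fraction of its neighbourhood lay in $B_t$:
\begin{equation}\label{eq:contract}
B_{t+1}\ \subseteq\ \bigl\{\,v:\ |N_G(v)\cap B_t|\ \ge\ c\,\deg_G(v)\,\bigr\}\qquad(t\ge1).
\end{equation}
\textbf{Step 3 (expansion of $G(n,d/n)$).} There are constants $\theta=\theta(\lambda)>0$ and $\mu=\mu(d)\to0$ such that a.a.s.\ every $B$ with $|B|\le\theta n$ has $\bigl|\{v:|N_G(v)\cap B|\ge c\deg_G(v)\}\bigr|\le\tfrac12|B|+\mu n$. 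Indeed, a vertex of degree $\ge d/2$ enters the left-hand set only through a large deviation of $\mathrm{Bin}(|B|,d/n)$ above $cd/2$, of probability at most $(2e|B|/(cn))^{cd/2}$, while the $o(n)$ vertices of degree $<d/2$ are absorbed in the $\mu n$ term; since $d\to\infty$ this per-vertex bound dominates the entropy factor $\binom{n}{|B|}\binom{n}{\lceil|B|/2\rceil}$ uniformly for $|B|\le\theta n$ once $\theta$ is small, and the union bound closes. \textbf{Step 4 (conclusion).} Put $\mathcal B:=\bigcup_{t\ge1}B_t$. A vertex of $\mathcal B$ is either in $B_1$ or first enters some $B_{t+1}$, whence \eqref{eq:contract} forces $|N_G(v)\cap\mathcal B|\ge c\deg_G(v)$; so $\mathcal B\subseteq B_1\cup\{v:|N_G(v)\cap\mathcal B|\ge c\deg_G(v)\}$. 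Because $|B_1|\le\eps_1(d)n<\tfrac14\min\{\theta,\eps\}\,n$ for $n$ large, a short bootstrap (the partial unions $\bigcup_{s\le t}B_s$ never reach size $\theta n$, the range where Step 3 is valid) gives $|\mathcal B|\le 2|B_1|+2\mu n<\eps n$. Hence $W:=V\setminus\mathcal B$ satisfies $|W|\ge(1-\eps)n$ and every vertex of $W$ plays $s^\ast_t$ at every $t\ge1$, so $W$ is unanimous from round $1$ on -- a fortiori within $\beta\log n$ rounds for any $\beta>0$. (Iterating \eqref{eq:contract} with Step 3 as $|B_{t+1}|\le\tfrac12|B_t|+\mu n$ even shows $B_t$ has reached its eventual value after $O_{\lambda,\eps}(\log n)$ rounds.)

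\emph{Main obstacle.} The real work is Step 3 together with the bootstrap in Step 4: one must make the robustness estimate hold \emph{uniformly over all} candidate discordant sets $B$ up to size $\theta n$ -- including tiny $B$, $B$ meeting the low-degree vertices, and $B$ lying inside the small tree components -- and then verify that, started from $B_1$, the accumulating union $\mathcal B$ cannot escape past the threshold $\theta n$ beyond which the estimate fails. Making the constants $c(\lambda)$, $\theta(\lambda)$ and the slack $\mu(d)$ compatible, and carrying the minority regime through the parity-shifted targets $s^\ast_t$ (where the constant in \eqref{eq:contract} alternates between two positive values, so one takes the smaller), are the delicate points.
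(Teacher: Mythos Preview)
Your argument is correct and shares the paper's two-stage plan: the skew $\lambda\neq1$ drives almost every vertex to a target state after one round (your Step~1 and the ``one round'' assertion match Lemma~\ref{Initial Sparse} essentially verbatim), and then an expansion property of $G(n,d/n)$ forces the discordant set to stay small. The second stage is packaged differently, however. The paper fixes a constant cutoff $C$, restricts to $H_n(C)=\{v:\deg(v)\ge C\}$, and proves a contraction lemma (Lemma~\ref{Sparse_Algorithm}) showing the minority inside $H_n(C)$ shrinks by a fixed factor each round, so after $O(\log n)$ iterations it reaches \emph{zero}; a separate claim---conditioned on the event $\mathcal D_n$ that no $\ell_\lambda+2$ vertices of $L_n(C)$ share a neighbour---then pins $H_n(C)$ at unanimity. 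You instead keep all vertices, absorb those of degree $<d/2$ into the additive slack $\mu n$ in Step~3, and bootstrap the accumulated set $\mathcal B=\bigcup_{t\ge1}B_t$ directly to size below $\eps n$. This buys you two things: your expansion estimate is uniform over all $d\gg1$ (the paper's $\mathcal D_n$ is only established for $d$ of order $\log n$, so its proof of Lemma~\ref{Sparse_Algorithm} as written does not cover the full range $1\ll d\ll\log n$), and your set $W=V\setminus\mathcal B$ is unanimous already from round~$1$, making the $\beta\log n$ bound trivially satisfied. What the paper's route buys in return is the exact identification of $H_n(C)$ as an eventually-unanimous core with zero dissenters---this is what feeds into the sharper threshold Theorems~\ref{thm:Skewed_Majority} and~\ref{thm:Skewed_Minority}, which your global $\eps n$ estimate would not yield.
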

The part of the above theorem for $np> c \log n$ with $c > \alpha(\lambda)$ implies and, in fact, it is much stronger than Theorem~\ref{main_result}. That is, for $\lambda \not = 1$ the system goes into unanimity in one round for 
much lower densities than $n^{-1/2}$. 
Thus, it will  remain to prove Theorem ~\ref{main_result} only for $\lambda = 1$. 

We refine the above theorem focusing on the largest connected component of $G(n,p)$ 
 which we denote as $L_1 (G(n,p))$ (formally, if there are at least two, we take the lexicographically smallest one).  
Let $u_n^{(1)}$ be the probability that $L_1(G(n,d/n))$ will eventually become unanimous. 
The next two theorems give the precise location of the threshold on $p$ above which $u_n^{(1)}$ approaches 1. 
In fact, there are two different thresholds for the two regimes. We start with the majority regime. 
For $\lambda > 0$, let 
$$\ell_\lambda := \lceil \max \{\lambda, \lambda^{-1} \} \rceil \ \mbox{and} \ 
c_\lambda := \frac{1}{\ell_\lambda +1}. $$ 
(For a real number $x>0$, we let $\lceil x \rceil = x$, if $x\in \mathbb{N}$, and $\lceil x\rceil = \lfloor x \rfloor +1$, otherwise.)
We write $\mathrm{Po}(\gamma)$ for the Poisson distribution with parameter $\gamma> 0$. 
\begin{theorem} \label{thm:Skewed_Majority}
Suppose that $\lambda \not =1$ and $d=c_\lambda \log n + \log \log n + \omega (n)$. 
Then the following hold in the majority regime.
\begin{enumerate}
\item If  $\omega (n) \to +\infty$ as $n\to +\infty$, then 
$$\lim_{n\to +\infty} u_n^{(1)} = 1.$$
\item If $\omega (n) \to c \in \mathbb{R}$ as $n\to +\infty$, then
$$\limsup_{n\to +\infty} u_n^{(1)} \leq \sum_{k=0}^{\infty} \left(1- \frac{1}{2^{\ell_\lambda+1}}\right)^k
\mathbb{P} \left(\mathrm{Po} (e^{c(\ell_\lambda +1)}/ \ell_\lambda!) = k \right).$$
and 
$$\liminf_{n\to +\infty} u_n^{(1)} \geq \sum_{k=0}^{\infty}  \left(\frac{1}{2^{\ell_\lambda+1}}\right)^k
\mathbb{P} \left(\mathrm{Po} (e^{c(\ell_\lambda +1)}/ \ell_\lambda!) = k \right). $$
\item If $\omega (n) \to - \infty$ as $n \to +\infty$, then 
$$\lim_{n\to +\infty} u_n^{(1)} = 0.$$
\end{enumerate}
\end{theorem}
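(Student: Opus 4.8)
The plan is to reduce \eqref{eq:majority} to a parallel linear threshold rule, isolate the unique critical obstruction to unanimity inside $L_1$, and then read off the three regimes from a first/second moment analysis of that obstruction together with a Poisson approximation.

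\textbf{Step 1: reduction and the critical obstruction.} Expanding the condition in \eqref{eq:majority} and writing $\tau:=1/(\lambda+1)$, one checks that a vertex $v$ plays $1$ at step $t+1$ iff $n_t(v;1)>\tau\deg_{G}(v)$, plays $0$ iff $n_t(v;1)<\tau\deg_{G}(v)$, and otherwise keeps its strategy; so the majority regime is exactly parallel threshold dynamics with threshold $\tau$ and ``ties stick''. Since both $\mathcal S_{1/2}$ and the dynamics are invariant under simultaneously swapping $0\leftrightarrow 1$ and $\lambda\leftrightarrow\lambda^{-1}$, I may assume $\lambda>1$, so $\tau<1/2$ and strategy $1$ is favoured. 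Call a set $W$ \emph{frozen at $0$} if every $v\in W$ has at most $\tau\deg_{G}(v)$ neighbours outside $W$; by the rule, once $W$ is all at $0$ and $V\setminus W$ all at $1$ this persists forever. A vertex of a frozen set with at least one neighbour outside $W$ must have at least $\lceil\lambda\rceil=\ell_\lambda$ neighbours inside $W$ (this is where $\ell_\lambda\ge\lambda$ and the tie rule enter), so any frozen set attached to $L_1$ has at least $\ell_\lambda+1$ vertices, and the minimal one is a \emph{bad star}: a vertex $v_0$ of degree $\ell_\lambda+1$ having $\ell_\lambda$ leaf neighbours, with the remaining edge of $v_0$ joining it to the giant; structures on $\ell_\lambda+1$ vertices with any extra edge, and all larger/denser frozen sets, are strictly rarer.

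\textbf{Step 2: no obstruction implies unanimity.} From the balanced start, Theorem~\ref{Spare_skewed_Stabilistion} gives that after $O(\log n)$ rounds all but $\eps n$ vertices agree on strategy $1$; iterating the threshold rule, a vertex in the eventual disagreement set $B$ has a $(1-\tau)$-fraction of its neighbours in $B$, so $B$ is $(1-\tau)$-internally dense. A first moment bound (of the same kind used for bad stars) shows that for $d=\Theta(\log n)$, a.a.s.\ $G(n,d/n)$ has no $(1-\tau)$-internally dense set of size between a large constant and $\eps n$; hence $B$ splits into bounded-size frozen sets attached to $L_1$, plus vertices outside $L_1$. A nearly identical, strictly easier first moment bound rules out persistent period-$2$ discordance (the relevant structures are denser still). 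Consequently, on the event that $L_1$ contains no frozen set (at $0$ or, symmetrically, at $1$) of size $\le C$ and that the cascade of Theorem~\ref{Spare_skewed_Stabilistion} reaches the $1-\eps$ level, $L_1$ is unanimous.

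\textbf{Step 3: the three regimes.} Let $K_n$ be the number of bad stars inside $L_1$. A routine first moment computation, calibrated exactly by $d=c_\lambda\log n+\log\log n+\omega(n)$ with $(\ell_\lambda+1)c_\lambda=1$, gives $\mathbb E[K_n]\to 0$, to the Poisson parameter of part~(2), or to $\infty$ according as $\omega(n)\to+\infty$, $\to c$, or $\to-\infty$ (the $1/\ell_\lambda!$ being the symmetry of the $\ell_\lambda$ leaves), and the same bounds show all other obstructions have expectation $\to 0$. For (1): $\mathbb E[K_n]\to 0$, so a.a.s.\ there is no frozen set inside $L_1$, and Step~2 gives $u_n^{(1)}\to 1$. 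For (2): bad stars are local and a.a.s.\ pairwise far apart, and ``$v_0\in L_1$'' decouples from the radius-$2$ neighbourhood of $v_0$, so a Chen--Stein argument yields $K_n\to\mathrm{Po}(\gamma_c)$ in distribution with $\gamma_c$ the stated parameter. Since $\mathcal S_{1/2}$ is independent of the graph, conditionally each bad star survives (stays discordant forever) with a probability $\pi$ satisfying $2^{-(\ell_\lambda+1)}-o(1)\le\pi\le 1-2^{-(\ell_\lambda+1)}$: if all $\ell_\lambda+1$ of its vertices start at $0$ it is frozen at $0$ from round $1$ on and survives once the bulk reaches $1$, while if they all start at $1$ it is absorbed into the bulk after round $1$. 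Surviving events for distinct bad stars are asymptotically independent, so $u_n^{(1)}=\mathbb P[\text{no bad star survives}]+o(1)=\mathbb E[(1-\pi)^{K_n}]+o(1)$, and $1-\pi\in[2^{-(\ell_\lambda+1)},1-2^{-(\ell_\lambda+1)}+o(1)]$ together with $K_n\to\mathrm{Po}(\gamma_c)$ gives exactly the two-sided bound of part~(2). For (3): $\mathbb E[K_n]\to\infty$, a second moment estimate gives $K_n\to\infty$ a.a.s., and since each of the $\to\infty$ many (asymptotically independent) bad stars survives with probability $\ge 2^{-(\ell_\lambda+1)}-o(1)$, a.a.s.\ at least one survives, so $u_n^{(1)}\to 0$.

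\textbf{Main obstacle.} The delicate case is $\omega(n)\to c$. The hard points are: (i) proving that bad stars are the \emph{only} obstruction that matters, which requires the precise scaling in the hypothesis so that all larger/denser frozen sets and all period-$2$ structures have expectation $\to 0$ while bad stars have expectation $\Theta(1)$; (ii) the decoupling underpinning both the Chen--Stein step and Step~2, in particular that conditioning on a bad star barely perturbs the giant and the cascade, so that ``the bulk reaches consensus on $1$'' can be applied uniformly over the $O(1)$ local configurations near each bad star; and (iii) controlling the joint law of the graph (which produces $K_n$) and the independent colouring (which decides survival) sharply enough to obtain both the $\liminf$ and the $\limsup$. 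The remaining ingredients --- the moment computations, the expansion input of Step~2, and the threshold-dynamics bookkeeping --- are routine.
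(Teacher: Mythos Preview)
Your proposal is correct and follows essentially the same strategy as the paper: both identify the $(\ell_\lambda,1)$-blocking star (your ``bad star'') as the unique critical obstruction, both prove a first/second-moment and Poisson limit for its count in $L_1$ at precisely the scaling $d=c_\lambda\log n+\log\log n+\omega(n)$, and both derive the two-sided bound in part~(2) from the fact that a bad star whose centre-and-leaves are all initially $1-i^*$ is permanently frozen while one that is all initially $i^*$ is permanently synchronized with the bulk.

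The one place where you and the paper genuinely diverge is Step~2 (``no obstruction $\Rightarrow$ unanimity of $L_1$''). The paper does \emph{not} argue via internally-dense sets. Instead it splits $V_n$ into $H_n(C)=\{v:\deg(v)\ge C\}$ and $L_n(C)$, proves directly (Lemma~\ref{Sparse_Algorithm}, which underlies the proof of Theorem~\ref{Spare_skewed_Stabilistion}) that the minority inside $H_n(C)$ shrinks by a constant factor each round until $H_n(C)$ is \emph{fully} unanimous, and then uses three structural lemmas (Lemmas~\ref{lem:distances_Ln}--\ref{lem:light_trees}) to show that the subgraph induced on $L_n(C)$ consists a.a.s.\ of trees of order at most $\ell_\lambda+1$; a short local argument (Claim~\ref{clm:local_obstruction_maj}) then absorbs each such tree into the $H_n(C)$-consensus unless it is exactly an $(\ell_\lambda,1)$-blocking star. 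Your route --- invoke Theorem~\ref{Spare_skewed_Stabilistion} only as a black box giving $(1-\eps)n$ agreement, then use expansion to forbid $(1-\tau)$-internally-dense sets of intermediate size --- is a legitimate alternative and morally lighter, but you should be aware that the expansion bound as you state it does not by itself control sets that consist entirely of low-degree vertices (a set of $100$ vertices each of degree $\le 100$ is cheaply internally dense), so you still need something like the paper's low-degree structure lemmas to reduce the bounded-size case to bad stars. In practice your ``first moment bound of the same kind used for bad stars'' and the paper's Lemmas~\ref{lem:light_comp}--\ref{lem:light_trees} are the same computation; the paper just makes the high/low split explicit so that no period-$2$ analysis is ever needed on $H_n(C)$.
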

The analogous result for the minority regime is as follows. For $\lambda >0$, we let 
$$ \ell_\lambda' = \lfloor \max \{\lambda, \lambda^{-1} \}\rfloor. $$
\begin{theorem} \label{thm:Skewed_Minority}
Suppose that $\lambda \not =1$ and $d=\frac{1}{2} \log n + \frac{1+\ell_\lambda'}{2}\log \log n + \omega (n)$. 
Then the following hold in the minority regime.
\begin{enumerate}
\item If  $\omega (n) \to +\infty$ as $n\to +\infty$, then 
$$\lim_{n\to +\infty} u_n^{(1)} = 1.$$
\item If $\omega (n) \to c \in \mathbb{R}$ as $n\to +\infty$, then
$$\limsup_{n\to +\infty} u_n^{(1)} \leq \sum_{k=0}^{\infty} \left(\frac{3}{4}\right)^k
\mathbb{P} \left(\mathrm{Po} (e^{2c}/ \ell_\lambda'!) = k \right).$$
and 
$$\liminf_{n\to +\infty} u_n^{(1)} \geq %\sum_{k=0}^{\infty}  \left(\frac{1}{4}\right)^k
\mathbb{P} \left(\mathrm{Po} (e^{2c}/ \ell_\lambda'!) = 0 \right). $$
\item If $\omega (n) \to - \infty$ as $n \to +\infty$, then 
$$\lim_{n\to +\infty} u_n^{(1)} = 0.$$
\end{enumerate}
\end{theorem}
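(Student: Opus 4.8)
The plan is to analyze the minority regime via a second-moment/Poisson-approximation argument for the \emph{obstructions} to unanimity in $L_1(G(n,d/n))$. First I would recall, from the earlier analysis of the minority regime in Section~\ref{Skewed_Node_Systems}, the precise characterization of which local substructures remain discordant forever: since a vertex in the minority regime switches when $n_t(v;S_t(v)) > \lambda^{1-2i} n_t(v;1-S_t(v))$, a vertex of low degree relative to its neighbourhood composition can be ``pinned'' into never agreeing with the bulk. Concretely, I expect the relevant obstruction to be a vertex $v$ whose degree is at most $\ell_\lambda' = \lfloor \max\{\lambda,\lambda^{-1}\}\rfloor$ (together with a matching condition on how its neighbours behave once the rest of the component has stabilized); such a vertex will oscillate or disagree and prevent unanimity of $L_1$. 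So the proof reduces to the classical threshold question: for $d = \tfrac12 \log n + \tfrac{1+\ell_\lambda'}{2}\log\log n + \omega(n)$, when does $G(n,d/n)$ contain (inside $L_1$) a vertex of degree $\le \ell_\lambda'$ with the required neighbourhood pattern?

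The key steps, in order: (1) Justify, using the structural results from the minority-regime analysis already established in the paper, that a.a.s.\ unanimity of $L_1$ holds if and only if $L_1$ contains no vertex from a certain finite list of ``bad'' local configurations, and that all such configurations are supported on a vertex of degree at most $\ell_\lambda'$; bound the contribution of higher-order obstructions (longer discordant paths/cycles) and of configurations outside $L_1$, showing they vanish a.a.s.\ in this density range. (2) Compute the expected number $X_n$ of bad vertices: a vertex has degree exactly $r \le \ell_\lambda'$ with probability $\binom{n-1}{r} p^r (1-p)^{n-1-r} \sim \frac{(np)^r}{r!} e^{-np}$, and the dominant term is $r = \ell_\lambda'$ (the largest admissible degree), giving $\mathbb{E} X_n \sim n \cdot \frac{d^{\ell_\lambda'}}{\ell_\lambda'!} e^{-d} \cdot \rho$, where $\rho$ is the (constant) conditional probability that the neighbourhood pattern is of the bad type; substituting $d$ shows $\mathbb{E} X_n \to \rho' e^{2c}/\ell_\lambda'!$ when $\omega(n) \to c$, $\to \infty$ when $\omega(n) \to +\infty$, and $\to 0$ when $\omega(n) \to -\infty$. (3) Run first and second moment methods: if $\mathbb{E}X_n \to 0$ then $\mathbb{P}(X_n \ge 1) \to 0$, yielding $u_n^{(1)} \to 1$ (part (1)); if $\omega(n) \to -\infty$, show $X_n \to \infty$ in probability with $\mathrm{Var}\,X_n = o((\mathbb{E}X_n)^2)$ — bad vertices are ``almost independent'' since two low-degree vertices rarely interact — so $X_n \ge 1$ a.a.s., giving $u_n^{(1)} \to 0$ (part (3)). (4) For part (2), establish Poisson convergence of $X_n$ to $\mathrm{Po}(\gamma)$ with $\gamma = e^{2c}/\ell_\lambda'!$ (after absorbing the pattern probability, which I expect cleans up to exactly this by the chosen normalization of $d$) via the method of moments or Chen–Stein, so $\mathbb{P}(X_n = 0) \to e^{-\gamma}$; then the two-sided bound on $u_n^{(1)}$ comes from: conditionally on the obstruction pattern occurring at $k$ vertices, each independently fails to spoil unanimity with probability in $[\,0,\,\tfrac34\,]$ — the upper constant $\tfrac34$ arises because, given the bad local pattern around a degree-$\ell_\lambda'$ vertex, there is still at least a $\tfrac14$-probability (over $\mathcal{S}_{1/2}$ and the stabilized configuration of the rest) that the vertex ends up discordant — while the lower bound uses that the event ``$X_n = 0$'' already forces unanimity.

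The main obstacle I anticipate is step (1): cleanly proving that the \emph{only} obstructions to unanimity of $L_1$ are these low-degree vertices, i.e.\ ruling out more exotic periodic or frozen substructures (e.g.\ short discordant cycles, or pairs of adjacent low-degree vertices stabilizing each other into a 2-coloring) with a nonnegligible probability in the narrow window $d \approx \tfrac12\log n$. This requires combining a careful deterministic analysis of the minority evolution rule on sparse local neighbourhoods (to enumerate exactly the stable/periodic local patterns) with sharp first-moment estimates on each such pattern's expected count; the bookkeeping for the neighbourhood pattern probability $\rho$ and its interaction with the $\log\log n$ correction term in $d$ is delicate, since getting the exponent of $\log\log n$ wrong by even a little would push $\mathbb{E}X_n$ to $0$ or $\infty$ and destroy the threshold. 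A secondary technical point is handling the conditioning on ``$v \in L_1$'': since $d \gg 1$, a.a.s.\ the giant component misses only $o(n)$ vertices and a fixed low-degree vertex lies in $L_1$ with probability $1-o(1)$, but one must check this conditioning does not disturb the Poisson limit, which follows from standard exploration/sprinkling arguments.
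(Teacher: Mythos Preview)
Your overall architecture (identify the minimal local obstruction, compute its first and factorial moments, deduce a Poisson limit) matches the paper's, but you have misidentified the obstruction, and this breaks the threshold computation.

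You propose that the bad configuration is a single vertex of degree at most $\ell_\lambda'$, and compute the expected count as $n\cdot \tfrac{d^{\ell_\lambda'}}{\ell_\lambda'!}e^{-d}\cdot\rho$. But this quantity has critical scale $d\sim \log n$, not $d\sim \tfrac12\log n$, so it cannot produce the threshold in the theorem statement. The reason is dynamical: once $H_n(C)$ has settled into its period-2 oscillation, an isolated low-degree vertex with \emph{all} its neighbours in $H_n(C)$ simply synchronises with them (check \eqref{eq:minority}: if every neighbour plays the same strategy, the vertex flips). Such a vertex is therefore not an obstruction at all.

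The actual obstruction, as established in Claims~\ref{clm:1-k_sub} and~\ref{clm:1-k_sup}, is a $(1,k)$-blocking star with $k\le \ell_\lambda'$: a centre $v$ of degree exactly $1+k$ one of whose neighbours $v_1$ is a \emph{pendant} (degree-$1$) vertex. If $(v_1,v)$ is set to the $(i^*,1-i^*)$-configuration, the pair freezes forever regardless of what the $k\le\ell_\lambda'$ connectors do. The expected number of such structures with $k=\ell_\lambda'$ is, by Lemma~\ref{lem:X_ell}, asymptotic to $n\,\tfrac{d^{1+\ell_\lambda'}}{\ell_\lambda'!}e^{-2d}$: the factor $e^{-2d}$ (not $e^{-d}$) arises because \emph{two} vertices are required to have constrained degree (the pendant and the centre), and it is precisely this that puts the threshold at $\tfrac12\log n$ with a $\tfrac{1+\ell_\lambda'}{2}\log\log n$ correction. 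The constant $\tfrac34$ in part~(2) is then transparent: the two-vertex configuration $(v_1,v)$ avoids the fatal $(i^*,1-i^*)$ pattern with probability $1-\tfrac{1}{4}$. Your step~(2) as written will not recover any of this; you need to go back to step~(1), carry out the deterministic case analysis of the minority rule on the small components of $G^{(L)}(n,d/n)$ (which the paper shows a.a.s.\ have order at most $2$ in this range, via Lemma~\ref{lem:light_comp} with $\ell=1$), and discover that the pendant-plus-centre pair is the right object to count.
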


%The following theorem shows that if $p$ is not much larger than in the above theorem, an interacting node system 
%with payoff skew different from 1 reaches unanimity instantly.  
%\begin{theorem}\label{skewed_stabilisation}
%Let $p$ be such that $np = \omega\left(\log^2 n\right)$ and suppose that $\left(G(n,p) , Q , \mathcal{S}_{1/2} \right)$ is an interacting node system with payoff skew $\lambda \neq 1$. Then a.a.s. the system is unanimous for all $t \geq 1.$  
%\end{theorem}

The reason for the existence of two different thresholds is that the structures that block unanimity are different 
in the two regimes. Effectively, these are the thresholds for their disappearance as subgraphs of $G(n,p)$ and, 
as it turns out, as subgraphs of $L_1(G(n,p))$.
Nevertheless, as our arguments show, in both regimes the unanimity of $L_1(G(n,p))$ is achieved in $O(\log n)$ 
steps.

During our analysis, we will see that there are two different kinds of unanimity depending on whether the payoff matrix is in the majority or in the minority regime.  
In the majority regime, all vertices of $L_1(G(n,p))$ stabilise to one of the two strategies. However, 
in the minority regime, the vertices of $L_1 (G(n,p))$ arrive at unanimity but they fluctuate incessantly between the two strategies.  In other words, in the majority regime the subsystem of $L_1(G(n,p))$ becomes periodic with period 1, whereas in the minority regime the period is equal to 2. 
Furthermore, we identify the strategy that is played at each step once this subsystem has entered the periodic cycle.

We close this introductory section with a brief discussion on the case of degenerate payoff matrices. 

\subsection{Degenerate payoff matrices}\label{Degeneracy} If $Q$ is degenerate, then the behaviour of the interacting node system is readily deduced. The system will either reach stability from the outset, or it will reach  stable unanimity after one round. We summarise this behaviour in the following lemma. 

\begin{lemma}\label{Degenerate_System}
Let $Q$ be a degenerate payoff matrix, $G$ a connected graph, and $\mathcal{S}$ an initial configuration of vertex strategies. Then the interacting node system $(G,Q,\mathcal{S}),$ evolves as follows: Either the system is stable from $T = 0,$ or the system is unanimous and stable from $T = 1$.  
\end{lemma}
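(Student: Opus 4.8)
The plan is to enumerate the ways in which a $2\times 2$ payoff matrix $Q$ can fail the non-degeneracy conditions (i) and (ii), and in each case read off the evolution rule directly from the switching condition
$$n_{t}(v; 0) \left(q_{0,0} - q_{1,0} \right) < n_{t}(v; 1)\left(q_{1,1} - q_{0,1}\right)$$
for a vertex playing strategy $0$, and the symmetric inequality
$$n_{t}(v; 0) \left(q_{1,0} - q_{0,0} \right) < n_{t}(v; 1)\left(q_{0,1} - q_{1,1}\right)$$
for a vertex playing strategy $1$. Writing $a := q_{0,0} - q_{1,0}$ and $b := q_{1,1} - q_{0,1}$, non-degeneracy says exactly that $a$ and $b$ have the same strict sign (both positive in (i), both negative in (ii)); degeneracy therefore means $a \leq 0$ or $b \leq 0$, together with not both being strictly negative — equivalently $a b \leq 0$, or $a = b = 0$, or ($a<0$ and $b<0$ is excluded so) more precisely: at least one of $a,b$ is $\leq 0$ and they are not both $<0$. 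I would simply split into the cases $a = b = 0$; $a = 0, b\neq 0$; $b = 0, a \neq 0$; and $a, b$ nonzero of opposite sign (one positive, one negative).

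First I would dispatch the trivial sub-case $a = b = 0$, i.e. $q_{0,0}=q_{1,0}$ and $q_{1,1}=q_{0,1}$: here $T_t(v) = T'_t(v)$ for every $v$ at every step, so no vertex ever switches and the system is stable from $T=0$. Next, in the remaining degenerate cases I claim one strategy is (weakly) dominant and strictly better somewhere. Concretely, if $a \leq 0$ and $b \leq 0$ with not both zero — wait, this is excluded; the genuine degenerate cases with a nonzero entry are: $a<0, b\geq 0$ excluded too. Let me instead organize by which pure strategy dominates. Degeneracy (non-(i) and non-(ii)) with $(a,b)\neq(0,0)$ forces that one column of $Q$ weakly dominates the other and strictly dominates in at least one coordinate: either $q_{0,0}\le q_{1,0}$ and $q_{0,1}\le q_{1,1}$ with one inequality strict (strategy $1$ dominant), or $q_{0,0}\ge q_{1,0}$ and $q_{0,1}\ge q_{1,1}$ with one strict (strategy $0$ dominant). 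I would verify this is exactly the complement of (i) $\cup$ (ii) $\cup \{a=b=0\}$ by a short sign-chase. Say strategy $1$ is dominant. Then for any vertex $v$ with at least one neighbour, $T'_t(v) - T_t(v) = n_t(v;0)(q_{1,0}-q_{0,0}) + n_t(v;1)(q_{1,1}-q_{0,1}) \geq 0$, with equality only if every neighbour that contributes sits on a coordinate where the inequality is an equality; but since $G$ is connected and has more than one vertex (if $|V(G)|=1$ the claim is vacuous — or I handle it as immediately stable), every vertex has a neighbour, so a vertex playing $0$ at time $t$ has $T'_t(v) > T_t(v)$ unless all its neighbours play the coordinate where $q$-values agree. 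To avoid fuss I would argue: for a vertex currently playing $0$, switching to $1$ never strictly decreases payoff, so the rule as stated ("switch iff strictly greater") keeps it at $0$ only if it is indifferent; in any case, once a vertex plays $1$ it computes $T'_t(v)-T_t(v) = n_t(v;0)(q_{0,0}-q_{1,0}) + n_t(v;1)(q_{0,1}-q_{1,1}) \le 0$, so it never switches away from $1$. Hence strategy $1$ is absorbing.

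The slightly delicate point — and the one I expect to be the main obstacle — is the "at most one round" claim: I must show that at step $1$ every vertex is already playing the dominant strategy. The worry is a vertex $v$ playing $0$ at time $0$ all of whose neighbours play $0$ and with the relevant $q$-entries equal on the $0$-column: e.g. if $q_{0,0}=q_{1,0}$ (so $a=0$) but $q_{0,1}<q_{1,1}$ (so strategy $1$ dominant), such a $v$ has $T'_0(v)=T_0(v)$ and does not switch at step $1$. So the statement as written — "unanimous and stable from $T=1$" in the non-trivial case — needs the hypothesis that we are not in this boundary situation, OR one argues that in this situation the system is actually stable from $T=0$ (all vertices in a monochromatic-$0$ component that never feel pressure). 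I would resolve this by noting that the cases $a=0$ (resp. $b=0$) collapse: if $a=0$ and $b>0$, the switching condition for a $0$-player is $0 < n_t(v;1)\,b$, i.e. it switches iff it has at least one neighbour playing $1$; and for a $1$-player the condition $0 < n_t(v;1)(q_{0,1}-q_{1,1}) = -n_t(v;1)b \le 0$ is never met, so $1$ is absorbing and moreover this is precisely bootstrap percolation with threshold $1$ on a connected graph, which reaches all-$1$ in finitely many rounds once any vertex plays $1$ — not necessarily one round. This shows the clean "one round" conclusion really holds only when both $a\neq 0$ and $b\neq 0$ (opposite signs), where a single round of simultaneous best-response drives every vertex to the dominant strategy regardless of neighbours' states, because then for a $0$-player $T'_0(v)-T_0(v)=-n_0(v;0)a - n_0(v;1)b$; hmm, if $a>0,b<0$ this can be negative — so actually in the opposite-sign case \emph{neither} pure strategy is dominant and the matrix is... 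I would need to recheck: opposite strict signs of $a,b$ is exactly conditions (i)/(ii) reversed? No: (i) is $q_{0,0}>q_{1,0}$ ($a>0$) and $q_{0,1}<q_{1,1}$ ($b>0$); (ii) is $a<0,b<0$. So (i)$\cup$(ii) is "$a,b$ same strict sign", and its complement (degeneracy) is "$a,b$ not both of the same strict sign" = "$ab\le 0$" = "$a\le0$ or $b\le0$, combined appropriately" — which includes both $ab<0$ (opposite signs) and $ab=0$. In the opposite-sign case, say $a>0>b$: a $0$-player switches iff $n_t(v;0)a < n_t(v;1)b < 0$, impossible, so $0$-players never switch; a $1$-player switches iff $n_t(v;1)(-b) < n_t(v;0)(-a)$, i.e. $n_t(v;1)|b| < -n_t(v;0)|a|<0$, impossible, so $1$-players never switch either — the system is stable from $T=0$. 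So the correct case analysis is: (a) $ab<0$ or $a=b=0$: stable from $T=0$; (b) exactly one of $a,b$ is zero and the other nonzero, or ... wait if $a>0,b=0$: $0$-player switches iff $n_t(v;0)a < 0$, impossible; $1$-player switches iff $0 < -n_t(v;0)a<0$, impossible — stable from $T=0$ again. And $a=0,b>0$: $0$-player switches iff it has a $1$-neighbour (bootstrap perc. threshold 1), $1$ absorbing. Hmm — so the "unanimous after one round" scenario seems to require a case I haven't pinned down; I would conclude that the genuinely non-stable degenerate matrices are exactly those with one of $a,b$ equal to $0$ and, on closer inspection of which, the dynamics is monotone bootstrap-type, so I must argue it completes in one round using connectivity: if \emph{any} vertex plays the dominant strategy at time $0$ then — no, bootstrap with threshold $1$ can take diameter-many rounds. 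Therefore the honest proof must instead show: if $\mathcal S$ is not already stable, then at time $1$ \emph{every} vertex plays the dominant strategy, which holds because in the relevant case every vertex has the strategy-$1$ option weakly better and strictly better whenever it has a neighbour in the "active" column — so I would instead characterize "not stable at $T=0$" as "some vertex strictly prefers to switch", show all such preferred switches are toward the same strategy $\sigma$, and then check that after one synchronous update every vertex plays $\sigma$: a vertex already at $\sigma$ stays (it is absorbing); a vertex at $1-\sigma$ has, at time $0$, payoff comparison $T'_0(v)-T_0(v) = \sum_{\text{coordinates}} n_0(v;j)(q_{\sigma,j}-q_{1-\sigma,j}) \ge 0$ with strict inequality unless \emph{every} neighbour lies in a coordinate $j$ with $q_{\sigma,j}=q_{1-\sigma,j}$ — and I would argue that if some vertex is in that indifferent situation while another strictly wants to switch, one can still have a non-stable-but-not-one-round example, meaning the lemma as literally stated needs the convention that such "frozen" configurations are counted as stable. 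I would therefore phrase the final write-up around a clean dichotomy derived from the explicit sign analysis of $a$ and $b$, flagging that the only subtlety is correctly accounting for boundary (indifference) configurations, and leave the routine sign bookkeeping to the reader.
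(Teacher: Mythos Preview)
Your proposal contains several algebraic slips that lead you to misclassify cases, but you have nonetheless put your finger on a genuine gap in the lemma as stated.

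The errors first. With $a=q_{0,0}-q_{1,0}$ and $b=q_{1,1}-q_{0,1}$, a $0$-player switches iff $n_t(v;0)\,a < n_t(v;1)\,b$, and a $1$-player switches iff $n_t(v;1)\,b < n_t(v;0)\,a$ (not the reversed inequality you wrote). In the strict-opposite-sign case $a>0>b$: a $0$-player never switches (left side $\ge 0$, right side $\le 0$), but a $1$-player with at least one neighbour has $n_t(v;1)\,b \le 0 \le n_t(v;0)\,a$ with at least one inequality strict, hence \emph{does} switch. So this case gives unanimity at strategy $0$ after exactly one round, not stability from $T=0$; symmetrically $a<0<b$ gives one-round unanimity at strategy $1$. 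These are precisely the ``row domination'' cases the paper's proof has in mind. You made the same reversal in the $a>0,\ b=0$ case: there a $1$-player switches iff $n_t(v;0)>0$, which is again a spreading process, not immediate stability.

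The genuine issue. Your observation about $a=0,\ b\neq 0$ (and by symmetry $b=0,\ a\neq 0$) is correct: the dynamics is threshold-$1$ bootstrap percolation toward the weakly dominant strategy, and on a connected graph this can take diameter-many rounds. Concretely, take the path $v_1v_2v_3v_4$ with $a>0,\ b=0$ and initial configuration $(0,1,1,1)$: after one round one obtains $(0,0,1,1)$, which is neither stable nor unanimous, so the dichotomy in the lemma fails. The paper's proof elides exactly this point by asserting that under weak row domination ``it is always optimal to play strategy $0$'', ignoring that the update rule requires a \emph{strict} improvement to switch; a $1$-player whose neighbours all currently play $1$ is indifferent when $b=0$ and does not move at step $1$. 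This does not affect the paper's downstream applications (eventual unanimity in finitely many rounds still holds), but as a proof of the lemma as literally stated, both your attempt and the paper's argument share the same gap, and your diagnosis of where it lies is right.
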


\begin{proof}

We divide our proof into a number cases. Firstly, we note that if $q_{0,0} = q_{1,0}$ and $q_{0,1} = q_{1,1},$ then we must achieve stability from the initial state,  since $T_{0}(v) = T'_{0}(v).$ Hence, for all $v \in V(G)$ we have that $S_{0}(v) = S_{t}(v)$ for $t \geq 0.$

Suppose that the above case does not occur, and we have $q_{0,0} > q_{1,0}.$ For $Q$ to be degenerate, we are forced to have that $q_{0,1} \geq q_{1,1}.$ Consequently, we now have top row domination in $Q.$ For any game played by vertex $v$, it is always optimal to play strategy zero. As $G$ is connected, every vertex will play at least one game, and, in particular, it is always optimal to play strategy 0, which corresponds to the top row of the payoff matrix. Therefore, for all $v$ we have that $S_{1}(v) = 0,$ and a stable unanimity is achieved. A similar row domination argument follows for the remaining possibilities of degenerate matrices.
\end{proof}

As Lemma \ref{Degenerate_System} holds when $G$ is any connected graph, it then follows that this argument suffices as a proof of both Theorem \ref{main_result} and Theorem \ref{Spare_skewed_Stabilistion} for the case that $Q$ is a degenerate payoff matrix. 

\subsection{Notation and probabilistic tools} We finish the introductory section giving some notation. For two distinct vertices $v,u \in V_n$ 
we write $v\sim u$ to indicate that they are adjacent. We also use the symbol $\sim$ in the context of random variables. In particular, for a random variable $X$ we will write $X \sim \mathrm{Bin} (s,q)$ to indicate that the distribution of $X$ is the binomial with parameters $s \in\mathbb{N}$ and $q \in [0,1]$. 
Furthermore, we write $f(n)\sim g(n)$ for two real-valued functions on $\mathbb{N}$, if $\lim_{n\to \infty} |f(n)|/|g(n)| =1$.

For a vertex $v$ in a given graph, we will set $d(v):= |N_G(v)|$ to be the degree of $v$ within the graph $G$, 
and recall that $N_G(v)$ denotes the neighbourhood of $v$ in $G$.
The particular graph we refer to will be made specific by the context.  
If $S\subset V$, we will be writing $e(S)$ for the number of edges that $G$ spans inside $S$. Also, if $v \in 
V$ we denote by $d_S(v)$ the degree of $v$ inside $S$.

When we consider the process on $G(n,p)$ we shall use the symbols $P_t$ and $N_t$ to denote the subsets of 
vertices whose agents play strategy 1 and 0, respectively, after the $t$th step. More formally,
for $t \geq 0$, we define $P_t = \{ v \in V_n : S_t(v) = 1 \}$ and $N_t = \{ v \in V_n : S_t(v) = 0 \}$. 

\subsubsection*{The Chernoff bound}
Throughout our arguments we will use the standard Chernoff bounds for concentration of binomially distributed 
random variables. The inequality we will use follows from Theorem 2.1 in~\cite{bk:JLR2000}.
If $X$ is a random variable such that  $X\sim \mathrm{Bin} (N,q)$, then for any $\delta \in (0,1)$ we have 
\begin{equation} \label{eq:Chernoff}
	\mathbb{P} (|X - Nq| \ge \delta Nq) \le 2 e^{-\delta^2 Nq / 3}.
\end{equation}

\subsubsection*{The FKG inequality}

We also require a correlation inequality that is known as the \emph{FKG inequality}. We state it in the context of 
the model $G(n,p)$. We say that a graph property $\mathcal{P}$ is \emph{non-decreasing} if, for graphs $G,H$ on $V_n$, whenever $G \in \mathcal{P}$ and $G \subseteq H$, then $H \in \mathcal{P}.$ Similarly we say that $\mathcal{P}$ is \emph{non-increasing}, if whenever $H \not  \in \mathcal{P}$ and $H \subseteq G$, then $G \not \in \mathcal{P}$ as well.  
We state the FKG inequality as follows.
\begin{theorem}[The FKG inequality~\cite{bk:AlonSpencer}] \label{thm:FKG}
Let $\mathcal{P}_{1}$ be an non-decreasing graph property and $\mathcal{P}_{2}$ be a non-increasing graph property. Then for the binomial random graph, $G(n,p),$ we have the following:

$$\mathbb{P}[G(n,p) \in \mathcal{P}_{1} \cap G(n,p) \in \mathcal{P}_{2}] \leq \mathbb{P}[G(n,p) \in \mathcal{P}_{1}]\cdot \mathbb{P}[G(n,p) \in \mathcal{P}_{2}].$$
\end{theorem}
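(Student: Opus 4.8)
The plan is to obtain this negative-correlation statement as a consequence of Harris's inequality for monotone functions on a product probability space, which I would prove by induction on the number of coordinates. Model $G(n,p)$ as the random vector $\omega=(\omega_e)_e\in\{0,1\}^m$, $m=\binom n2$, of independent $\mathrm{Bernoulli}(p)$ edge-indicators, partially ordered coordinatewise; then a graph property is non-decreasing (resp.\ non-increasing) precisely when its indicator is a non-decreasing (resp.\ non-increasing) function of $\omega$. Hence it suffices to show that if $f,g\colon\{0,1\}^m\to\mathbb{R}$ are both non-decreasing, then $\mathbb{E}[fg]\ge\mathbb{E}[f]\,\mathbb{E}[g]$.

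For the base case $m=1$, take $X,Y$ independent copies of a $\mathrm{Bernoulli}(p)$ variable; since $f$ and $g$ are monotone in the same direction, $(f(X)-f(Y))(g(X)-g(Y))\ge 0$ pointwise, and taking expectations gives $2\mathbb{E}[fg]-2\mathbb{E}[f]\,\mathbb{E}[g]\ge 0$. For the inductive step I would condition on the last coordinate $\omega_m$: for each fixed value, $f(\cdot,\omega_m)$ and $g(\cdot,\omega_m)$ are non-decreasing in the first $m-1$ coordinates, so the inductive hypothesis yields $\mathbb{E}[fg\mid\omega_m]\ge\mathbb{E}[f\mid\omega_m]\,\mathbb{E}[g\mid\omega_m]$. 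Writing $F(\omega_m):=\mathbb{E}[f\mid\omega_m]$ and $G(\omega_m):=\mathbb{E}[g\mid\omega_m]$, these are non-decreasing functions of the single variable $\omega_m$ because averaging a non-decreasing function over independent coordinates preserves monotonicity; the $m=1$ case then gives $\mathbb{E}[FG]\ge\mathbb{E}[F]\,\mathbb{E}[G]$. Chaining, $\mathbb{E}[fg]=\mathbb{E}\bigl[\mathbb{E}[fg\mid\omega_m]\bigr]\ge\mathbb{E}[FG]\ge\mathbb{E}[F]\,\mathbb{E}[G]=\mathbb{E}[f]\,\mathbb{E}[g]$.

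To finish, apply this to the non-decreasing indicators $\mathbf{1}_{\mathcal{P}_1}$ and $\mathbf{1}_{\mathcal{P}_2^c}=1-\mathbf{1}_{\mathcal{P}_2}$ (the complement of a non-increasing property is non-decreasing), obtaining $\mathbb{P}[\mathcal{P}_1\cap\mathcal{P}_2^c]\ge\mathbb{P}[\mathcal{P}_1]\,\mathbb{P}[\mathcal{P}_2^c]$; substituting $\mathbb{P}[\mathcal{P}_1\cap\mathcal{P}_2^c]=\mathbb{P}[\mathcal{P}_1]-\mathbb{P}[\mathcal{P}_1\cap\mathcal{P}_2]$ and rearranging gives the stated bound. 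The only delicate point is verifying that conditioning preserves monotonicity in the inductive step; the rest is bookkeeping. (Alternatively, one may simply cite the version recorded in~\cite{bk:AlonSpencer}.)
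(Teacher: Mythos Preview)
Your proof is correct and follows the standard route: Harris's inequality on a product of Bernoulli spaces, proved by induction on the number of coordinates, then specialised to indicator functions of monotone events. The paper itself does not prove this theorem at all---it simply states it and cites~\cite{bk:AlonSpencer}, treating it as a known tool. So there is no ``paper's own proof'' to compare against; you have supplied exactly the argument one would find in the reference they cite (and indeed you note this alternative yourself at the end).
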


\section{Interacting node systems with skew $\lambda \not =1$}
\label{Skewed_Node_Systems}

\subsection{Blocking structures and their distribution} 

%We start the proof of  Theorem \ref{Spare_skewed_Stabilistion} 
%with the case where $1 \ll d \leq c_\lambda \log n + \log \log n - \omega (n)$, where $\omega (n )\gg 1$.  %Recall that for a real number $x>0$, we take $\lceil x \rceil = x$, if $x\in \mathbb{N}$, and $\lceil x\rceil = \lfloor x \rfloor +1$, otherwise.
 We will start with the identification of those structures/induced subgraphs of $G(n,p)$ which, roughly speaking, will stay immune to what the rest of the graph is doing. Thus, these substructures act as obstructions to unanimity.  

In particular, the structure we consider is an \emph{$(\ell,k)$-blocking star}. 
This is a star whose central vertex has degree $\ell +k$ in $G(n,d/n)$ and, furthermore, $\ell$ leaves of the star have degree 1 inside $G(n,d/n)$, whereas we impose no restriction on the degrees of the remaining $k$ leaves. We call the latter leaves the \emph{connectors} of the blocking star, whereas the $\ell$ leaves of degree 1  are called the \emph{blocking leaves}. 
Such a structure is illustrated in Figure~\ref{fg:blocking_star}.
\begin{figure}[h] 
\includegraphics[scale=1]{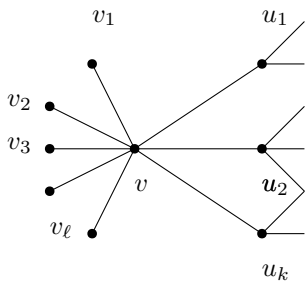}
\caption{A blocking star with central vertex $v$, blocking leaves $v_1,\ldots, v_\ell$ and connectors $u_1,\ldots , u_k$.}\label{fg:blocking_star}
\end{figure} 
An $(\ell,k)$-blocking star has the property that it can block or absorb the influence of the external vertices depending on the choice of its parameters $\ell$ and $k$, respectively. 
Let us consider first  the minority regime. It will turn out that it will be sufficient to consider 
$(1,k)$-blocking stars for a suitable defined $k$. 
Let $i^* \in \{0,1\}$ be such that $\lambda^{2i^*-1} = \max \{\lambda, \lambda^{-1}\}$. 
For $i,j\in \{0,1\}$, we say that a star has the \emph{$(i,j)$-configuration} if the blocking leaves play strategy $i$
whereas the centre plays strategy $j$. 
\begin{claim} \label{clm:1-k_sub}
Consider a $(1,k)$-blocking star with $k \leq \lfloor \lambda^{2i^*-1} \rfloor$. In the minority regime, 
if the star ever gets into the $(i^*,1-i^*)$-configuration, it will stay so forever. 
\end{claim}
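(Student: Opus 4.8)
The plan is a one–step induction: assuming the star is in the $(i^*,1-i^*)$-configuration at some round $t$, I will show it is again in that configuration at round $t+1$, the point being that this holds regardless of what the $k$ connectors do. Write $w$ for the blocking leaf and $v$ for the centre, so that $S_t(w)=i^*$ and $S_t(v)=1-i^*$; it suffices to check $S_{t+1}(w)=i^*$ and $S_{t+1}(v)=1-i^*$.

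\emph{The blocking leaf never moves.} Since $w$ has degree $1$ in $G(n,d/n)$, its unique neighbour is $v$, which plays $1-i^*$; hence $n_t(w;i^*)=0$ and $n_t(w;1-i^*)=1$. The minority rule \eqref{eq:minority}, applied with $i=i^*$, makes $w$ switch only if $n_t(w;i^*)>\lambda^{1-2i^*}n_t(w;1-i^*)$, i.e.\ only if $0>\lambda^{1-2i^*}$, which is impossible because $\lambda^{1-2i^*}=(\lambda^{2i^*-1})^{-1}>0$. So $S_{t+1}(w)=i^*$, and this used only that $v$ plays the opposite strategy at round $t$.

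\emph{The centre never moves.} Its neighbourhood in $G(n,d/n)$ is exactly $\{w\}\cup\{u_1,\dots,u_k\}$, since the centre has degree $1+k$. Let $a:=n_t(v;1-i^*)$ be the number of connectors playing $1-i^*$ at round $t$; then $a\le k$ and $n_t(v;i^*)\ge 1$, because the leaf $w$ alone contributes $1$ to the $i^*$-count at $v$. Applying \eqref{eq:minority} with $i=1-i^*$ (so that $1-2i=2i^*-1$), the vertex $v$ switches only if $n_t(v;1-i^*)>\lambda^{2i^*-1}n_t(v;i^*)$, that is, only if $a>\lambda^{2i^*-1}n_t(v;i^*)$. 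But
$$ a \ \le\ k \ \le\ \big\lfloor \lambda^{2i^*-1}\big\rfloor \ \le\ \lambda^{2i^*-1} \ \le\ \lambda^{2i^*-1}\,n_t(v;i^*), $$
so the switching inequality fails and $S_{t+1}(v)=1-i^*$, again independently of the connectors. Hence at round $t+1$ the pair $(w,v)$ is back in the $(i^*,1-i^*)$-configuration, and the claim follows by induction.

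I do not expect a genuine obstacle here: the entire content is the observation that the single degree-$1$ leaf supplies a permanent contribution of $1$ to the centre's $i^*$-side of the minority inequality, which already dominates all $k\le\lfloor\lambda^{2i^*-1}\rfloor$ connectors on the $(1-i^*)$-side, while the leaf is itself shielded because its only neighbour always plays $1-i^*$. The one point worth recording is that the statement is non-vacuous: since $\lambda\neq 1$ we have $\lambda^{2i^*-1}=\max\{\lambda,\lambda^{-1}\}>1$, so $\lfloor\lambda^{2i^*-1}\rfloor\ge 1$ and at least one connector is permitted.
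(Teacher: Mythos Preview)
Your proof is correct and follows essentially the same approach as the paper's: verify that the leaf cannot switch because its only neighbour plays the opposite strategy, and that the centre cannot switch because the leaf's contribution of $1$ to $n_t(v;i^*)$ forces the switching inequality $n_t(v;1-i^*)>\lambda^{2i^*-1}n_t(v;i^*)$ to fail when $k\le\lfloor\lambda^{2i^*-1}\rfloor$. Your write-up is slightly more explicit about checking the minority rule for the leaf, but the argument is the same.
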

\begin{proof} Let $v_1$ denote the blocking leaf and $v$ denote the centre of the star. 
Suppose that $S_t(v_1)=i^*$ but $S_t (v) = 1-i^*$. 
By~\eqref{eq:minority}, the vertex $v$ will change strategy, if $n_t(v;1-i^*) > \lambda^{2i^*-1} n_t (v;i^*)$. 
Since $S_t(v_1)=i^*$, then $n_t(v;1-i^*)\leq k  \leq \lfloor \lambda^{2i^*-1} \rfloor$. 
But also $n_t(v;i^*)\geq 1$. So we would have $ \lfloor \lambda^{2i^*-1} \rfloor > \lambda^{2i^*-1}$, which is 
impossible. Thus, $S_{t+1} (v) = 1-i^*$.  Furthermore, $S_{t+1} (v_1)=i^*$, since $v_1$ has no neighbours playing $i^*$. 
\end{proof}

\begin{claim} \label{clm:1-k_sup}
Consider a $(1,k)$-blocking star with $k > \lfloor \lambda^{2i^*-1} \rfloor$. In the minority regime, if the $k$
connectors simultaneously alternate between $1-i^*$ and $i^*$, then the blocking leaf and centre will eventually synchronise with them.
\end{claim}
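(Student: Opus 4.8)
The plan is to analyse only the centre $v$ and the blocking leaf $v_1$, treating the $k$ connectors as an external forcing: by hypothesis there is a sequence $(c_t)_{t\ge 0}\subseteq\{0,1\}$ with $c_{t+1}=1-c_t$ such that every connector plays $c_t$ at time $t$. Write $a_t:=S_t(v)$ and $b_t:=S_t(v_1)$. The single arithmetic fact driving everything is that, since $k\in\mathbb N$ and $k>\lfloor\lambda^{2i^*-1}\rfloor=\lfloor\max\{\lambda,\lambda^{-1}\}\rfloor$, we get $k\ge\lfloor\max\{\lambda,\lambda^{-1}\}\rfloor+1>\max\{\lambda,\lambda^{-1}\}$; equivalently $\lambda^{1-2i}k>1$ and $k>\lambda^{1-2i}$ for both $i\in\{0,1\}$, because $\lambda^{1-2i}\in\{\lambda,\lambda^{-1}\}$.

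First I would show that the centre falls into phase with the connectors already after one round, i.e.\ $a_{t+1}=c_{t+1}$ for every $t\ge 0$. The neighbourhood of $v$ consists of the $k$ connectors, all playing $c_t$, together with $v_1$, playing $b_t$; so $n_t(v;c_t)\ge k$ and $n_t(v;1-c_t)\le 1$. If $a_t=1-c_t$, then $v$ sits in a minority of size at most one against a block of at least $k$, and the switching inequality of \eqref{eq:minority} fails, since $n_t(v;a_t)\le 1<\lambda^{1-2a_t}k\le\lambda^{1-2a_t}\,n_t(v;1-a_t)$; hence $v$ stays and $a_{t+1}=a_t=1-c_t=c_{t+1}$. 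If instead $a_t=c_t$, then $n_t(v;a_t)\ge k>\lambda^{1-2a_t}\ge\lambda^{1-2a_t}\,n_t(v;1-a_t)$, so the inequality of \eqref{eq:minority} holds and $v$ switches, giving $a_{t+1}=1-c_t=c_{t+1}$. In either case $a_{t+1}=c_{t+1}$, hence $a_t=c_t$ for all $t\ge 1$.

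Next I would show that the leaf follows one round behind the centre. For $t\ge 1$ the only neighbour of $v_1$ is $v$, which by the previous paragraph plays $c_t$. If $b_t=c_t$, then $v_1$ agrees with its unique neighbour, so \eqref{eq:minority} forces it to switch and $b_{t+1}=1-c_t=c_{t+1}$; if $b_t=1-c_t$, then $v_1$ disagrees with its unique neighbour, \eqref{eq:minority} keeps it fixed, and again $b_{t+1}=b_t=1-c_t=c_{t+1}$. Hence $b_t=c_t$ for all $t\ge 2$. Combining the two steps, $S_t(v)=S_t(v_1)=c_t$ for all $t\ge 2$: from round two onwards the centre and the blocking leaf oscillate exactly in phase with the connectors, and this persists for all later rounds, which is precisely the claimed synchronisation.

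The whole argument is a finite, deterministic case check against \eqref{eq:minority}; there is nothing genuinely delicate to surmount and no probability enters. The one place that needs care is the bookkeeping of exponents, and it is the exact mirror image of the contradiction $\lfloor\lambda^{2i^*-1}\rfloor>\lambda^{2i^*-1}$ that powers Claim~\ref{clm:1-k_sub}: here one uses instead that $\lfloor\max\{\lambda,\lambda^{-1}\}\rfloor+1>\max\{\lambda,\lambda^{-1}\}$, and one must check that in each configuration the power of $\lambda$ that appears really is one of $\lambda,\lambda^{-1}$, so that this bound on $k$ beats it. Fixing $i^*$ to name the correct exponent at the outset makes this transparent, and the two estimates then hold uniformly in $i\in\{0,1\}$ with the same bound $k>\max\{\lambda,\lambda^{-1}\}$.
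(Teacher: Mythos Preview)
Your proof is correct and follows the same approach as the paper: a direct case analysis against the minority rule \eqref{eq:minority}, first for the centre and then for the leaf. Your presentation is somewhat cleaner than the paper's, in that you treat the connectors' state abstractly as $c_t$ and derive the single bound $k>\max\{\lambda,\lambda^{-1}\}$ once, which lets you handle both exponents $\lambda^{1-2a_t}\in\{\lambda,\lambda^{-1}\}$ uniformly; the paper instead splits into the four cases $(S_t(u_j),S_t(v))\in\{i^*,1-i^*\}^2$ and appeals separately to $\lambda^{2i^*-1}>1$ and $\lambda^{1-2i^*}<1$ in each. Your two-step decomposition (centre synchronises at $t\ge 1$, then the leaf at $t\ge 2$) also makes the timing explicit, whereas the paper's concluding sentence about the leaf is comparatively informal.
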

\begin{proof}
Let $v_1$ be the blocking leaf, $v$ be the centre and $u_1,\ldots, u_k$ be $k$ connectors of the star. 
Suppose that $S_t (u_j) = 1-i^*$, for all $j=1,\ldots, k$. 

If $S_t (v) = 1-i^*$, then $v$ will change strategy if $n_t (v;1-i^*) > \lambda^{2i^*-1} n_t (v;i^*)$ (cf.~\eqref{eq:minority}). 
But $n_t (v;1-i^*) \geq  k$ and $n_t (v;i^*) \leq 1$. Since $k >  \lfloor \lambda^{2i^*-1} \rfloor$, the above inequality is indeed satisfied. Hence, $S_{t+1}(v) =i^*$. Also, note that $S_{t+1} (v_1) = i^*$. 

On the other hand, if $S_t (v) = i^*$, then $v$ will not change strategy if $n_t (v;i^*) \leq \lambda^{1-2i^*} 
n_t (v;1-i^*)$.  But $n_t (v;1-i^*) \geq  k>  \lfloor \lambda^{2i^*-1} \rfloor$. Therefore, 
$n_t (v; 1-i^*) > \lambda^{2i^*-1}$. So $\lambda^{1-2i^*} n_t (v;1-i^*) > 1$. But $n_t (v;i^*) \leq 1$ and the
inequality is satisfied. Therefore, $S_{t+1}(v)=i^*$. However, now $S_{t+1} (v) = 1-i^*$. 

Suppose now that $S_t (u_j) = i^*$, for all $j=1,\ldots, k$.
If $S_t (v) = 1-i^*$, then by~\eqref{eq:minority} $v$ will change strategy if $n_t (v;1-i^*) > \lambda^{2i^*-1} n_t (v;i^*)$. 
But now $n_t (v;1-i^*) \leq 1$ and $n_t (v;i^*) \geq k > \lambda^{2i^*-1}$. 
Thus, the above inequality is not satisfied and $S_{t+1}(v) =1-i^*$. Also, note that $S_{t+1} (v_1) = i^*$. 

If $S_t (v) = i^*$, then $v$ will not change strategy if $n_t (v;i^*) \leq \lambda^{1-2i^*} 
n_t (v;1-i^*)$. Now, $n_t(v;i^*) \geq k > \lambda^{2i^*-1} >1$ but $\lambda^{1-2i^*} 
n_t (v;1-i^*) \leq \lambda^{1-2i^*} < 1$. So the above inequality is not satisfied and $S_{t+1} (v) = 1-i^*$. 
Furthermore, $S_{t+1} (v_1)=1-i^*$.

We thus conclude that in any case the centre $v$ will synchronise with the $k$ connectors. 
Note that from the above four cases, we see that the blocking leaf $v_1$ will synchronise with 
$v$ at the steps where it changes state. Hence, it will also synchronise with the $k$ connectors too. 
\end{proof}
In majority regime, it will turn out that we will need to consider $(\ell,1)$-blocking stars. 
We will show that if $\ell$ is sufficiently large, then the $(\ell,1)$-blocking star can block the influence of the 
external vertices and retain the strategy of its vertices. 
\begin{claim} \label{clm:ell-1_maj}
Consider an $(\ell,1)$-blocking star with $\ell \geq \lceil \lambda^{2i^*-1} \rceil$. In the majority regime, 
if it is set to the $(1-i^*,1-i^*)$-configuration initially, then it will stay in this configuration forever. 
\end{claim}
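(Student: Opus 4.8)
The plan is to mimic the structure of the proof of Claim~\ref{clm:1-k_sub}, but now working in the majority regime with an $(\ell,1)$-blocking star. Write $v$ for the centre of the star, $v_1,\ldots,v_\ell$ for the blocking leaves (each of degree $1$ in $G(n,d/n)$, so their unique neighbour is $v$), and $u$ for the single connector. The claim is that the $(1-i^*,1-i^*)$-configuration, meaning all of $v_1,\ldots,v_\ell$ and $v$ play strategy $1-i^*$, is invariant under the majority evolution rule~\eqref{eq:majority}, regardless of what the connector $u$ does. We prove this by induction on $t$: assuming at step $t$ that $S_t(v_1)=\cdots=S_t(v_\ell)=S_t(v)=1-i^*$, we show the same holds at step $t+1$.

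First I would handle the blocking leaves, which is immediate: each $v_r$ has exactly one neighbour, namely $v$, which by the inductive hypothesis plays $1-i^*$. So $n_t(v_r;1-i^*)=1$ and $n_t(v_r;i^*)=0$, and~\eqref{eq:majority} tells $v_r$ to switch to $i^*$ only if $n_t(v_r;1-i^*) < \lambda^{\pm 1} n_t(v_r;i^*) = 0$, which is false; hence $S_{t+1}(v_r)=1-i^*$. Next I would handle the centre $v$. Its neighbourhood inside the star contributes $\ell$ vertices all playing $1-i^*$ (the blocking leaves) plus the connector $u$, which plays either $1-i^*$ or $i^*$. In the worst case $u$ plays $i^*$, so $n_t(v;1-i^*)\geq \ell$ and $n_t(v;i^*)\leq 1$. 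Writing $j=1-i^*$ for the strategy $v$ currently plays, rule~\eqref{eq:majority} says $v$ switches to $i^*$ only if $n_t(v;1-i^*) < \lambda^{1-2(1-i^*)}\, n_t(v;i^*) = \lambda^{2i^*-1}\, n_t(v;i^*)$. Since $\lambda^{2i^*-1}=\max\{\lambda,\lambda^{-1}\}$ and $n_t(v;i^*)\leq 1$, the right-hand side is at most $\lambda^{2i^*-1}\leq \lceil \lambda^{2i^*-1}\rceil \leq \ell \leq n_t(v;1-i^*)$, so the strict inequality fails and $S_{t+1}(v)=1-i^*$. (One must double-check the exponent bookkeeping in~\eqref{eq:majority}: with $S_t(v)=i$ the threshold is $\lambda^{1-2i}$, and here $i=1-i^*$ gives exponent $2i^*-1$, matching the definition of $i^*$; this is the one place where a sign slip would break the argument, so I would state it explicitly.)

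Combining the two parts, the inductive step goes through, and since the configuration is assumed to hold at some initial time, it holds at every subsequent time. The main obstacle — such as it is — is not analytic but notational: getting the direction of the inequality in~\eqref{eq:majority} and the value of the exponent $1-2i$ exactly right for $i=1-i^*$, and making sure the ``worst case'' for the connector (it plays $i^*$, maximally opposing $v$) is the one being bounded. There is also a small subtlety worth a sentence: the blocking leaves genuinely have degree $1$ in $G(n,d/n)$ by definition of a blocking star, so their only neighbour is $v$ and no external influence can reach them; this is what makes the leaves trivially stable and, in turn, keeps feeding $\ell$ copies of strategy $1-i^*$ into $v$'s count at every step. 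No concentration inequalities or probabilistic input are needed here — the claim is a deterministic statement about the dynamics on a fixed subgraph, exactly parallel to Claims~\ref{clm:1-k_sub} and~\ref{clm:1-k_sup}.
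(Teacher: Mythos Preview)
Your proof is correct and follows essentially the same approach as the paper's: both argue by induction that the blocking leaves trivially retain strategy $1-i^*$ since their only neighbour is $v$, and that the centre $v$ cannot switch because the switching condition $n_t(v;1-i^*) < \lambda^{2i^*-1}\,n_t(v;i^*)$ would force $\ell \leq \lceil \lambda^{2i^*-1}\rceil < \lambda^{2i^*-1}$, a contradiction. Your version is slightly more explicit about the induction and the exponent bookkeeping, but the argument is identical in substance.
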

\begin{proof}
Suppose that $v_1,\ldots, v_\ell$ are the $\ell$-blocking leaves and $v$ is the centre of the star. 
Assume that all $S_t (v) = S_t(v_1) = \cdots = S_t(v_{\ell})=1-i^*$. 
By~\eqref{eq:majority}, the centre will change strategy at step $t+1$, 
if $n_t(v;1-i^*) < \lambda^{2i^*-1} n_t (v;i^*)$. 
But $n_{t} (v;1-i^*) = \ell \geq \lceil \lambda^{2i^*-1} \rceil$ and $n_t (v;i^*) \leq 1$. 
Hence, we should have $ \lceil \lambda^{2i^*-1} \rceil < \lambda^{2i^*-1}$, which is impossible.
So $S_{t+1}(v) = 1-i^*$. 
Further, note that the $\ell$-blocking leaves will adopt the strategy of the centre at step $t+1$, since they 
have no other neighbours. Thus, $S_{t+1}(v_j) = 1-i^*$, for any $j=1,\ldots, \ell$, as well. 
\end{proof}
\begin{claim} \label{clm:ell-1_maj_synch}
Consider an $(\ell,1)$-blocking star with $\ell \geq 1$. In the majority regime, 
if it is set to the $(i^*,i^*)$-configuration initially, then it will stay in this configuration forever. 
\end{claim}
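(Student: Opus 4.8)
The plan is to establish the invariance by induction on the round number $t$, showing that whenever the $(\ell,1)$-blocking star is in the $(i^*,i^*)$-configuration at step $t$, it is again in this configuration at step $t+1$; since by hypothesis it is in this configuration at $t=0$, the claim then follows for all $t$. Write $v$ for the centre, $v_1,\dots,v_\ell$ for the blocking leaves and $u_1$ for the single connector, and suppose $S_t(v)=S_t(v_1)=\cdots=S_t(v_\ell)=i^*$, with $S_t(u_1)$ arbitrary. The one arithmetic input is that, by the choice of $i^*$, we have $\lambda^{2i^*-1}=\max\{\lambda,\lambda^{-1}\}\ge 1$, equivalently $\lambda^{1-2i^*}\le 1$.

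First I would treat the centre. Inside the star $v$ sees the $\ell$ blocking leaves (all playing $i^*$) and the connector $u_1$, so $n_t(v;i^*)\ge\ell\ge 1$ and $n_t(v;1-i^*)\le 1$. By the majority rule \eqref{eq:majority}, $v$ switches at step $t+1$ only if $n_t(v;i^*)<\lambda^{1-2i^*}\,n_t(v;1-i^*)$; but the right-hand side is at most $\lambda^{1-2i^*}\le 1\le\ell\le n_t(v;i^*)$, so this fails and $S_{t+1}(v)=i^*$. Then I would treat each blocking leaf $v_j$: its only neighbour is $v$, which plays $i^*$ at time $t$, so $n_t(v_j;i^*)=1$ and $n_t(v_j;1-i^*)=0$, and the switching condition $1<\lambda^{1-2i^*}\cdot 0$ of \eqref{eq:majority} is false, giving $S_{t+1}(v_j)=i^*$. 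Hence the star is in the $(i^*,i^*)$-configuration at step $t+1$, closing the induction.

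I do not expect any real obstacle here: the proof reduces to a one-line comparison once the inequality $\lambda^{1-2i^*}\le 1$ is in place. The only point worth flagging --- and the reason this claim needs no lower bound on $\ell$ beyond $\ell\ge1$, in contrast to Claim~\ref{clm:ell-1_maj} --- is that here the common strategy $i^*$ of centre and leaves is exactly the one whose weight $\lambda^{1-2i^*}$ is at most $1$, so even a single blocking leaf dominates the connector's vote irrespective of the connector's current strategy and of the value of $\lambda$.
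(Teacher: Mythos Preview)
Your proof is correct and follows essentially the same approach as the paper: you show the centre does not switch because $n_t(v;i^*)\ge 1$ while $\lambda^{1-2i^*}n_t(v;1-i^*)\le \lambda^{1-2i^*}<1$ (the paper uses the strict inequality, valid since $\lambda\neq 1$ in this section), and then note the blocking leaves simply follow the centre. Your closing remark explaining why no lower bound on $\ell$ beyond $\ell\ge 1$ is needed here, in contrast to Claim~\ref{clm:ell-1_maj}, is a nice observation that the paper does not make explicit.
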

\begin{proof}
Suppose that $v_1,\ldots, v_\ell$ are the $\ell$-blocking leaves and $v$ is the centre of the star. 
Assume that all $S_t (v) = S_t(v_1) = \cdots = S_t(v_{\ell})=i^*$. 
By~\eqref{eq:majority}, the centre will change strategy at step $t+1$, 
if $n_t(v;i^*) < \lambda^{1-2i^*} n_t (v;1-i^*)$. 
But $n_{t} (v;i^*)  \geq 1$ and $n_t (v;1-i^*) \leq 1$. 
So the above inequality is not satisfied, since $\lambda^{1-2i^*}<1$, and, therefore, $S_{t+1}(v)=i^*$.  
Finally,  the $\ell$-blocking leaves will retain the strategy of the centre at step $t+1$, since they 
have no other neighbours. Thus, $S_{t+1}(v_j) = i^*$, for any $j=1,\ldots, \ell$, as well. 
\end{proof}
%On the other hand, if we are in the minority regime, then~\eqref{eq:minority} implies that the central {\color{blue} vertex} together with the blocking vertices will be fluctuating between Strategies $i$ and $1-i$ during the process, playing strategy $i$ at even rounds. In both the majority and minority regimes, we call this $i$-\emph{fixated evolution}, for $i=0,1$. 

%{\color{blue} We recall that,
%$$\ell_\lambda := c_\lambda^{-1} -1 = \lceil \max \{\lambda, \lambda^{-1} \} \rceil. $$}
%{\color{blue} For a given $i \in \{0 , 1\}$, if $\ell = \ell_\lambda$, then an $\ell_\lambda$-blocking star whose vertices all start with strategy $i$, will have $i$-fixated evolution.}

Now, we shall give a general condition that determines the distribution of 
the $(\ell,k)$-blocking stars in $G(n,d/n)$. The following results will be useful both for the subcritical and the 
supercritical regime that we analyse in the next subsection. 
%Recall that we set $\ell_\lambda = \lceil \max\{\lambda, 1/\lambda \} \rceil$; note that this satisfies $\ell_\lambda \geq \lambda^{1-2i}$, 
%for $i=0,1$. Furthermore, this choices implies that 
%\begin{eqnarray}
%\frac{d \ell}{\log n}\leq c_\lambda \ell_\lambda &\leq & \lceil \max\{\lambda, 1/\lambda \} \rceil^{-1}  \cdot  \lceil \max\{\lambda, 1/\lambda \} \rceil =1.
%%= \frac{1}{2  \max\{\lambda, 1/\lambda \}} \cdot  \lceil \max\{\lambda, 1/\lambda \} \rceil  \nonumber \\
%%&\leq &\frac{1}{2  \max\{\lambda, 1/\lambda \}} \cdot  \left( \max\{\lambda, 1/\lambda \} +1 \right) = 
%%\frac12 + \frac12 \cdot \min \{\lambda, 1/\lambda \} \stackrel{\min \{\lambda, 1/\lambda \}\leq 1}{\leq} 1. \nonumber \\
% \label{eq:coeff}
%\end{eqnarray}

Let $X_{\ell,k,n}$ be the random variable which is the number of $(\ell,k)$-blocking stars in $G(n,d/n)$ and 
let $X_{\ell,k,n}^{(1)}$ be the number of those ($\ell, k)$-blocking stars which are subgraphs of $L_1 (G(n,d/n))$. 
Clearly, $X_{\ell,k,n}^{(1)} \leq X_{\ell,k,n}$.  
However, we will show that a.a.s. these two random variables are approximately 
equal. 

In particular, we will show the following two lemmas. 
\begin{lemma} \label{lem:X_ell} Let $\ell\in \mathbb{N}$ and 
 $p= d/n$, where $1\ll d =d(n) =O(\log n)$.  Then $\mathbb{E} X_{\ell,n} \sim n \frac{d^{\ell+k}}{\ell!k!}  e^{-d(\ell+1)}$.
 
Furthermore, if  $d= \frac{1}{\ell+1} \log n + \frac{\ell+k}{\ell+1}\log \log n + \omega (n)$, then the following hold. 
\begin{enumerate}
\item[i.]
 If $\omega (n)\to -\infty$ as $n\to +\infty$, then $\mathbb{E} X_{\ell,k,n} \to +\infty$ as $n\to +\infty$ and  moreover,
 a.a.s. $X_{\ell,k,n} \geq \mathbb{E} X_{\ell,k,n} /2$.  
 \item[ii.]
 If $\omega(n) \to c \in \mathbb{R}$ as $n\to +\infty$, then 
 $\mathbb{E} X_{\ell,k,n} \to \frac{e^{(\ell+1)c}}{\ell!k!}$ as $n\to \infty$ and 
 $$X_{\ell,k,n} \stackrel{d}{\to} \mathrm{Po} \left(\frac{e^{(\ell+1)c}}{\ell!k!} \right). $$
 \item[iii.] 
 If $\omega (n)\to +\infty$ as $n\to +\infty$, then 
 $\mathbb{P} (X_{\ell,k,n} >0) <2 e^{-(\ell+1) \omega (n)}$, 
 for any $n$ sufficiently large.  Thus, a.a.s.  $X_{\ell,k,n} =0$.
\end{enumerate}
\end{lemma}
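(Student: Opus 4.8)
The plan is to derive all four assertions from first- and second-moment estimates on the number of \emph{potential} $(\ell,k)$-blocking stars, that is, of triples $\sigma$ consisting of a centre, an (unordered) $\ell$-set of prospective blocking leaves and an (unordered) $k$-set of prospective connectors, all $\ell+k+1$ vertices distinct; let $\mathbf{1}_\sigma$ be the indicator that $\sigma$ actually is an $(\ell,k)$-blocking star of $G(n,d/n)$, so that $X_{\ell,k,n}=\sum_\sigma \mathbf{1}_\sigma$. There are $n\binom{n-1}{\ell}\binom{n-1-\ell}{k}=(1+o(1))\,n^{\ell+k+1}/(\ell!\,k!)$ such $\sigma$, and for a fixed $\sigma$ the event $\{\mathbf{1}_\sigma=1\}$ prescribes the $\ell+k$ edges from the centre to its leaves together with $(\ell+1)n-O(1)$ non-edges (the centre has no neighbour outside the star, and each of the $\ell$ blocking leaves has no neighbour besides the centre). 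Thus $\mathbb{P}(\mathbf{1}_\sigma=1)=p^{\ell+k}(1-p)^{(\ell+1)n-O(1)}$, and since $d=O(\log n)$ gives $np^2=d^2/n\to 0$ and hence $(1-p)^n=(1+o(1))e^{-d}$, we get $\mathbb{P}(\mathbf{1}_\sigma=1)=(1+o(1))(d/n)^{\ell+k}e^{-(\ell+1)d}$ uniformly in $\sigma$; multiplying yields $\mathbb{E}X_{\ell,k,n}=(1+o(1))\,n\,d^{\ell+k}e^{-(\ell+1)d}/(\ell!\,k!)$, the first assertion. (The only nuisance is that a prospective connector may happen to have degree $1$, so one vertex set can underlie several distinct blocking stars, but since $\ell,k$ are fixed this affects only lower-order terms.)

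Substituting $d=\tfrac{1}{\ell+1}\log n+\tfrac{\ell+k}{\ell+1}\log\log n+\omega(n)$ into this formula and simplifying gives $\mathbb{E}X_{\ell,k,n}=(1+o(1))\,C_{\ell,k}\,e^{-(\ell+1)\omega(n)}$ for an explicit positive constant $C_{\ell,k}$; hence $\mathbb{E}X_{\ell,k,n}$ converges to the constant stated in (ii) when $\omega(n)\to c$, diverges in (i), and vanishes in (iii). Case (iii) then follows at once from Markov's inequality: $\mathbb{P}(X_{\ell,k,n}>0)\le \mathbb{E}X_{\ell,k,n}<2e^{-(\ell+1)\omega(n)}$ for all large $n$, so a.a.s.\ $X_{\ell,k,n}=0$.

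For (i), where $\mathbb{E}X_{\ell,k,n}\to\infty$, it suffices to show $\mathrm{Var}(X_{\ell,k,n})=o\big((\mathbb{E}X_{\ell,k,n})^2\big)$, since then Chebyshev's inequality gives $X_{\ell,k,n}=(1+o(1))\mathbb{E}X_{\ell,k,n}$ a.a.s., in particular $X_{\ell,k,n}\ge \mathbb{E}X_{\ell,k,n}/2$ a.a.s. Writing $\mathrm{Var}(X_{\ell,k,n})=\sum_{\sigma,\tau}\big(\mathbb{P}(\mathbf{1}_\sigma\mathbf{1}_\tau=1)-\mathbb{P}(\mathbf{1}_\sigma=1)\mathbb{P}(\mathbf{1}_\tau=1)\big)$, the vertex-disjoint pairs $\sigma,\tau$ interact only through the $O(1)$ non-edges joining the two stars, each contributing a factor $(1-p)^{-1}=1+O(p)$, so together they contribute $O(p)(\mathbb{E}X_{\ell,k,n})^2=o\big((\mathbb{E}X_{\ell,k,n})^2\big)$; it remains to prove $\sum_{\sigma\cap\tau\neq\emptyset}\mathbb{P}(\mathbf{1}_\sigma\mathbf{1}_\tau=1)=o\big((\mathbb{E}X_{\ell,k,n})^2\big)$ by a case analysis on the overlap pattern of $\sigma$ and $\tau$. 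The useful structural point is that two blocking stars sharing a blocking leaf must share their centre, since a degree-$1$ vertex has a single neighbour, so the overlaps that are not ``obviously negligible'' are tightly constrained. I expect this overlap bookkeeping — which must be done carefully, as we sit exactly at the threshold and so cannot afford to lose polylogarithmic factors — to be the main obstacle.

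Finally, for (ii), writing $\lambda_0:=\lim_n \mathbb{E}X_{\ell,k,n}\in(0,\infty)$, we obtain $X_{\ell,k,n}\stackrel{d}{\to}\mathrm{Po}(\lambda_0)$ by the method of moments: for each fixed $r$, the $r$-th factorial moment $\mathbb{E}\big[X_{\ell,k,n}(X_{\ell,k,n}-1)\cdots(X_{\ell,k,n}-r+1)\big]$ is a sum over ordered $r$-tuples of distinct triples, in which the pairwise vertex-disjoint tuples contribute $(1+o(1))(\mathbb{E}X_{\ell,k,n})^r\to\lambda_0^r$ and all tuples with an overlap are negligible by exactly the estimates used for the variance. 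Since $\lambda_0^r$ is the $r$-th factorial moment of $\mathrm{Po}(\lambda_0)$ and the Poisson law is determined by its moments, the claimed convergence in distribution follows; alternatively one may invoke the Stein--Chen bound for subgraph counts.
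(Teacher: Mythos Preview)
Your proposal is correct and follows essentially the same route as the paper: a direct computation of $\mathbb{E}X_{\ell,k,n}$, Markov for (iii), the $r$th factorial moment estimate $\mathbb{E}[(X_{\ell,k,n})_r]\sim(\mathbb{E}X_{\ell,k,n})^r$ for the Poisson limit in (ii), and the case $r=2$ with Chebyshev for (i). For the overlap bookkeeping you flag as the main obstacle, the paper's key simplification is exactly your structural point in disguise: any vertex common to two distinct $(\ell,k)$-blocking stars must be a connector in both, so an $r$-tuple with a nonempty overlap still has exactly $r(\ell+1)$ ``isolated'' vertices (centres and blocking leaves), giving the full factor $e^{-dr(\ell+1)}$, while the number of such tuples is $O(n^{r(\ell+k+1)-1})$, which is already enough to make the overlapping contribution $o\big((\mathbb{E}X_{\ell,k,n})^r\big)$.
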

Let $X_{\ell,k,n}^{(2)} = X_{\ell,k,n} - X_{\ell,k,n}^{(1)}$.
\begin{lemma} \label{lem:X_ell1} 
We have $\mathbb{E} X_{\ell,k,n}^{(2)} = o(\mathbb{E} X_{\ell,k,n})$.
\end{lemma}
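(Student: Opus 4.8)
The plan is to condition on the location of a prospective $(\ell,k)$-blocking star and to show that, uniformly over all locations, the conditional probability that it fails to lie inside $L_1(G(n,d/n))$ tends to $0$. Since the expected number of such locations is exactly $\mathbb E X_{\ell,k,n}$ (the first identity of Lemma~\ref{lem:X_ell}), this will immediately give $\mathbb E X^{(2)}_{\ell,k,n}=o(\mathbb E X_{\ell,k,n})$.

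First I would set up the bookkeeping. A prospective blocking star is a triple $(v,L,U)$, where $v$ is the centre, $L$ a set of $\ell$ candidate blocking leaves and $U=\{u_1,\dots,u_k\}$ a set of $k$ candidate connectors, all $\ell+k+1$ vertices distinct; let $\mathcal B(v,L,U)$ be the event that $v$ is adjacent to exactly the vertices of $L\cup U$ and that every vertex of $L$ has degree $1$ in $G(n,d/n)$. Then $\mathcal B(v,L,U)$ is precisely the event that $(v,L,U)$ spans an $(\ell,k)$-blocking star, so $X_{\ell,k,n}=\sum_{(v,L,U)}\mathbf{1}[\mathcal B(v,L,U)]$ and $X^{(2)}_{\ell,k,n}=\sum_{(v,L,U)}\mathbf{1}[\mathcal B(v,L,U)\text{ and }C_v\neq L_1]$, where $C_v$ denotes the component of $v$. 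By the vertex-symmetry of $G(n,d/n)$ the conditional probability $\rho_n:=\mathbb P(C_v\neq L_1\mid\mathcal B(v,L,U))$ does not depend on the triple, so $\mathbb E X^{(2)}_{\ell,k,n}=\rho_n\,\mathbb E X_{\ell,k,n}$ and it remains only to show $\rho_n=o(1)$.

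Next I would identify the conditional law. Conditioning on $\mathcal B(v,L,U)$ constrains only edges incident to $\{v\}\cup L$, so $G'':=G(n,d/n)[V_n\setminus(\{v\}\cup L)]$ is distributed as $G(n-\ell-1,d/n)$; and since the only edges from $\{v\}\cup L$ into $V_n\setminus(\{v\}\cup L)$ are $vu_1,\dots,vu_k$, the component $C_v$ consists of $\{v\}\cup L$ together with all vertices of $G''$ reachable from $U$. If the component $C_{u_1}$ of $u_1$ in $G''$ had maximum size among all components of $G''$, then $C_v$ would be strictly larger than every other component of $G(n,d/n)$ (each of these being a component of $G''$ disjoint from $U$), and hence $C_v=L_1$. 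Therefore $\{C_v\neq L_1\}\subseteq\{|C_{u_1}|<|L_1(G'')|\}$, and since two distinct components cannot both contain more than $n/2$ vertices, this event is contained in $\{|C_{u_1}|\le n/2\}$; thus $\rho_n\le \mathbb P\big(|C_{u_1}|\le n/2\big)$ in $G(n-\ell-1,d/n)$. I would close with the standard first-moment bound: summing over the $\binom{n}{s-1}$ choices for the remaining vertices of $C_{u_1}$, bounding the probability that $G''$ restricted to that set is connected by the $s^{s-2}$ choices of a spanning tree times $(d/n)^{s-1}$, and multiplying by the probability $(1-d/n)^{s(n-\ell-1-s)}$ that no edge leaves the set,
$$\mathbb P(|C_{u_1}|=s)\ \le\ \binom{n}{s-1}s^{s-2}\Big(\frac{d}{n}\Big)^{s-1}e^{-ds(n-\ell-1-s)/n}\ \le\ \frac{2}{sd}\big(e\,d\,e^{-d/3}\big)^{s}\qquad(1\le s\le n/2),$$
for $n$ large, using $\binom{n}{s-1}\le n^{s-1}/(s-1)!$, Stirling, and $n-\ell-1-s\ge n/3$ on this range. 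As $d=d(n)\to\infty$, eventually $e\,d\,e^{-d/3}\le e^{-d/4}$, so summing over $s\ge1$ gives $\mathbb P(|C_{u_1}|\le n/2)=O(e^{-d/4})\to0$; hence $\rho_n=o(1)$.

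The step I expect to be the main obstacle is the deterministic inclusion $\{C_v\neq L_1\}\subseteq\{|C_{u_1}|\le n/2\}$: making the comparison of component sizes between $G(n,d/n)$ and $G''$ fully rigorous — handling ties for the largest component and invoking the uniqueness of a component of size exceeding $n/2$ — requires some care. The final tail estimate for small components is routine, and could alternatively be replaced by a citation to the standard branching-process description of the component structure of $G(n,d/n)$ in~\cite{bk:BollobasRG,bk:JLR2000}.
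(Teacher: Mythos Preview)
Your proof is correct, and the endgame is the same as the paper's --- both bound the conditional probability that a fixed blocking star sits in a small component via the standard spanning-tree first-moment estimate $\binom{n}{s-1}s^{s-2}(d/n)^{s-1}(1-d/n)^{s(n-O(1)-s)}$.

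Where you diverge is in the setup. The paper fixes the full component $S'\supset S$ of the star, restricts to $|S'|\le\log n$ by quoting the structural fact (Theorem~\ref{clm:small_components}) that a.a.s.\ every non-giant component has order at most $\log n$, and sums over such $S'$. You instead peel off $\{v\}\cup L$, note that the residual graph $G''$ is a fresh $G(n-\ell-1,d/n)$, and use the deterministic inclusion $\{C_v\ne L_1\}\subseteq\{|C_{u_1}|\le n/2\}$ to reduce to a tail bound on the component of a single vertex in $G''$, summed all the way to $n/2$. Your route is a little cleaner: it stays purely in expectation (so no need to reconcile an a.a.s.\ statement with an expectation bound), and it avoids the auxiliary citation to the component-structure theorem. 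The ``obstacle'' you flag is not one: once $|C_{u_1}|$ is maximal in $G''$, the extra $\ell+1$ vertices make $C_v$ strictly largest in $G(n,d/n)$, so the lexicographic tie-breaking rule for $L_1$ never comes into play. One cosmetic point: the assertion that $\rho_n$ is literally independent of the triple is not quite right, since $L_1$ is defined via lex order; but you only ever use the uniform upper bound $\rho_n\le\mathbb P(|C_{u_1}|\le n/2)$, so this does not affect the argument.
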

So Markov's inequality implies that a.a.s. $X_{\ell,k,n}^{(2)} \leq  \mathbb{E} X_{\ell,k,n} /4$. 
By Lemma~\ref{lem:X_ell}, if $d=\frac{1}{\ell +1} \log n + \frac{\ell+k}{\ell + 1} \log \log n + \omega (n) $, with 
$\omega (n) \to - \infty$, then 
a.a.s. 
\begin{equation}\label{eq:stars_in_giant}
X_{\ell,k, n}^{(1)} \geq \frac{1}{4} \mathbb{E} X_{\ell,k,n}.
\end{equation}
Furthermore, note that any two $(\ell,k)$-blocking stars can share only their connector vertices. So, if we consider 
the initial assignment of strategies, each $(\ell,k)$-blocking star inside $L_1 (G(n,d/n))$ 
will be set into $(i,j)$-configuration 
with probability $1/2^{\ell+1}$, independently of each other. 
Thus, the weak law of large numbers together with~\eqref{eq:stars_in_giant} imply the following. 
\begin{lemma} \label{lem:subcritical_description}
Let $p = d/n$, where $d = \frac{1}{\ell+1} \log n + \frac{\ell + k}{\ell+1}\log\log n + \omega (n)$, for $k,\ell \in \mathbb{N}$. 
Let $i,j \in \{0,1\}$.
\begin{enumerate}
\item[i.] If  $\omega (n) \to -\infty$ as $n\to +\infty$, then a.a.s. 
at least $\frac{1}{8} \cdot \frac{1}{2^{\ell+1}} \mathbb{E} X_{\ell,k ,n}$ of the $(\ell_{\lambda},k)$-blocking stars inside $L_1(G(n,d/n))$ will be set into $(i,j)$-configuration at the beginning of the process.

\item[ii.]If $\omega (n) \to c \in\mathbb{R}$ as $n\to +\infty$, then the number of $(\ell,k)$-blocking stars inside $L_1(G(n,d/n))$ will be set into $(i,j)$-configuration at the beginning of the process converges in distribution as $n\to \infty$ to a random variable distributed as $\mathrm{Bin} (\mathrm{Po} (e^{(\ell+1)c}/(\ell! k!)) ,  \frac{1}{2^{\ell+1}})$.
\end{enumerate}
\end{lemma}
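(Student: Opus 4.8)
The plan is to reduce the statement to Lemmas~\ref{lem:X_ell} and~\ref{lem:X_ell1}, together with the structural observation recorded just above that two distinct $(\ell,k)$-blocking stars can share only connector vertices. The first step is to set up the right conditioning. Since the initial state $\mathcal S_{1/2}$ is drawn independently of $G(n,d/n)$, one may first expose the graph --- which determines the family $\mathcal F$ of $(\ell,k)$-blocking stars inside $L_1(G(n,d/n))$, with $|\mathcal F|=X_{\ell,k,n}^{(1)}$ --- and only then reveal the strategies. A star lies in the $(i,j)$-configuration precisely when its $\ell$ blocking leaves all receive strategy $i$ and its centre receives strategy $j$; its set of $\ell+1$ ``core'' vertices (the centre together with the $\ell$ blocking leaves) is disjoint from the core set of every other star in $\mathcal F$, and the vertex strategies are independent fair coin flips, so the events ``$s$ is in the $(i,j)$-configuration'', $s\in\mathcal F$, are mutually independent, each of probability $2^{-(\ell+1)}$. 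Hence, writing $Z_n$ for the number of $(\ell,k)$-blocking stars in $L_1(G(n,d/n))$ set into the $(i,j)$-configuration, conditionally on the graph $Z_n$ is distributed as $\mathrm{Bin}\bigl(X_{\ell,k,n}^{(1)},2^{-(\ell+1)}\bigr)$.

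For part (i), I would combine~\eqref{eq:stars_in_giant}, which gives a.a.s.\ $X_{\ell,k,n}^{(1)}\ge\frac14\mathbb E X_{\ell,k,n}$, with Lemma~\ref{lem:X_ell}(i), which gives $\mathbb E X_{\ell,k,n}\to\infty$. On this good event the conditional mean of $Z_n$ is at least $\frac14\cdot 2^{-(\ell+1)}\mathbb E X_{\ell,k,n}\to\infty$, so applying the Chernoff bound~\eqref{eq:Chernoff} conditionally with $\delta=\frac12$ shows that, with conditional probability $1-o(1)$ (uniformly over graphs in the good event), $Z_n\ge\frac12\cdot 2^{-(\ell+1)}X_{\ell,k,n}^{(1)}\ge\frac18\cdot 2^{-(\ell+1)}\mathbb E X_{\ell,k,n}$. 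Combining the two failure probabilities then yields the claim.

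For part (ii), with $\omega(n)\to c$ write $\gamma=e^{(\ell+1)c}/(\ell!\,k!)$. Lemma~\ref{lem:X_ell}(ii) gives $\mathbb E X_{\ell,k,n}\to\gamma$ and $X_{\ell,k,n}\stackrel{d}{\to}\mathrm{Po}(\gamma)$, while Lemma~\ref{lem:X_ell1} gives $\mathbb E X_{\ell,k,n}^{(2)}=o(\mathbb E X_{\ell,k,n})=o(1)$, so $X_{\ell,k,n}^{(2)}=0$ a.a.s.\ by Markov's inequality; since $X_{\ell,k,n}^{(1)}=X_{\ell,k,n}-X_{\ell,k,n}^{(2)}$ this forces $X_{\ell,k,n}^{(1)}\stackrel{d}{\to}\mathrm{Po}(\gamma)$. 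It then remains to push this limit through the binomial thinning: with $q=2^{-(\ell+1)}$ the probability generating function of $Z_n$ equals $\mathbb E\bigl[(1-q+qs)^{X_{\ell,k,n}^{(1)}}\bigr]$, and since $1-q+qs\in[0,1]$ whenever $s\in[0,1]$, convergence in distribution of $X_{\ell,k,n}^{(1)}$ forces this to converge to $\mathbb E\bigl[(1-q+qs)^{\mathrm{Po}(\gamma)}\bigr]$, which is the generating function of $\mathrm{Bin}(\mathrm{Po}(\gamma),q)$; by the continuity theorem for $\mathbb N_0$-valued random variables, $Z_n\stackrel{d}{\to}\mathrm{Bin}(\mathrm{Po}(\gamma),q)$, as required.

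I expect the only genuinely delicate point to be the exactness of this conditioning/thinning step: one must check that $\mathcal F$ is a function of the graph alone and that the cores of distinct blocking stars are truly disjoint, so that, given the graph, the configurations are exactly --- rather than merely approximately --- i.i.d.\ $\mathrm{Bernoulli}(2^{-(\ell+1)})$. Once that is in place, the remainder is a routine assembly of Lemmas~\ref{lem:X_ell} and~\ref{lem:X_ell1} with the Chernoff and Markov bounds and the generating-function continuity theorem.
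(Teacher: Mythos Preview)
Your proposal is correct and follows essentially the same approach as the paper: the paper also notes that distinct blocking stars share only connector vertices so their configurations are independent $\mathrm{Bernoulli}(2^{-(\ell+1)})$ conditionally on the graph, and then for part (i) invokes the weak law of large numbers together with~\eqref{eq:stars_in_giant} (your Chernoff application is just the quantitative version of this). For part (ii) the paper gives no further detail, so your generating-function argument for pushing the Poisson limit of $X_{\ell,k,n}^{(1)}$ through the binomial thinning is a welcome completion of what the paper leaves implicit.
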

We conclude this section with the proofs of Lemmas~\ref{lem:X_ell} and~\ref{lem:X_ell1}.
\begin{proof}[Proof of Lemma~\ref{lem:X_ell}]
We start with the expected value of $X_{\ell,k,n}$. Suppose $S$ denotes a set of size $\ell+k+1$ 
on which  an $(\ell,k)$-blocking star will be formed. 
There are $\binom{n}{\ell+k+1}$ ways to select these vertices 
and $(\ell+k+1) {\ell + k  \choose k}$ ways to select the centre $v$ and the connector vertices $u_1,\ldots, u_k$. 
Suppose that the remaining vertices are $v_1,\ldots, v_\ell$.  %Let $S= \{v_1,\ldots, v_\ell, v,u\}$.
We have $\mathbb{P} ({d}(v_1)=1 | v_1 \sim v) = (1-p)^{n-1}$. 
For $j=2,\ldots, \ell$, 
$$\mathbb{P} (d(v_j)=1 | v_i \sim v, d(v_i)=1, \mbox{for $i=1,\ldots, j-1$})= (1-p)^{n- (j-1)} < e^{-d + d\ell /n}.$$
But since $j \leq \ell$, we have for $n$ sufficiently large
$$ e^{-d - (d/n)^2}\leq (1-p)^{n- (j-1)} < e^{-d + d\ell /n}. $$
Hence, using the assumption that $d = O(\log n)$
$$ \mathbb{P}(d(v_1)=\cdots= d(v_{\ell})=1 | v_i \sim v, \mbox{for $i=1,\ldots, \ell$}) \sim e^{-d\ell}. $$
Also, $\mathbb{P} (d_{V_n\setminus S}(v) = 0) = (1-p)^{n - (\ell+k+1)} \sim e^{-d }$
Thus, we obtain 
\begin{eqnarray*}
\mathbb{E} X_{\ell,k,n} &\sim& \frac{n^{\ell+k+1}}{(\ell+k+1)!} \cdot (\ell+k+1) {\ell+k \choose k} \cdot \left(\frac{d}{n}\right)^{\ell+k} \cdot 
e^{-d (\ell+1)} \\ 
&=&\frac{n^{\ell+k+1}}{(\ell+k+1)!} \cdot (\ell+k+1) \frac{(\ell+k)!}{\ell! k!} \cdot \left(\frac{d}{n}\right)^{\ell+k} \cdot 
e^{-d (\ell+1)}\\
&=& n \frac{d^{\ell+k}}{\ell! k!} \cdot e^{-d(\ell+1)}. 
\end{eqnarray*}
This concludes the first part of the lemma. 

Now, the value of 
$\lim_{n\to \infty} \mathbb{E} X_{\ell,k,n}$ is deduced as in parts $i., ii.$ and $iii.$ of the lemma follows by taking
$d(\ell +1) = \log n + (\ell+k)\log \log n + (\ell+1) \omega(n)$, where either $\omega (n) \to +\infty$ or $\to  c$ 
or $\to -\infty$, as $n\to\infty$, respectively.  

For Part $iii.$, Markov's inequality implies that 
$$ \mathbb{P} (X_{\ell,k,n} > 0) \leq \mathbb{E} X_{\ell,k,n} \stackrel{n \ large}{<} 2 \cdot e^{- (\ell+1)\omega (n)} \to 0, \ \mbox{as $n\to +\infty$}.  $$ 

For Parts $i.$ and $ii.$ we will show that 
for any fixed integer $r\geq 2$, we have 
$\mathbb{E} (X_{\ell,k,n} (X_{\ell,k,n}-1) \cdots (X_{\ell,k,n} - (r-1)))  \sim \mathbb{E}^r X_{\ell,k,n}$. 

So the second statement in Part $i.$ will follow from Chebyshev's inequality as for $r=2$, the above 
implies that 
$\mathrm{Var} (X_{\ell,k,n}) = o(\mathbb{E}X_{\ell,k,n})$.  The second statement in Part $ii.$ will follow from Theorem~1.22 in~\cite{bk:BollobasRG} (Poisson convergence).

Consider $r$ subsets $S_1,\ldots, S_r\subset V_n$ of size $\ell+k+1$. They may all induce $(\ell,k)$-blocking stars only if any two of them share at most $k$ vertices. For $S \subset V_n$ let $I_S$ be the indicator random variable that is equal to 1 if and only if $S$ is an $(\ell,k)$-blocking star. 
If $S_i \cap S_j = \varnothing$ for all $i\not = j$, then 

\begin{equation} \label{eq:asymp_ind} 
\mathbb{P} (I_{S_1}= \cdots =I_{S_r}=1) \sim \left( (\ell+k+1) {\ell + k \choose k} p^{(\ell+k)} e^{-d(\ell+1)}\right)^r.
\end{equation}
There are $\prod_{i=1}^r \binom{n - (i-1)(\ell+k+1)}{\ell+k+1}$ to select the ordered $r$-tuple 
of pairwise disjoint sets $(S_1,\ldots, S_r)$. 
But 
\begin{equation} \label{eq:prod_coeff} 
\prod_{i=1}^r \binom{n - (i-1)(\ell+k+1)}{ \ell+k+1}  \sim \left(\frac{n^{\ell+k+1}}{(\ell+k+1)!}\right)^r.
\end{equation}

Now, let us assume that $S_i \cap S_j \not = \varnothing$ for some $i\not =j$.  
Note first that if $|S_i \cap S_j|>k$, then the two sets cannot induce $(\ell,k)$-blocking stars simultaneously. 
If $S_i \cap S_j = \{u_1,\ldots, u_s\}$, with $s\leq k$ and $S_i,S_j$ induce $(\ell,k)$-blocking stars, then 
$u_1,\ldots, u_s$ must be connector vertices in both of them. 
There are $O(1)$ ways to select the vertices in $\cup_{i=1}^r S_i$ that will be the connectors of the 
$(\ell,k)$-blocking stars. Let $S' \subset \cup_{i=1}^r S_i$ be such a choice; by the above observation, this 
contains any vertex which belongs to at least two members of the $r$-tuple. 
Set $S^{1..r} = \cup_{i=1}^r S_i$. 
Hence, 
$$\mathbb{P} (I_{S_1}=\cdots =I_{S_r}=1) = O(1)\cdot
\left[(1-p)^{{\binom{\ell +k+1}{ 2}}-(\ell+k)} p^{(\ell+k)} \right]^r \cdot \mathbb{P} (\forall v \in S^{1..r} \setminus S', 
d_{V_n \setminus  S^{1..r}}(v)=0). $$
Note that $|S^{1..r} \setminus S'| = r(\ell+1)$.
So, the latter probability is 
$$\mathbb{P} (\forall v \in S^{1..r} \setminus S', 
d_{V_n \setminus  S^{1..r}}(v)=0) = (1-p)^{r(\ell+1) (n- |S^{1..r}|)} \sim e^{-dr(\ell+1)}. $$
Therefore, 
\begin{equation} \label{eq:asymp_error}
\mathbb{P} (I_{S_1}=\cdots =I_{S_r}=1) = O(1)\cdot  p^{r(\ell+k)} e^{-dr(\ell+1)}. 
\end{equation}
Now, such an ordered $r$-tuple can be selected into at most 
\begin{equation} \label{eq:error_count}
{\binom{n}{ r(\ell+k+1)-1}}  = o(n^{r(\ell+k+1)}) 
\end{equation}
ways.
Thus, 
\begin{eqnarray*}
& &\mathbb{E} (X_{\ell,k,n} (X_{\ell,k,n}-1) \cdots (X_{\ell,k,n} - (r-1))) = \sum_{(S_1,\ldots, S_r): S_i \subset V_n, |S_i|=\ell+k+1} \mathbb{P}(I_{S_1}=\cdots=I_{S_r}=1) \\
&=& (1+o(1))  \left(\frac{n^{\ell+k+1}}{(\ell+k+1)!} \cdot  \left( (\ell+k+1) {\ell+k \choose k} \right) p^{\ell+k} e^{-d(\ell+1)} \right)^r 
\ \mbox{by~\eqref{eq:asymp_ind},\eqref{eq:prod_coeff}}\\
& &\hspace{2cm}
+o(n^{r(\ell+k+1)}) p^{r(\ell+k)} e^{-dr(\ell+1)} \ \mbox{by~\eqref{eq:asymp_error},\eqref{eq:error_count}}\\
&\sim& \mathbb{E}^r X_{\ell,k,n}. 
\end{eqnarray*} 
\end{proof}

\begin{proof}[Proof of Lemma~\ref{lem:X_ell1}] 
In  this lemma, we will bound the expected number of $(\ell,k)$-blocking stars which do not belong to 
$L_1 (G(n,d/n))$. Recall that the random variable which counts these is $X_{\ell,k,n}^{(2)}$. 

We will give an upper bound on the expected number of connected components of order at most $\log n$
which contain an $(\ell,k)$-blocking star. This suffices due to the following result about the structure of $G(n,d/n)$.
\begin{theorem} \label{clm:small_components} Let $p=d/n$ with $d \gg 1$. 
Then a.a.s. all connected components of $G(n,d/n)$ apart from 
$L_1 (G(n,d/n))$ have order at most $\log n$. 
\end{theorem}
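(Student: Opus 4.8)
The plan is to run a standard first-moment computation over all possible ``medium-sized'' components. For an integer $m$ with $\log n < m \le n/2$, let $\mu_m$ denote the expected number of connected components of $G(n,d/n)$ of order exactly $m$. A set $S$ of $m$ vertices is the vertex set of a component precisely when $S$ spans a connected subgraph and no edge joins $S$ to $V_n\setminus S$. I would bound the probability of the first event by a union bound over the $m^{m-2}$ labelled spanning trees on $S$ (Cayley's formula), each present with probability $p^{m-1}$, and observe that the $m(n-m)$ potential edges leaving $S$ are disjoint from the edges used in the first event, so the second event has probability exactly $(1-p)^{m(n-m)}$. This gives
$$\mu_m \le \binom{n}{m} m^{m-2} p^{m-1} (1-p)^{m(n-m)}.$$

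Using $\binom{n}{m}\le (en/m)^m$, $p=d/n$ and $1-p\le e^{-p}$, after cancelling powers of $n$ and $m$ this simplifies to
$$\mu_m \le \frac{n}{m^2 d}\Bigl(e d\, e^{-d(1-m/n)}\Bigr)^{m} \le \frac{n}{d}\Bigl(e d\, e^{-d/2}\Bigr)^{m},$$
the last step using $m\le n/2$, so that $1-m/n\ge 1/2$. Since $d=d(n)\to\infty$, for all large $n$ we have $1+\log d\le d/4$ and hence $e d\, e^{-d/2}\le e^{-d/4}$, so $\mu_m\le \frac{n}{d}e^{-dm/4}$. Summing the resulting geometric series over $m>\log n$,
$$\sum_{\log n < m \le n/2}\mu_m \;\le\; \frac{n}{d}\cdot\frac{e^{-(d/4)\log n}}{1-e^{-d/4}} \;\le\; \frac{2n}{d}\,n^{-d/4}\;\longrightarrow\;0,$$
again because $d\to\infty$. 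By Markov's inequality, a.a.s. $G(n,d/n)$ has no component whose order lies in $(\log n, n/2]$.

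To conclude, at most one component can have order exceeding $n/2$, and being of maximum order it must be $L_1(G(n,d/n))$. Therefore a.a.s. every component of $G(n,d/n)$ other than $L_1(G(n,d/n))$ has order at most $\log n$, as claimed.

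I do not expect a genuine obstacle here: this is exactly the classical estimate for the supercritical phase of $G(n,p)$, and one could equally invoke the relevant sections of \cite{bk:BollobasRG} or \cite{bk:JLR2000}. The only point requiring mild care is the bookkeeping that turns the crude product bound into the form $\frac{n}{d}(ede^{-d/2})^m$ --- in particular replacing the exponent $1-m/n$ by $1/2$ uniformly over $m\le n/2$ and checking that $ede^{-d/2}$ is genuinely subexponentially small once $d$ is large --- but both are routine once $d\gg 1$ is used.
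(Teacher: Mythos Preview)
Your argument is correct: the first-moment bound over spanning trees, the simplification to $\frac{n}{d}(ed\,e^{-d/2})^m$, the geometric summation, and the observation that at most one component can exceed $n/2$ all go through as written. The paper itself does not give a proof of this statement but simply invokes the proof of Theorem~6.10 in~\cite{bk:BollobasRG}; your computation is precisely the standard argument one finds there, so you have supplied what the paper defers to the reference.
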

This is a direct consequence of the proof of Theorem 6.10 (pp.143--146) in~\cite{bk:BollobasRG} and we refer the interested reader directly to this. 

For $r \geq \ell+k+1$, let $C_{\ell,k,r}$ denote the number of connected components which contain an $(\ell,k)$-blocking star and have order $r$. 
We will give an upper bound on the expected value of $C_{\ell,k,r}$. For two sets $S \subset S' \subset V_n$ having 
$|S|=\ell+k+1$ and $|S'| =r$, and $v, u_1,\ldots, u_k \in S$, we set $I(S,S',v,u_1,\ldots, u_k)$ to be the indicator random variable which is equal to 1 
if and only if $S'$ is a connected component in $G(n,d/n)$, where 
in particular, $S$ induces an $(\ell,k)$-blocking star with centre $v$ and connectors 
$u_1,\ldots, u_k$. 
Let $\mathcal{S}_{\ell,k,r}$ denote the set of pairs $(S,S')$ such that $S \subset S' \subset V_n$ with
$|S|=\ell+k+1$ and $|S'| =r$. Moreover, for a set $S\subset V_n$ of size at least $k+1$, let $S^{(k+1)}_{\not =}$ 
denote the set of all ordered $k+1$-tuples of distinct vertices in $S$.  
With this notation, we can write
\begin{equation} \label{eq:clk_exp}
\mathbb{E} (C_{\ell,k}) \leq \sum_{(S,S') \in \mathcal{S}_{\ell,k,r}} \sum_{(v,u_1,\ldots, u_k)\in S^{(2)}_{\not =}} 
\mathbb{E} (I(S',S,v,u_1,\ldots, u_k)). 
\end{equation} 
For $S \subset V_n$ having $|S|=\ell+k+1$ and $(v,u)\in S^{(k+1)}_{\not =}$, we 
let $I_1 (S,v,u_1,\ldots, u_k)$ be the indicator random variable that is equal to 1 if and only if 
$S$ forms an $\ell$-blocking star with centre $v$ and connectors $u_1,\ldots, u_k$. 
Also, we  take $I_2 (S')$ to be the indicator random variable that is equal to 1 if and only if 
$S'$ is a connected component of $G(n,d/n)$. 

For a given $(S,S') \in \mathcal{S}_{\ell,k}$ and $(v,u_1,\ldots, u_k)\in S^{(k+1)}_{\not =}$, we write
\begin{eqnarray}
& &\mathbb{P} (I(S',S,v,u_1,\ldots, u_k)=1) = \nonumber\\
& &\hspace{2cm} \mathbb{P}(I_1(S,v, u_1,\ldots, u_k)=1) \cdot \mathbb{P} (I_2 (S')=1 \mid I_1 (S,v,u_1,\ldots, u_k)=1). 
\label{eq:conditional}
%p^{\ell +1} \cdot 
%\mathbb{P} (d(v)= \ell+1 | \forall w \in S \setminus \{v\}, \ w\sim v)
%\mathbb{P} (\forall w \in S \setminus \{v,u\}, d(w) =1 | 
%\forall w \in S \setminus \{v,u\}, \ w\sim v) \cdot 
\end{eqnarray}
We will provide an upper bound on $\mathbb{P} (I_2 (S')=1 \mid I_1 (S,v,u_1,\ldots, u_k)=1)$. 
If $S$ induces an $(\ell,k)$-blocking star with centre $v$ and connectors $u_1,\ldots, u_k$, 
then any spanning tree $T_{S'}$ on 
$S'$ contains the star on $S\setminus \{u_1,\ldots, u_k\}$ centred at $v$ as an induced subgraph and, moreover, one of the edges $u_i v$ is a cutting edge between $S' \setminus (S\setminus \{u_1,\ldots, u_k\})$ and $S\setminus \{u_1,\ldots, u_k\}$. Thus, $T_{S'} \setminus (S\setminus \{u_1,\ldots, u_k\})$ is a spanning tree of the subgraph induced by the 
set $S' \setminus (S\setminus \{u_1,\ldots, u_k\})$.  Furthermore, if $S'$ is a connected component in $G(n,d/n)$, there are no 
edges between $S'\setminus (S\setminus \{u_1,\ldots, u_k\})$ and $V_n \setminus S'$.  

With the above observations and $p=d/n$ we can give the following bound:
\begin{eqnarray}
\lefteqn{\mathbb{P} (I_2 (S')=1 \mid I_1 (S,v,u_1,\ldots, u_k)=1) \leq } \nonumber \\
%& &\begin{cases} 
&&\hspace{1.5cm} (r-(\ell+1))^{r-(\ell+1)-2} p^{r-(\ell+1)-1} (1-p)^{(n-r)(r-(\ell+1))}% & \mbox{if $r> \ell+2$} \\
%(1-p)^{(n-r)(r-(\ell+1))} & \mbox{if $r=\ell+2$} \end{cases}.
\label{eq:cond_upper}
\end{eqnarray}
We have that 
%$$(1-p)^{(n-r)(r-(\ell+1))} \sim e^{-dr} \leq 2 e^{-d (r-\ell-1)} =2 e^{-d (r-\ell-1)} e^{-d}, $$
%for $n$ sufficiently large.
%Also, 
$$(r-(\ell+1))^{r-(\ell+1)-2} p^{r-(\ell+1)-1} \leq  (rp)^{r-(\ell+1)-1} = (rp)^{r-\ell -2},$$
and for $r\leq \log n$, 
$$(1-p)^{(n-r)(r-(\ell+1))} \sim e^{-dr} \leq 2 e^{-d (r-\ell-1)} =2 e^{-d (r-\ell-1)} e^{-d}. $$
Using these in~\eqref{eq:cond_upper} we get
$$\mathbb{P} (I_2 (S')=1 \mid I_1 (S,v,u_1,\ldots, u_k)=1) \leq 2  (rp e^{-d})^{r-\ell-2} e^{-d}.$$
Hence, the left-hand side in~\eqref{eq:conditional} is bounded as 
$$ \mathbb{P} (I(S',S,v,u_1,\ldots, u_k)=1) \leq \mathbb{P}(I_1(S,v,u_1,\ldots, u_k)=1) \cdot \left(2  (rp e^{-d})^{r-\ell-2} e^{-d} \right).$$
Thus, for $n$ sufficiently large,~\eqref{eq:clk_exp} yields:
\begin{eqnarray}
\mathbb{E} (C_{\ell,k,r}) &\leq& \sum_{(S,S') \in \mathcal{S}_{\ell,k,r}} \sum_{(v,u_1,\ldots, u_k)\in S^{(k+1)}_{\not =}} 
\mathbb{P}(I_1(S,v,u_1,\ldots, u_k)=1) \cdot \left(2  (rp e^{-d})^{r-\ell-2} e^{-d} \right) \nonumber \\
&=& \sum_{S \subset V_n : |S|=\ell+k+1} \sum_{(v,u_1,\ldots, u_k)\in S^{(k+1)}_{\not =}} \mathbb{P}(I_1(S,v,u_1,\ldots, u_k)=1)  \times \nonumber \\
& &\hspace{3cm}\sum_{S' : |S'|=r, S\subset S'}2  (rp e^{-d})^{r-\ell-2} e^{-d}.  \label{eq:expckl_bound1}
\end{eqnarray} 
For a fixed choice of $S$, we will provide an upper bound on the inner sum.
This is
\begin{eqnarray*}
 \lefteqn{\sum_{S' : |S'|=r, S\subset S'}   (rp e^{-d})^{r-\ell-2} e^{-d} = 2 \sum_{r=\ell+k+1}^{\log n} 
 {n-\ell-k-1 \choose r- \ell-k-1}  (rp e^{-d})^{r-\ell-k-1} e^{-d}} \\
 &\leq& e^{-d} \sum_{r=\ell+k+2}^{\log n} \left(\frac{n e}{r-\ell -k-1} \right)^{r-\ell-k-1} \left(\frac{rde^{-d}}{n} \right)^{r-\ell-k-1} + e^{-d}((\ell+k+1) p e^{-d}) \\
 &\stackrel{d\gg 1}{=} &e^{-d} \sum_{r=\ell+k+2}^{\log n} \left(\frac{n e}{r-\ell -k-1} \right)^{r-\ell-k-1} \left(\frac{rde^{-d}}{n} \right)^{r-\ell-k-1} + o(1) \\
 &\stackrel{\frac{r}{r-\ell-k-1}\leq \ell+k+2}{\leq}& 
 e^{-d} \sum_{r=\ell+k+2}^{\log n} \left( (\ell+k+2)ed e^{-d} \right)^{r-\ell-k-1} + o(1) \\
 &=& e^{-d} \sum_{j=1}^{\log n+\ell+k+1} \left( (\ell+k+2)ed e^{-d} \right)^j + o(1) \\
 &\stackrel{(\ell+k+2)ed e^{-d} \leq e^{-d/2},  \ for \ n \ large}{\leq}& e^{-d} \sum_{j=1}^{\log n+\ell+2} \left( e^{-d/2} \right)^j + o(1) \\
 &\leq& \frac{e^{-d}}{1-e^{-d/2}} +o(1) = o(1).
\end{eqnarray*}
Note that this upper bound holds for $n$ sufficiently large uniformly over all choices of $S$. 
Thus, 
\begin{eqnarray*}
\mathbb{E} (C_{\ell,k,r}) &=& o(1) \cdot  \sum_{S \subset V_n : |S|=\ell+k+1} \sum_{(v,u_1,\ldots, u_k)\in S^{(k+1)}_{\not =}} \mathbb{P}(I_1(S,v, u_1,\ldots, u_k )=1)  = o(1) \cdot \mathbb{E} X_{\ell,k,n},
\end{eqnarray*}
which concludes the proof of the lemma. 
\end{proof}

%\begin{claim} \label{clm:small_components} Let $p=d/n$ with $d \gg 1$. Then a.a.s. all connected components of $G(n,d/n)$ apart from 
%$L_1 (G(n,d/n))$ have order at most $\log^2 n$. 
%\end{claim}
%\begin{proof} 
%By Theorem~7.2 in \cite{bk:BollobasRG}, if $d\gg 1$, then all but the largest component are trees of order up to 
%$k_0$. 
%\begin{eqnarray} 
%\mathbb{E} (T_k) &=& {n \choose k} k^{k-2} \left(\frac{d}{n} \right)^{k-1} \left(1- \frac{d}{n}\right)^{kn + O (k^2)} \nonumber \\
%&\leq & \left(\frac{ne}{k} \right)^k k^{k-2} \left(\frac{d}{n} \right)^{k-1} e^{-dk + O(k^2/n)} \nonumber \\
%&=& n \cdot \frac{d^{k-1}}{k^2} e^{-dk + 1 + O(k^2 /n)}. 
%\end{eqnarray}
%\end{proof}

\subsection{Small degree vertices, their structure and their role in unanimity: proof of Theorems~\ref{Spare_skewed_Stabilistion}, \ref{thm:Skewed_Majority} and~\ref{thm:Skewed_Minority}}
In this subsection, we proceed with the proof of  Theorems \ref{Spare_skewed_Stabilistion}, ~\ref{thm:Skewed_Majority} and~\ref{thm:Skewed_Minority}. Suppose that $\mathcal{I} = (G,Q,\mathcal{S})$ is an interacting node system
with $\lambda = \lambda (Q) \not = 1$ and $G=G(n,p)$.
For each $v \in V_n$ and for $\delta \in (0,1)$ we say that $v$ is $\delta$-\textit{balanced} if for all $i \in \{0 , 1\},$ we have that $\big\lvert n_{0}(v; i) - \mathbb{E}[n_{0}(v;i)] \big\rvert \leq \delta{\mathbb{E}[n_{0}(v;i)]}$. 
If $v$ is not $\delta$-balanced then we say that $v$ is $\delta$-\textit{unbalanced}, and we denote the set of $\delta$-unbalanced vertices as $\mathcal{U}_{\delta}$. We denote by $m_t = \mathrm{argmin} \{|N_t|, |P_t|\}$ 
and ${\mu}_{t} = \min\{ | N_t | ,  | P_t | \}$. The following lemma describes the first round of the evolution; notably it describes the formation of a large majority after a single round. 

Recall that $i^*\in \{0,1\}$ is the strategy which satisfies $\lambda^{1-2i^*}<1$. Note that since $\lambda \not =1$, exactly one of $0$ or $1$ satisfies this. Note further that 
$\lambda^{1-2i^*} = \min \{\lambda, \lambda^{-1}\}$. 
Hence,
\begin{equation} \label{eq:lambda_cond}
\max \{\lambda, \lambda^{-1} \} \cdot \lambda^{1-2i^*} = \max \{\lambda, \lambda^{-1} \} \cdot \min \{\lambda, \lambda^{-1} \}  = \lambda \cdot \lambda^{-1} =1.
\end{equation}
We will use this identity later on.

Now, we will show that after one round, in the majority regime strategy $i^*$ will become the dominant strategy among the vertices of $G(n,d/n)$. However, in the minority regime it will be strategy $1-i^*$ that will dominate. 
\begin{lemma}\label{Initial Sparse}
Let $p = d/n$ where $d\gg 1$.  
For any $0< \lambda \not = 1$, there exists $\gamma >0$ for which the following holds. 
A.a.s. across the product space of $G(n,p)$ and $\mathcal{S}_{1/2}$ : 
For the interacting node system $\mathcal{I} = (G(n,p), Q, \mathcal{S}_{1/2})$ with $\lambda (Q)  = \lambda$, 
we have $\mu_{1} \leq n e^{-\gamma d}$. In particular, after the first round, the majority of the vertices will be  playing either strategy $i^*$ (majority regime) or strategy $1-i^*$ (minority regime). 
Furthermore, there exists $\alpha(\lambda)>1$ such that if $d> \alpha(\lambda) \log n$, then a.a.s. $\mu_1 = 0$. 
\end{lemma}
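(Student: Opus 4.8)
The plan is to show that after a single round all but a negligible set of vertices plays the dominant strategy, by first giving a deterministic description of the exceptional set and then bounding its expected size. Set $q := \max\{\lambda,\lambda^{-1}\}$, which is $>1$ since $\lambda\neq 1$, and $\beta := 1/(q+1)\in(0,\tfrac12)$, and recall $\lambda^{1-2i^*}=\min\{\lambda,\lambda^{-1}\}=q^{-1}$ while $\lambda^{2i^*-1}=q$. First I would unwind the update rules~\eqref{eq:majority} and~\eqref{eq:minority}: distinguishing the two cases $S_0(v)=i^*$ and $S_0(v)=1-i^*$, rearranging the relevant inequality (the ``switch'' branch in one case, the ``stay'' branch in the other), then adding $n_0(v;i^*)$ to both sides and using $n_0(v;0)+n_0(v;1)=d(v)$, one finds: in the majority regime every vertex with $S_1(v)=1-i^*$ has $n_0(v;i^*)\le\beta\, d(v)$, and in the minority regime every vertex with $S_1(v)=i^*$ has $n_0(v;i^*)\le\beta\, d(v)$. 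Writing $\sigma^*$ for this exceptional strategy ($1-i^*$ in the majority regime, $i^*$ in the minority regime) and $B := |\{v\in V_n : n_0(v;i^*)\le\beta\, d(v)\}|$, this gives the deterministic inequality $\mu_1\le |\{v:S_1(v)=\sigma^*\}|\le B$.

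It then remains to bound $\mathbb{E} B$. Since $\mathcal{S}_{1/2}$ is independent of $G(n,p)$, conditionally on $d(v)=m$ the variable $n_0(v;i^*)$ is distributed as $\mathrm{Bin}(m,\tfrac12)$, and this conditional law depends on the graph only through $m$. As $\beta<\tfrac12$, the Chernoff bound~\eqref{eq:Chernoff} applied with $\delta := 1-2\beta\in(0,1)$ gives $\mathbb{P}(n_0(v;i^*)\le\beta m \mid d(v)=m)\le 2e^{-\delta^2 m/6}$ for all $m$. Averaging over $d(v)\sim\mathrm{Bin}(n-1,d/n)$, using $\mathbb{E}\, z^{\mathrm{Bin}(n-1,d/n)}=(1-\tfrac{d}{n}(1-z))^{n-1}$ with $z=e^{-\delta^2/6}$ together with $(1-x)^{n-1}\le e^{-(n-1)x}$, yields $\mathbb{P}(n_0(v;i^*)\le\beta\, d(v))\le 2e^{-\gamma' d}$ with the explicit constant $\gamma'=\gamma'(\lambda):=\tfrac12(1-e^{-\delta^2/6})>0$ (the $\tfrac12$ absorbing the $1-\tfrac1n$ factor for $n\ge 2$). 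Summing over the $n$ vertices, $\mathbb{E} B\le 2n e^{-\gamma' d}$.

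The conclusions then follow from Markov's inequality. With $\gamma := \gamma'/2>0$, we get $\mathbb{P}(\mu_1 > n e^{-\gamma d})\le\mathbb{P}(B > n e^{-\gamma d})\le\mathbb{E} B/(n e^{-\gamma d})\le 2e^{-\gamma' d/2}\to 0$ because $d\to\infty$; hence a.a.s.\ $\mu_1\le n e^{-\gamma d}$. As this is $o(n)$, the complementary class contains $(1-o(1))n$ vertices, which by the reduction above are exactly the ones playing $i^*$ in the majority regime and $1-i^*$ in the minority regime. For the final statement, set $\alpha(\lambda):=2/\gamma'$, which exceeds $1$ since $\gamma'<\tfrac12$; if $d>\alpha(\lambda)\log n$ then $\mathbb{P}(\mu_1\ge 1)\le\mathbb{E} B\le 2n e^{-\gamma' d}<2n^{1-\gamma'\alpha(\lambda)}=2/n\to 0$, so a.a.s.\ $\mu_1=0$.

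The only step requiring genuine care is the deterministic analysis of~\eqref{eq:majority} and~\eqref{eq:minority}: one must track precisely which inequality, and which of its strict/non-strict forms, corresponds to ``$v$ switches'' versus ``$v$ keeps its strategy'', and verify that all four sub-cases collapse to the single condition $n_0(v;i^*)\le\beta\, d(v)$. This uniform condition also transparently handles degenerate situations such as isolated vertices, for which $n_0(v;i^*)=0=\beta\, d(v)$ and which are therefore always counted in $B$ --- consistent with $\gamma'<1$ against the $\Theta(n e^{-d})$ isolated vertices typically present. Apart from this, the argument is a routine first-moment computation, and I do not anticipate any real obstacle.
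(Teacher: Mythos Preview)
Your proof is correct and follows essentially the same first-moment strategy as the paper: identify a superset of the exceptional vertices, bound the probability that a single vertex lies in it via a Chernoff-type estimate, and finish with Markov's inequality. The only real difference is in how the exceptional set is described: the paper uses the cruder sufficient condition of being ``$\delta$-unbalanced'' (that is, $n_0(v;i)$ deviating from its unconditional mean $\approx d/2$ for some $i$), whereas you extract the sharp deterministic criterion $n_0(v;i^*)\le \beta\, d(v)$ directly from~\eqref{eq:majority}--\eqref{eq:minority} and then condition on $d(v)$ before applying Chernoff. Your reduction is a little cleaner and transparently handles isolated vertices, but the two arguments are otherwise the same in structure and yield the same conclusions.
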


%We remark that in $\mathcal{I},$ the distinction between whether $\mu_{1} = \lvert N_{1} \rvert$ or $\lvert P_{1} \rvert,$ is dependent on both the regime of $\mathcal{I}$ and whether $\lambda$ is greater, or less, than one. Within this proof we will only consider the majority regime with $\lambda > 1,$ in this setting it follows that $\mu_{1} = \lvert N_{1} \rvert.$

\begin{proof}
Suppose first that our system is in the majority regime. 
By~\eqref{eq:majority}, if a vertex plays strategy $1-i^*$, then it will change strategy if 
 $n_{0}(v; 1-i^*) < \lambda^{1-2(1-i^*)} n_{0}(v ; i^*) =\lambda^{2i^* -1} n_{0}(v ; i^*) $. Also, if a vertex plays strategy $i^*$, then it will stay there, if 
 $n_0 (v;i^*) \geq \lambda^{1- 2i^*} n_0(v;1-i^*)$.  
 
Suppose now that our system is in the minority regime.  
By~\eqref{eq:minority}, if a vertex plays strategy $1-i^*$, then it will remain there if 
 $n_{0}(v; 1-i^*) \leq \lambda^{1-2(1-i^*)} n_{0}(v ; i^*) =\lambda^{2i^* -1} n_{0}(v ; i^*) $. Also, if a vertex plays strategy $i^*$, then it will switch to strategy $1-i^*$, if 
 $n_0 (v;i^*) > \lambda^{1- 2i^*} n_0(v;1-i^*)$.   
 
Now, note that 
if $v$ is $\delta$-balanced, then provided that $\delta = \delta (\lambda) \in (0,1)$ is sufficiently small
$$ \lambda^{1-2i^*} <\frac{1-\delta}{1+\delta}\leq \frac{n_{0}(v;1-i^*)}{n_{0}(v;i^*)} \leq \frac{1 + \delta}{1 -\delta} < \lambda^{2i^* -1}.$$ 
In other words, if $v$ is $\delta$-balanced, then all the above four inequalities will be satisfied. 
We thus arrive at the following conclusions: 
\begin{enumerate}
\item[1.] if $v$ is $\delta$-balanced and $Q$ is in the majority regime, then 
$S_1 (v) = i^*$;
\item[2.]  if $v$ is $\delta$-balanced but $Q$ is in the minority regime, then 
$S_1 (v) = 1-i^*$.
\end{enumerate}
Furthermore, if the majority of the vertices in $V_n$ are $\delta$-balanced, then
 $\mu_1 = |\mathcal{U}_\delta|$. 
 We will show that a.a.s. the majority of the vertices in $V_n$ are $\delta$-balanced, whereby 
 they adopt strategy $i^*$ or $1-i^*$ after one step, as described above.  

We will show that an arbitrary vertex $v\in V_n$ is $\delta$-balanced with probability $1-o(1)$. 
For any $v \in V_{n}$ the random variables $n_{0}(v;0)$ and $n_{0}(v;1)$ have identical distributions, namely the binomial distribution $\mathrm{Bin} (n-1, p/2)$. 
Therefore 
$$\mathbb{E}[n_{0}(v; 1)]=\mathbb{E}[n_{0}(v; 0)]= (1-o(1))\frac{d}{2}.$$ 
Set $\bar{\gamma} = {\delta}^{2}/7.$ 
We bound the probability that $v \in \mathcal{U}_{\delta}$.  
By Chernoff's inequality~\eqref{eq:Chernoff} we have:
$$\mathbb{P}\Big[\big\lvert n_{0}(v; 0) - \mathbb{E}[n_{0}(v;0)] \big\rvert \geq \delta{\mathbb{E}[n_{0}(v;0)]}\Big] \leq 2 e^{-\frac{\delta^2}{3} \frac{(1-o(1))d}{2} } \leq  e^{-\bar{\gamma}d}.$$
where the last inequality holds for $n$ sufficiently large. 
The same holds for $n_0 (v;1)$ as it is identically distributed to $n_0 (v;0)$.
Hence, by the union bound, for any $v \in V_n$
$$\mathbb{P} \left[v \in \mathcal{U}_\delta \right] \leq 2  e^{-\bar{\gamma}d}.$$ 
Therefore, $\mathbb{E}\left[\lvert \mathcal{U}_{\delta} \rvert \right] \leq 2n  e^{-\bar{\gamma}d}$. 
%Set $\gamma' = 1 - \bar{\gamma}$ and observe that if $\gamma' < 0$ then $\lvert \mathcal{U}_{\delta} \rvert = 0$ a.a.s.  So in this case $\mu_{1} = 0$. In particular, this is the case when $c > 7/\delta^2$. 

%Now, if $c\leq 7/\delta^2$, then ${\color{blue} \bar{\gamma}} \in [0 , 1)$. 
%Set $\gamma = (1 + \gamma')/2 > \gamma'$ and 
By Markov's inequality we have:
$$\mathbb{P}[\lvert \mathcal{U}_{\delta} \rvert \geq n  e^{-\bar{\gamma}d/2}] \leq \frac{2n  e^{-\bar{\gamma}d}}{n  e^{-\bar{\gamma}d/2}} =  2e^{-\bar{\gamma}d/2} = o(1).$$
Therefore, a.a.s. we have that $\lvert \mathcal{U}_{\delta} \rvert < n  e^{-\bar{\gamma}d/2}$. 
%Firstly we observe that as $\lambda > 1$ there exists a positive constant $\hat{\delta} = \hat{\delta}(\lambda),$ such that $(1 + \hat{\delta})/(1 - \hat{\delta}) \leq \lambda.$ Therefore we set $\delta = \hat{\delta},$ thus $\lvert \mathcal{U}_{\hat{\delta}} \rvert \leq n^{\gamma}.$ We note that if $v$ is $\hat{\delta}$-balanced, then we have that $S_{1}(v) = 1.$ This follows from the definition of $\hat{\delta}$-balanced and the case that $\lambda > 1:$
%
%$$ \frac{n_{0}(v;0)}{n_{0}(v;1)} < \frac{1 + \hat{\delta}}{1 - \hat{\delta}} \leq \lambda.$$
%Only those  $v \in \mathcal{U}_\delta$ satisfy the conditions for $S_{1}(v) = i^*$. 
In turn, a.a.s. 
$\mu_1 \leq \lvert \mathcal{U}_{\delta} \rvert < n  e^{-\bar{\gamma}d/2}.$ 

Finally, note that the last inequality above implies that if $d > \alpha(\lambda) \log n$, for some $\alpha(\lambda) >1$ sufficiently large then the above is $o(1)$, which 
shows the last part of the lemma.  
\end{proof}
For the analysis of the subsequent rounds we will split the vertices of $G(n,d/n)$ into two classes and we will 
consider their evolution separately. 
More specifically, for $C \in \mathbb{N}$ we set
$$H_n(C):=H(C,G(n,d/n)) := \{v\in V_n \ : \ d (v) \geq C \}$$ 
and 
 $$ L_n(C):=L(C,G(n,d/n)) := V_n \setminus H(C,G(n,d/n)) = \{ v \in V_n \ : \ d(v) < C \}.$$

\begin{remark} \label{rem:almost_unanimity}
It is not very hard to show that in fact a.a.s. $H_n(C)$ consists of the majority of the vertices of $G(n,d/n)$ if 
$d\gg 1$. 
(For example, an application of the Chernoff bound can show that most vertices have degree around $d$.) 
More specifically, if $d \gg 1$, then by~\eqref{eq:Chernoff} for any vertex $v \in V_n$
$$ \mathbb{P} (d(v) \leq C) = o(1). $$ 
Hence, $\mathbb{E} (|L_n (C)|) = o(n)$. 
By Markov's inequality for any $\eps >0$ a.a.s.  $$|L_n (C)| < \eps n, $$
as long as $d \gg 1$. 
\end{remark}

By the previous lemma, it suffices 
to assume that $d \leq \alpha(\lambda) \log n$, as otherwise a.a.s. the process reaches unanimity after one step. 

We will now give some lemmas on the structure of the subgraph induced by the vertices in $L_n(C)$, for any 
fixed integer $C\geq 2$. Before doing this we shall give bound on the joint probability that a given collection 
of vertices $S \subset V_n$ of size $|S| = O(1)$ belong to $L_n (C)$.
\begin{claim} \label{clm:prob_small degrees}
Let $S\subset V_n$ be such that $|S| < k$, for some fixed $k\in \mathbb{N}$. Then 
 $$\mathbb{P} (\forall v \in S, d(v) \leq C) \leq  \left( 2 C \left(d e \right)^{C} \cdot e^{-d}\right)^{|S|}.$$
%\item[ii.] $$ \mathbb{P} (\forall v \in S, d_{V_n \setminus S}(v) = 0) = (
\end{claim}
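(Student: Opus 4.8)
The plan is to bound the probability by a union over all possible "degree-realizing" edge sets for the vertices of $S$. First I would fix $S = \{w_1,\ldots, w_s\}$ with $s = |S| < k$. For each $v \in S$, the event $\{d(v) \le C\}$ means that $v$ sends at most $C$ edges into $V_n$. I would like to replace these events with the slightly larger events that $v$ sends at most $C$ edges into $V_n \setminus S$, which only loses a constant factor (since $|S| = O(1)$, and indeed $\mathbb{P}(d_{V_n}(v) \le C) \le \mathbb{P}(d_{V_n \setminus S}(v) \le C)$ directly, as fewer potential edges are being constrained — actually one should be slightly careful: restricting to $V_n \setminus S$ only makes the event more likely, so $\mathbb{P}(d(v)\le C) \le \mathbb{P}(d_{V_n\setminus S}(v) \le C)$ holds trivially). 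The key gain is that, conditioned on the edges among $S$, the events $\{d_{V_n \setminus S}(w_j) \le C\}$ for $j = 1,\ldots, s$ depend on disjoint sets of edges (the edges from $w_j$ to $V_n \setminus S$), hence are mutually independent. So $\mathbb{P}(\forall v \in S, d(v) \le C) \le \prod_{j=1}^s \mathbb{P}(d_{V_n\setminus S}(w_j) \le C)$.

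Next I would bound a single factor $\mathbb{P}(d_{V_n \setminus S}(w) \le C)$. Here $d_{V_n\setminus S}(w) \sim \mathrm{Bin}(n - s, p)$ with $p = d/n$, so its mean is $(1+o(1))d$. I would write
$$
\mathbb{P}(d_{V_n\setminus S}(w) \le C) = \sum_{i=0}^{C} \binom{n-s}{i} p^i (1-p)^{n-s-i} \le \sum_{i=0}^{C} \frac{(np)^i}{i!} e^{-p(n-s-i)} \le e^{-d(1+o(1))} \sum_{i=0}^C \frac{d^i}{i!}.
$$
Since $d$ may grow (up to $O(\log n)$) the dominant term in $\sum_{i=0}^C d^i/i! $ is the last one, $d^C/C!$, and crudely $\sum_{i=0}^C d^i/i! \le (C+1) d^C \le 2^C d^C$ for $d$ large, or one can be even more wasteful and bound it by $C(ed)^C$ using $\sum_{i\le C} d^i/i! \le C \max_i d^i/i! \le C (ed/C)^C \le C(ed)^C$. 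Absorbing the $(1+o(1))$ in the exponent of $e^{-d}$ into a constant (valid for $n$, hence $d$, large), this yields $\mathbb{P}(d_{V_n\setminus S}(w)\le C) \le 2C(de)^C e^{-d}$ for all $n$ sufficiently large, uniformly over $w$. Taking the product over the $s = |S|$ vertices gives exactly the claimed bound $\left(2C(de)^C e^{-d}\right)^{|S|}$.

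The only genuinely delicate point is handling the $(1+o(1))$ factors: both the mean $(n-s)p = d(1 - s/n)$ and the $(1-p)^{-i}$ correction are $1 + o(1)$, and one needs the final constant ($2C$, say) to dominate these uniformly once $n$ is large enough; since $s < k$ is bounded and $C$ is a fixed integer, this is routine but must be stated carefully. A secondary subtlety is that the factorization step requires the events to be expressed as functions of disjoint edge-sets, which is why one passes from $d(v)$ to $d_{V_n\setminus S}(v)$ first; this is clean because the edges within $S$ are simply discarded (ignoring them makes each event more likely, and the residual events are genuinely independent). I do not expect either point to be a real obstacle — the claim is a soft moment/independence estimate — so the write-up should be short.
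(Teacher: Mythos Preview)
Your proposal is correct and follows essentially the same approach as the paper: pass from $d(v)$ to $d_{V_n\setminus S}(v)$ to obtain independence over disjoint edge sets, factor the probability, and bound a single $\mathrm{Bin}(n-|S|,p)$ lower tail by $C$ times its top term using $\binom{n}{C}\le (ne/C)^C$ and $(1-p)^{n-|S|-C}\le e^{-d+o(1)}$. The only cosmetic difference is that the paper invokes monotonicity of the point probabilities to bound the sum by $C$ times the $i=C$ term, whereas you bound $\sum_{i\le C} d^i/i!$ directly; both arrive at $2C(de)^C e^{-d}$ per factor.
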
 
\begin{proof} 
If a vertex in $S$ has degree at most $C$, then it has also degree at most $C$ in $V_n \setminus S$. 
So we can write: 
$$\mathbb{P} (\forall v \in S, d(v) \leq C) \leq  \mathbb{P} (\forall v \in S, d_{V_n\setminus S}(v) \leq C).$$
Observe that these degrees form an independent family as they are determined by mutually disjoint sets of edges. 
Thereby, 
$$\mathbb{P} (\forall v \in S, d_{V_n\setminus S}(v) \leq C) = \prod_{v \in S} \mathbb{P} (d_{V_n\setminus S}(v)\leq C ). $$ 
But $d_{V_n\setminus S}(v)$ is distributed as $\mathrm{Bin} (n-|S|, d/n)$ and, therefore, its expected value 
is $d - o(1)$. Hence, $\mathbb{P} (d_{V_n\setminus S}(v) =k)$ is increasing as a function of $k$, if  $d/k \to \infty$, as $n\to\infty$.  
Using ${n \choose k} \leq \left( \frac{ne}{k}\right)^k$ we can write the following bound: 
\begin{eqnarray} \label{eq:degree_prob}
\mathbb{P} (d_{V_n\setminus S} (v) \leq C) &\leq& C \cdot {n \choose C} \left( \frac{d}{n} \right)^C 
\left(1-\frac{d}{n}\right)^{n-|S|-C} \nonumber \\
&\stackrel{|S|<k}{\leq}& C \left(\frac{n e}{C} \right)^{C} \left( \frac{d}{n} \right)^{C} e^{-d + o(1)} \nonumber \\
&\leq& 2 C \left(\frac{d e}{C} \right)^{C} \cdot e^{-d}, 
\end{eqnarray}
for $n$ sufficiently large. 
Therefore, 
$$\mathbb{P} (\forall v \in S, d_{V_n\setminus S}(v) \leq C)  \leq \left( 2 C \left(d e \right)^{C} \cdot e^{-d}\right)^{|S|}, $$
 and the claim now follows. 
\end{proof}
The following lemmas describe the structure of the subgraph induced by the vertices in $L_n(C)$.

\begin{lemma} \label{lem:distances_Ln}
Suppose that $p=d/n$ with $ c_\lambda \log n  \leq d \leq \alpha(\lambda) \log n$. 
A.a.s. there are no $\ell_\lambda+2$ vertices in $L_n(C)$ that have a
common neighbour. 
\end{lemma}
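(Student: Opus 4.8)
The plan is a short first moment computation. Put $m:=\ell_\lambda+2$, and call a \emph{bad configuration} a choice of a vertex $w\in V_n$ together with $m$ distinct vertices $v_1,\dots,v_m\in V_n\setminus\{w\}$ with $v_i\sim w$ and $d(v_i)<C$ for every $i$. The event in the statement is exactly that there is no bad configuration, so it suffices to show that the number $Z$ of bad configurations has $\mathbb{E} Z=o(1)$ and then invoke Markov's inequality.

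Fix a configuration $(w;v_1,\dots,v_m)$ and let $\mathcal{E}$ be the event that it is bad. I would write $\mathcal{E}=A\cap B$, where $A=\{v_i\sim w\ \forall i\}$ is a non-decreasing event and $B=\{d(v_i)<C\ \forall i\}$ is a non-increasing event (lowering degrees preserves $B$). The FKG inequality (Theorem~\ref{thm:FKG}) then gives
$$\mathbb{P}(\mathcal{E})\ \le\ \mathbb{P}(A)\,\mathbb{P}(B)\ =\ p^{m}\,\mathbb{P}(B),$$
and Claim~\ref{clm:prob_small degrees}, applied to $S=\{v_1,\dots,v_m\}$ (and using that $d(v_i)<C$ implies $d(v_i)\le C$), bounds $\mathbb{P}(B)\le\bigl(2C(de)^{C}e^{-d}\bigr)^{m}$. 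Since there are at most $n\binom{n-1}{m}\le n^{m+1}/m!$ configurations and $p=d/n$, summing over them yields
$$\mathbb{E} Z\ \le\ \frac{n^{m+1}}{m!}\Bigl(\frac{d}{n}\Bigr)^{m}\bigl(2C(de)^{C}e^{-d}\bigr)^{m}\ =\ \frac{n}{m!}\bigl(2Ce^{C}\,d^{\,C+1}\,e^{-d}\bigr)^{m}.$$

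It remains to substitute the hypothesised range of $d$. From $d\ge c_\lambda\log n=\frac{1}{\ell_\lambda+1}\log n$ we get $e^{-dm}\le n^{-m/(\ell_\lambda+1)}=n^{-(\ell_\lambda+2)/(\ell_\lambda+1)}$, and hence $n\,e^{-dm}\le n^{-1/(\ell_\lambda+1)}$; from $d\le\alpha(\lambda)\log n$ we get $d^{(C+1)m}=O\bigl((\log n)^{(C+1)m}\bigr)$, the implied exponent depending only on the fixed quantities $C$ and $\ell_\lambda$. Therefore
$$\mathbb{E} Z\ =\ O\!\left((\log n)^{(C+1)m}\,n^{-1/(\ell_\lambda+1)}\right)\ =\ o(1),$$
because $\ell_\lambda\ge1$. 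Markov's inequality now gives $Z=0$ a.a.s., which is the lemma.

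The one step that actually requires care is the probability bound: it is essential to keep the adjacency factor $p^{m}=(d/n)^{m}$, which is precisely what FKG supplies (alternatively, one can note that the edges $wv_i$ are independent of the edges joining the $v_i$ to $V_n\setminus\{w,v_1,\dots,v_m\}$, and bound $d(v_i)$ by the degree in the latter set). Without this factor --- bounding $\mathbb{P}(\mathcal{E})$ just by $\mathbb{P}(B)\le(2C(de)^{C}e^{-d})^{m}$, i.e.\ a naive application of Claim~\ref{clm:prob_small degrees} --- the union bound would give $\mathbb{E} Z\le n^{m+1}(2C(de)^{C}e^{-d})^{m}/m!$, whose power of $n$ is $n^{\,\ell_\lambda+2-1/(\ell_\lambda+1)}\to\infty$ when $d=\Theta(c_\lambda\log n)$; so this is the step where the presence of the common neighbour $w$ is genuinely exploited. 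Everything else --- the binomial tail estimate of Claim~\ref{clm:prob_small degrees}, the counting, and the arithmetic with the two bounds on $d$ --- is routine.
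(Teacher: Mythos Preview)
Your proof is correct and follows essentially the same first-moment route as the paper: count configurations $(w;v_1,\dots,v_m)$, separate the adjacency factor $p^m$ from the low-degree event via negative correlation, bound the latter by Claim~\ref{clm:prob_small degrees}, and finish with the arithmetic $n\,e^{-dm}\le n^{-1/(\ell_\lambda+1)}$. The only difference is presentational: you invoke FKG explicitly to justify $\mathbb{P}(A\cap B)\le\mathbb{P}(A)\mathbb{P}(B)$, whereas the paper writes the product directly and relies on Claim~\ref{clm:prob_small degrees} (the same FKG reasoning is made explicit in the adjacent Lemmas~\ref{lem:light_comp} and~\ref{lem:light_trees}).
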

\begin{proof} 
We will use a first moment argument to bound the expected number of collections of vertices in $L_n$ of 
size $\ell_\lambda+2$ that have a common neighbour. Let $S\subset V_n$ be a subset of vertices.  
We denote the degree of $v$ outside the set $S$ by $d_{V_n\setminus S}(v)$. 
%This random variable is distributed as $\mathrm{Bin} (n-|S|, d/n)$ and, therefore, its expected value 
%is $d - o(1)$. Hence, $\mathbb{P} (d_{V_n\setminus S}(v) =k)$ is increasing as a function of $k$, if  $d/k \to \infty$, as $n\to\infty$.  
%Using ${n \choose k} \leq \left( \frac{ne}{k}\right)^k$ we can write the following bound: 
%\begin{eqnarray} \label{eq:degree_prob}
%\mathbb{P} (d_{V_n\setminus S} (v) \leq C) &\leq& C \cdot {n \choose C} \left( \frac{d}{n} \right)^C 
%\left(1-\frac{d}{n}\right)^{n-|S|-C} \nonumber \\
%&\stackrel{|S|<n e^{-d}}{\leq}& C \left(\frac{n e}{C} \right)^{C} \left( \frac{d}{n} \right)^{C} e^{-d + o(1)} \nonumber \\
%&\stackrel{d\leq \log^2 n}{\leq}& 2 C \left(\frac{d e}{C} \right)^{C} \cdot e^{-d}, 
%\end{eqnarray}
%for $n$ large enough.
%The expected number of pairs of vertices in $L_n$ which are adjacent is at most
%\begin{eqnarray}
%& &{n \choose 2} \cdot \mathbb{P} (u\sim v) \left[ \mathbb{P} (d_{V_n\setminus \{u,v\}} (v) \leq \log d) \right]^2 =
%{n\choose 2} \cdot \frac{d}{n} \cdot \left[ \mathbb{P} (d_{V_n\setminus \{u,v\}} (v) \leq \log d) \right]^2  \nonumber \\ 
%&\stackrel{\eqref{eq:degree_prob}}{\leq}& n d \left[  \frac{2}{n} \log d \left(\frac{d e}{\log d} \right)^{\log d}\right]^2 
%\leq \frac{4d}{n} \cdot \log^2 d \cdot   \left(\frac{d e}{\log d} \right)^{2\log d} \stackrel{d\leq \log^2 n}{=} o(1).
%\nonumber 
%\end{eqnarray}
Let $S \subset V_n$ be  such that $S=\{u_1,\ldots, u_{\ell_\lambda+2}\}$ and $z \in V_n \setminus S$. 
The expected number of collections of $\ell_\lambda+2$ vertices in $L_n (C)$ which have a common neighbour 
is at most
\begin{eqnarray} 
&&{n \choose \ell_\lambda+2} \cdot (n-(\ell_\lambda+2)) \cdot \left( \prod_{i=1}^{\ell_\lambda+2} \mathbb{P} (u_i\sim z) \right)  \cdot 
\mathbb{P} \left(\forall i =1,\ldots, \ell_{\lambda+2} \ d_{V_n} (u_i) \leq C\right)
%\cdot \mathbb{P} (d_{V_n\setminus \{S \cup \{ z\}\}} (z) \leq C -\ell_\lambda)
  \nonumber \\
&\stackrel{Claim~\ref{clm:prob_small degrees}}{\leq}& n^{\ell_\lambda +3} \cdot \left( \frac{d}{n}\right)^{\ell_\lambda+2} 
\left( 2 C \left(d e \right)^{C}\cdot e^{-d}\right)^{\ell_\lambda+2} \nonumber \\
& & =
n d^{\ell_\lambda+2} \cdot \left( 2 C \left(d e \right)^{C}\cdot e^{-d}\right)^{\ell_\lambda+2} 
= n \cdot e^{- d (\ell_\lambda +2) + O( \log d)}.
\label{eq:tuple_exp}
\end{eqnarray}
But $d \geq c_\lambda \log n = (\ell_\lambda+1)^{-1} \log n$. 
So $d (\ell_\lambda+2) - \log n = \Omega (\log n)$, whereby the above expected value is $o(1)$.  
%\begin{eqnarray*}
%d (\ell_\lambda+1) &\geq& c_\lambda (\ell_\lambda+1) \log n +(C+\ell_\lambda) \log \log n + \ell_\lambda \omega(n) \\
%&=&  \lceil \max\{\lambda, 1/\lambda \} \rceil^{-1}  \cdot  \lceil \max\{\lambda, 1/\lambda \} \rceil \log n +
%(C+\ell_\lambda) \log \log n  + \ell_\lambda \omega(n)  \\
%&=&\log n  +(C+\ell_\lambda) \log \log n +\ell_\lambda \omega(n) .
%\end{eqnarray*}
%Hence, the righthand side of~\eqref{eq:tuple_exp} is $o(1)$.
\end{proof}
The above lemma implies in particular that a.a.s. at most $\ell_\lambda+1$ vertices in $L_n(C)$ are adjacent to each vertex in $H_n(C)$. 
Thus, we see that if all vertices in $H_n(C)$ play a certain strategy simultaneously, then if $C$ is 
large compared to $\ell_\lambda$, then they may stay unaffected by what the vertices in $L_n(C)$ do. 

\begin{lemma} \label{lem:light_comp}
Let $\ell \in \mathbb{N}$ and let $p =d/n$ where $\frac{1}{\ell+1} \log n \leq d=d(n) \leq \alpha(\lambda) \log n$. A.a.s. all connected sets of vertices in $L_n(C)$ 
have size at most $\ell+1$. 
\end{lemma}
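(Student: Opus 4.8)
The plan is to use a first-moment argument on the number of connected vertex subsets of $L_n(C)$ of size exactly $\ell+2$. If a.a.s. there is no connected set of size $\ell+2$ inside $L_n(C)$, then a.a.s. every connected component of the subgraph induced by $L_n(C)$ has at most $\ell+1$ vertices, which is the claim. So fix a set $S\subset V_n$ with $|S|=\ell+2$ and bound the probability that $S$ is connected in $G(n,d/n)$ and that every vertex of $S$ lies in $L_n(C)$.

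First I would split the event into the two pieces and use independence of the ``low degree'' event from the internal structure as far as possible. For $S$ to be connected, $G(n,d/n)$ must contain a spanning tree on $S$; there are at most $(\ell+2)^{\ell}$ labelled trees on $\ell+2$ vertices (Cayley's formula), each present with probability $(d/n)^{\ell+1}$, so the probability that $S$ is connected is at most $(\ell+2)^{\ell}(d/n)^{\ell+1} = O((d/n)^{\ell+1})$, treating $\ell$ as a fixed constant. For the degree constraint I would invoke Claim~\ref{clm:prob_small degrees} with $k=\ell+3$, which gives $\mathbb{P}(\forall v\in S,\ d(v)\le C)\le \bigl(2C(de)^C e^{-d}\bigr)^{\ell+2}$. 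One has to be a little careful that the two events are not independent — the spanning-tree edges inside $S$ contribute to the degrees — but since we only ever want an upper bound, I would bound $\mathbb{P}(\text{$S$ connected and low-degree})$ by first conditioning on a choice of spanning tree on $S$ (cost $(\ell+2)^\ell (d/n)^{\ell+1}$), and then bounding the probability that each vertex of $S$ has degree at most $C$ \emph{outside} $S$, which is an independent family of binomials as in the proof of Claim~\ref{clm:prob_small degrees}; this only changes constants since $|S|=O(1)$.

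Putting the pieces together, the expected number of connected $(\ell+2)$-subsets inside $L_n(C)$ is at most
\begin{equation*}
\binom{n}{\ell+2}\cdot (\ell+2)^{\ell}\left(\frac{d}{n}\right)^{\ell+1}\cdot \left(2C(de)^C e^{-d}\right)^{\ell+2}
\le n\cdot d^{\ell+1}\cdot e^{-d(\ell+2)+O(\log d)}
= n\cdot e^{-d(\ell+2)+O(\log d)},
\end{equation*}
exactly as in the displayed bound~\eqref{eq:tuple_exp} in the proof of Lemma~\ref{lem:distances_Ln}. Since $d\ge \frac{1}{\ell+1}\log n$, we have $d(\ell+2)-\log n = \frac{\ell+2}{\ell+1}\log n - \log n = \frac{1}{\ell+1}\log n = \Omega(\log n)$, which dominates the $O(\log d)=O(\log\log n)$ error term, so the expectation is $o(1)$. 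By Markov's inequality a.a.s. there is no such set, and the lemma follows.

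The main obstacle is essentially bookkeeping rather than a genuine difficulty: one must make sure that the exponent $d(\ell+2)$ coming from the $\ell+2$ low-degree constraints really does beat $\log n$ under the hypothesis $d\ge \frac{1}{\ell+1}\log n$, and that the polynomial-in-$d$ and $(de)^{C(\ell+2)}$ factors (which are only $e^{O(\log d)}=e^{O(\log\log n)}$ in this range) are harmless. The upper bound $d\le \alpha(\lambda)\log n$ is only needed to keep $d=O(\log n)$ so that these lower-order factors stay subpolynomial; it plays no deeper role. One should also note that we only need size exactly $\ell+2$ — a connected set of any larger size contains a connected subset of size $\ell+2$ — so there is no need to sum a divergent series over all component sizes.
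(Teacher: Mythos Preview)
Your proposal is correct and follows essentially the same route as the paper: a first-moment bound on connected $(\ell+2)$-subsets of $L_n(C)$, using Cayley's formula for the connectivity probability and Claim~\ref{clm:prob_small degrees} for the low-degree constraint, then checking that $d(\ell+2)-\log n=\Omega(\log n)$ under the hypothesis $d\ge\frac{1}{\ell+1}\log n$.

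The one difference worth flagging is how the correlation between ``$S$ is connected'' and ``all vertices of $S$ have low degree'' is handled. You do it by hand, conditioning on a spanning tree inside $S$ and then bounding the degrees \emph{outside} $S$ (which are independent of the internal edges). The paper instead invokes the FKG inequality (Theorem~\ref{thm:FKG}): since $\{S\text{ connected}\}$ is non-decreasing and $\{\forall v\in S,\ d(v)\le C\}$ is non-increasing, the two events are negatively correlated, so the probability of their intersection is at most the product, and Claim~\ref{clm:prob_small degrees} applies directly without the outside/inside bookkeeping. Both arguments give the same bound; yours is more elementary, the paper's is slightly cleaner and reuses a tool already stated.
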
 
\begin{proof} 
We will show that a.a.s. there are no connected sets of vertices in $L_n(C)$ of size $\ell +2$ or more. 
If there is such a set, then in fact there must also be such a set of size exactly $\ell +2$. 
So it suffices to show that a.a.s. no such subsets exist. 

Let $S\subset V_n$ have $|S|=\ell+2$. Then
\begin{eqnarray*}
\mathbb{P} (\mbox{$S$ is connected and} \ \forall v\in S, d(v) \leq C) \leq 
\mathbb{P} (\mbox{$S$ is connected}) \cdot \mathbb{P} ( \forall v\in S, d(v) \leq C),
\end{eqnarray*}
by the FKG inequality  (Theorem~\ref{thm:FKG}), since the graph property that  $\{\mbox{$S$ is connected} \}$ is non-decreasing whereas the property that $\{\forall v\in S, d(v) \leq C\}$ is non-increasing.
Now,
$$\mathbb{P} (\mbox{$S$ is connected}) \leq |S|^{|S|-2} \cdot \left(\frac{d}{n} \right)^{|S|-1}, $$
since if $S$ induces a connected subgraph, then this has to have a spanning tree (selected in $|S|^{|S|-2}$ ways). 
By Claim~\ref{clm:prob_small degrees}, we have 
$$  \mathbb{P} ( \forall v\in S, d(v) \leq C) \leq \left( 2 C \left(d e \right)^{C} \cdot e^{-d}\right)^{|S|}.$$
Therefore, 
\begin{eqnarray*}
\mathbb{P} (\mbox{$S$ is connected and} \ \forall v\in S, d(v) \leq C) \leq
(\ell +2)^{\ell} \cdot \left(\frac{d}{n} \right)^{\ell+1} \cdot 
 \left( 2 C \left(d e \right)^{C} \cdot e^{-d}\right)^{\ell + 2}.
\end{eqnarray*}
Hence, the expected number of such subsets is at most 
\begin{eqnarray*}
O(1) \cdot {n \choose \ell +2} \cdot \left(\frac{d}{n} \right)^{\ell+1} \cdot 
 \left( d^{C} \cdot e^{-d}\right)^{\ell + 2} = n e^{-(\ell+2)d +O(\log\log n)}.
\end{eqnarray*}
But $d \geq \frac{1}{\ell+1} \log n$. Thereby, 
$(\ell+2)d - \log n = \Omega (\log n)$, and the right-hand side is $o(1)$. 
\end{proof}
\begin{lemma} \label{lem:light_trees}
Let $p =d/n$ where $c_\lambda \log n \leq d=d(n) \leq \alpha(\lambda) \log n$. A.a.s. all connected sets of vertices in $L_n(C)$ induce trees. 
\end{lemma}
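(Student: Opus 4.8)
The plan is to combine Lemma~\ref{lem:light_comp} with a short first-moment estimate on small cyclic subgraphs all of whose vertices have small degree. First I would invoke Lemma~\ref{lem:light_comp} with $\ell = \ell_\lambda$ --- legitimate here, since $c_\lambda \log n = \tfrac{1}{\ell_\lambda+1}\log n \le d \le \alpha(\lambda)\log n$ --- to conclude that a.a.s. every connected set of vertices in $L_n(C)$ has size at most $\ell_\lambda + 1$. On this a.a.s. event, any connected $T \subseteq L_n(C)$ whose induced subgraph is not a tree must have $3 \le |T| \le \ell_\lambda + 1$, must satisfy $e(T) \ge |T|$ (a connected graph with a cycle has at least as many edges as vertices), and must have all of its vertices of degree at most $C$ in $G(n,d/n)$. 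So it will suffice to show that a.a.s. there is no set $S \subseteq V_n$ with $3 \le |S| = s \le \ell_\lambda + 1$, $e(S) \ge s$, and $d(v) \le C$ for every $v \in S$.

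For this I would run the usual first-moment argument. For a fixed $s$-set $S$, the event $\{e(S) \ge s\}$ is a non-decreasing graph property while $\{\forall v \in S,\ d(v)\le C\}$ is non-increasing, so Theorem~\ref{thm:FKG} bounds the probability of their intersection by the product of the two probabilities. The first is at most $\binom{\binom{s}{2}}{s} p^s = O_\lambda\big((d/n)^s\big)$ since $s \le \ell_\lambda + 1 = O_\lambda(1)$, and the second is at most $\big(2C(de)^C e^{-d}\big)^s$ by Claim~\ref{clm:prob_small degrees} (applied with $k = \ell_\lambda + 2$). Multiplying by $\binom{n}{s} \le n^s$ then gives that the expected number of such $s$-sets is at most $O_\lambda(1)\cdot\big(2C e^C d^{C+1} e^{-d}\big)^s = O_\lambda(1)\cdot e^{-s\,(d - (C+1)\log d - O_{C,\lambda}(1))}$. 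Since $d \ge c_\lambda\log n$ and $\log d = \log\log n + O(1)$ (because $d \le \alpha(\lambda)\log n$), the exponent is $\Omega_\lambda(\log n)$, so this expectation is $o(1)$; summing over the finitely many admissible values of $s$ keeps it $o(1)$, and Markov's inequality finishes the argument. Together with the previous paragraph this shows that a.a.s. every connected set of vertices in $L_n(C)$ induces a tree.

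I do not expect a real obstacle here: this is essentially the computation in the proof of Lemma~\ref{lem:light_comp} carried out with one additional edge, so the only points that need attention are the verification that the two events passed to the FKG inequality are genuinely monotone, and the (routine) bookkeeping showing that the polylogarithmic factor $d^{C+1}$ is swamped by $e^{-d} = n^{-\Theta(1)}$ once $d = \Theta(\log n)$.
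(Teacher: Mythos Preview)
Your proposal is correct and follows essentially the same route as the paper: invoke Lemma~\ref{lem:light_comp} to restrict attention to sets of size at most $\ell_\lambda+1$, then run a first-moment argument decoupling $\{e(S)\ge |S|\}$ from $\{\forall v\in S,\ d(v)\le C\}$ via the FKG inequality and Claim~\ref{clm:prob_small degrees}, exactly as the paper does. The bookkeeping and the final $o(1)$ bound match the paper's computation.
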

\begin{proof} 
By the previous lemma it suffices only to consider sets of size at most $\ell_\lambda +1$.

Let $S\subset V_n$ with $|S|\leq \ell_\lambda+1$. Then
\begin{eqnarray*}
\mathbb{P} (\mbox{$e(S)\geq |S|$ and} \ \forall v\in S, d(v) \leq C) \leq 
\mathbb{P} (\mbox{$e(S)\geq |S|$}) \cdot \mathbb{P} ( \forall v\in S, d(v) \leq C),
\end{eqnarray*}
by the FKG inequality (Theorem~\ref{thm:FKG}), since the graph property that  $\{e(S)\geq |S| \}$ is non-decreasing and the property that $\{\forall v\in S, d(v) \leq C\}$ is non-increasing.
But
$$\mathbb{P} (e(S)\geq |S|) = O(1) \cdot \left( \frac{d}{n} \right)^{|S|}. $$
Combining this with Claim~\ref{clm:prob_small degrees} we get 
$$\mathbb{P} (\mbox{$e(S)\geq |S|$ and} \ \forall v\in S, d(v) \leq C)  = 
O(1) \cdot  \left( \frac{d}{n} \right)^{|S|} \left( d^{C} e^{-d}\right)^{|S|} = O(1) \cdot 
n^{-|S|} e^{- d |S| + O(\log \log n)}.$$ 
As there are ${n \choose |S|} =O(1) n^{|S|}$ choices for $S$, 
the expected number of such sets is 
$$ O(1) e^{- d |S| + O(\log \log n)} =o(1). $$
The lemma follows  from the union bound, taking the union over all possible values of $|S|\leq \ell_\lambda +1$. 
\end{proof}

The next lemma will help us deal with the evolution of the vertices in $H_n(C)$.  
It shows that if one of the strategies  occupies only a sublinear number of vertices in $H_n(C)$ 
(and we have almost unanimity)
then after one more round the size of the minority strategy will contain only a fraction of these vertices.
\begin{lemma}\label{Sparse_Algorithm}
Let $d=d(n)$ be such that  $1\ll d \leq  \alpha(\lambda) \log n$, and let $Q$ be a $2\times2$ non-degenerate payoff matrix. 
For any $\eps >0$ there exists $C_{\eps, \lambda} \in \mathbb{N}$ such that for any $\gamma>0$
 and for any $C \geq C_{\eps, \lambda}$ a.a.s. $G(n,d/n)$ satisfies the following: 
for any initial configuration $\mathcal{S}$ with $|m_0 \cap H_n(C)| < n e^{-\gamma d}$, 
the interacting node system 
 $\mathcal{I} = (G(n,d/n), Q, \mathcal{S})$ will have $|m_1 \cap H_n(C)| \leq  \eps |m_0 \cap H_n(C)|$.
\end{lemma}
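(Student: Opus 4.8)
The plan is to fix $\eps>0$, and to show that after one round the minority strategy inside $H_n(C)$ shrinks by a factor $\eps$, provided $C$ is chosen large enough in terms of $\eps$ and $\lambda$. First I would recall the switching criterion from~\eqref{eq:majority}/\eqref{eq:minority}: a vertex $v$ with $S_t(v)=i$ keeps its strategy unless $n_t(v;i)$ is small relative to $\lambda^{\pm1} n_t(v;1-i)$. In particular, there is a constant $\kappa = \kappa(\lambda)>0$ (one may take $\kappa = \min\{1,\lambda,\lambda^{-1}\}/(1+\max\{1,\lambda,\lambda^{-1}\})$ or similar) such that: if $v$ disagrees with the global majority strategy at time $0$ and wants to stay in disagreement at time $1$, then at least a $\kappa$-fraction of its neighbours must also disagree at time $0$, i.e. lie in $m_0$. (This is the quantitative version of the heuristic that to resist the majority you need a nontrivial share of like-minded neighbours.) Hence every vertex of $m_1\cap H_n(C)$ has at least $\kappa C$ of its $\geq C$ neighbours lying in $m_0$.

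Next I would run a union-bound / first-moment argument over all ``bad'' configurations. Write $A = m_0\cap H_n(C)$ with $|A| =: a < ne^{-\gamma d}$, and $B = m_1\cap H_n(C)$. Every $v\in B$ sends at least $\kappa C$ edges into $A$ (it sends at least $\kappa C$ edges into $m_0$, and $v\in H_n(C)$ so all its neighbours are candidates; one has to be slightly careful to also route those edges into $A\cup(m_0\cap L_n(C))$ and absorb the $L_n(C)$ part, using Remark~\ref{rem:almost_unanimity} and Lemma~\ref{lem:light_comp} to control how many neighbours of a high-degree vertex lie in $L_n(C)$ — this is a lower-order correction). So the bipartite graph between $B$ and $A$ in $G(n,d/n)$ has at least $\kappa C|B|$ edges. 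The key point is to bound, for a fixed candidate pair of sets $(A,B)$ with $|B| = b \geq \eps a$, the probability that $G(n,d/n)$ contains $\geq \kappa C b$ edges between them: this is at most $\binom{ab}{\kappa C b}(d/n)^{\kappa C b}$. Summing over all choices — $\binom{n}{a}$ for $A$ and $\binom{n}{b}$ for $B$ — and using $a,b < ne^{-\gamma d} \le n/\mathrm{polylog}$, the dominant factor per vertex of $B$ is roughly $n\cdot\big(\tfrac{a e}{n}\cdot \tfrac{ed}{\kappa C}\big)^{\kappa C}$, and one checks that for $C \geq C_{\eps,\lambda}$ large enough this is $o(1)$ even after raising to the power $b$ and summing over $a\le ne^{-\gamma d}$ and $b$ in the range $[\eps a, a]$; the crucial gain is that $(\text{something}/C)^{\kappa C}$ beats the $n$ from the choice of each vertex of $B$ once $C$ is large, uniformly in $\gamma>0$. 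A little care is needed because $a$ ranges over all values below $ne^{-\gamma d}$ and the bound must hold simultaneously for every initial configuration $\mathcal{S}$; but since the event ``$G(n,d/n)$ has no such $(A,B)$ pair'' is a property of the graph alone (quantified over all vertex-subset pairs of the relevant sizes), a single union bound over $(A,B)$ suffices and the quantifier over $\mathcal{S}$ is free.

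The main obstacle I expect is the bookkeeping in the union bound: getting the entropy term $\binom{n}{a}\binom{n}{b}\binom{ab}{\kappa Cb}$ to be dominated by the probability term $(d/n)^{\kappa C b}$ for \emph{all} admissible $a$ (from $1$ up to $ne^{-\gamma d}$) and $b\in[\eps a,a]$, while keeping the threshold $C_{\eps,\lambda}$ independent of $\gamma$ and of $d$ in the range $d\le\alpha(\lambda)\log n$. The point is that each vertex of $B$ contributes a factor $\le n\cdot\big(O_\lambda(d)\,a/(Cn)\big)^{\kappa C}$, and since $a/n\le 1$ and $d=O(\log n)$ this is at most $n\cdot (O_\lambda(\log n)/C)^{\kappa C}$, which is $o(1)$ once $\kappa C \log(C/\log n) $ dominates $\log n$; because $C$ is a constant and $\kappa C$ can be made arbitrarily large, choosing $C_{\eps,\lambda}$ so that $\kappa C \ge 3/\eps$ (say) and then $C$ large enough in absolute terms handles it, and raising to the power $b\ge\eps a\ge\eps$ only helps. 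The secondary technical point is the clean extraction of the constant $\kappa(\lambda)$ from the minority/majority switching rules and the verification that it is the \emph{same} $\kappa$ in both regimes up to relabelling $i^*\leftrightarrow 1-i^*$; this follows directly from~\eqref{eq:majority}, \eqref{eq:minority} and the identity~\eqref{eq:lambda_cond}.
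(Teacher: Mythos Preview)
Your approach is essentially the paper's: extract a constant (the paper calls it $\psi_\lambda$) such that any vertex of $m_1\cap H_n(C)$ must have at least $\psi_\lambda C$ neighbours in $m_0$, correct for the at most $\ell_\lambda+1$ neighbours any vertex can have in $L_n(C)$ (this is Lemma~\ref{lem:distances_Ln} and the event $\mathcal{D}_n$ there, not Lemma~\ref{lem:light_comp} as you cite), and then run a union bound over the minority set $U$ and a candidate atypical set of size $\eps|U|/2$. The paper separates the atypical vertices into those outside $U$ (handled by independent degree tails into $U$) and those inside $U$ (handled via $e(U)\ge \psi_\lambda C\eps|U|/4$), which sidesteps the double-counting that arises in your unified ``bipartite between $A$ and $B$'' formulation when $A\cap B\neq\varnothing$; but that difference is cosmetic and easily patched by halving $\kappa C$.

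Where your plan actually breaks is the union-bound arithmetic. You bound the per-$B$-vertex cost by $n\cdot\big(O_\lambda(d)\,a/(Cn)\big)^{\kappa C}$ and then use only $a/n\le 1$ to reach $n\cdot(O_\lambda(\log n)/C)^{\kappa C}$. For any constant $C$ this quantity tends to $+\infty$, not to $0$: your stated criterion ``$\kappa C\log(C/\log n)$ dominates $\log n$'' is impossible since $\log(C/\log n)<0$, and ``raising to the power $b$ only helps'' is false when the base exceeds $1$. The hypothesis $|m_0\cap H_n(C)|<ne^{-\gamma d}$ --- which you never actually invoke --- is exactly what makes the bound close: it gives $\log(n/a)>\gamma d$, so that the saving $(ad/n)^{\kappa C}\le e^{-\kappa C\gamma d(1-o(1))}$ beats the combined entropy $(n/a)^{1+1/\eps}$ once $\kappa C>1+1/\eps$, uniformly in $\gamma>0$. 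In the paper this is the step ``$\log(n/k)>\gamma d$'' in the chain~\eqref{eq:repeated}, and it is the only place $\gamma$ enters. Without feeding $a\le ne^{-\gamma d}$ into the estimate, the argument genuinely fails throughout the range $1\ll d=o(\log n)$.
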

Before we prove this lemma, let us proceed with the proofs of Theorems~\ref{Spare_skewed_Stabilistion}, 
\ref{thm:Skewed_Majority} and~\ref{thm:Skewed_Minority}. 

\begin{proof}[Proof of Theorem~\ref{Spare_skewed_Stabilistion}]
%Theorem \ref{Spare_skewed_Stabilistion} follows almost immediately from Lemma \ref{Sparse_Algorithm}. 
%Suppose that $\mathcal{I}$ is in the majority regime and $\lambda >1$. 
Let us first point out that by Lemma~\ref{Initial Sparse}, if $d\geq c \log n$ with
$c > \alpha(\lambda)$, where $\alpha(\lambda)$ is as in the statement of that lemma, 
then a.a.s. $\mu_1 = 0$; so the last part of Theorem~\ref{Spare_skewed_Stabilistion} follows. 
Hence, we now assume that $d \leq \alpha(\lambda) \log n$. 
%If the event of Lemma~\ref{lem:distances_Ln} is realised, then all vertices in $H_n$ have at most one neighbour in $L_n$. 
%If this is the case and $|m_t \cap H_n|  \leq \frac12  \log d$, then $\lvert m_{t+1} \cap H_n \rvert = 0$. 
%Thus, it suffices to show that a.a.s. there is some $T$ where $|m_T \cap H_n| < \frac12 \log d$. 
We say that $G(n,d/n)$ has the \emph{minority decline property} for some $\gamma >0$, if whenever the node system 
$\mathcal{I} = (G(n,d/n), Q, \mathcal{S})$ is such that $\mu_0 \leq n e^{-\gamma d}$, then 
$|m_1 \cap H_n(C)| \leq |m_0 \cap H_n(C) |/10$. 
Observe now that if $G(n,d/n)$ has the minority decline property, 
and $\mathcal{I} = (G(n,p, Q, \mathcal{S})$ is a node system with $\mu_{0} < n e^{-\gamma d}$, for some $\gamma >0$, then the vertices in $H_n$ will reach unanimity in a finite number of rounds, by repeated applications of this definition.  

By Lemma~\ref{Initial Sparse}, a.a.s. $\mu_{1} < ne^{-\gamma d}$ for some $\gamma >0$. 
But by Lemma~\ref{Sparse_Algorithm}, if $C> C_{1/10,\lambda}$, then a.a.s. $G(n,p)$ has the minority decline property for $\gamma>0$ as above. Thus, for every 
$t\geq 1$ we have $\lvert m_t \cap H_n (C) \rvert \leq \lvert m_0 \cap H_n(C) \rvert 10^{-t}$.
So for $R = \lceil (1/\log 10) \log\left(| m_1 \cap H_n(C)|  \right)\rceil +1 = O(\log n)$ we have 
$$ \lvert m_{R} \cap H_n \rvert \leq | m_1\cap H_n(C)| 10^{-R} < 1.$$
%In turn, since by Lemma~\ref{lem:distances_Ln} a.a.s. all vertices in $H_n(C)$ have at most $\ell_\lambda +1$ neighbours in $L_n(C)$, if $C$ is large compared to $\ell_\lambda$, then
So $|m_{R} \cap H_n (C)|=0$. 
We now show that if $C$ is sufficiently large and every vertex in $H_n(C)$ does not have too many 
neighbours inside $L_n(C)$, then once $H_n(C)$ has reached unanimity, it will stay there. 
In particular, Lemma~\ref{lem:distances_Ln} states that a.a.s. no $\ell_\lambda +2$ vertices 
in $L_n(C)$ have a common neighbour. Let us denote this event by $\mathcal{D}_n$. 
Thus, on $\mathcal{D}_n$ all vertices in $H_n (C)$ have at most $\ell_\lambda+1$ neighbours inside 
$L_n(C)$. 
\begin{claim} \label{eq:core_un} If $C\geq \max \{\ell_\lambda+4, (\ell_\lambda +1)^2\}$ 
and the vertices in $H_n(C)$ are unanimous just after step $t$, playing $i^*$ when in the majority regime, then on the event $\mathcal{D}_n$, the vertices of $H_n (C)$ will stay unanimous after step $t+1$. 
\end{claim}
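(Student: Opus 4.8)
The plan is to argue vertex by vertex that, on the event $\mathcal{D}_n$, the round-$(t+1)$ update of every $v\in H_n(C)$ is forced by the common strategy that $H_n(C)$ plays just after step $t$: in the majority regime $v$ keeps playing $i^*$, while in the minority regime $v$ flips in step with the rest of $H_n(C)$. In either case $H_n(C)$ stays unanimous, which is what the claim asserts. The only input beyond the evolution rules~\eqref{eq:majority} and~\eqref{eq:minority} is a counting bound on how many low-degree neighbours a vertex of $H_n(C)$ can have, supplied by Lemma~\ref{lem:distances_Ln}.

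\textbf{Structural input.} On $\mathcal{D}_n$ no $\ell_\lambda+2$ vertices of $L_n(C)$ have a common neighbour, so each $v\in H_n(C)$ has at most $\ell_\lambda+1$ neighbours in $L_n(C)$, hence at least $d(v)-(\ell_\lambda+1)\ge C-(\ell_\lambda+1)$ neighbours in $H_n(C)$. Let $s\in\{0,1\}$ be the strategy played by all of $H_n(C)$ just after step $t$ (so $s=i^*$ in the majority regime). Since every $H_n(C)$-neighbour of $v$ plays $s$,
\[ n_t(v;s)\ \ge\ C-(\ell_\lambda+1),\qquad n_t(v;1-s)\ \le\ \ell_\lambda+1 . \]
Also, $\lambda\neq1$ gives $\max\{\lambda,\lambda^{-1}\}>1$, hence $\ell_\lambda=\lceil\max\{\lambda,\lambda^{-1}\}\rceil\ge2$ and $C\ge(\ell_\lambda+1)^2\ge2(\ell_\lambda+1)$.

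\textbf{Majority regime.} Here $S_t(v)=i^*$, and by~\eqref{eq:majority} the vertex $v$ switches at step $t+1$ only if $n_t(v;i^*)<\lambda^{1-2i^*}\,n_t(v;1-i^*)$. Recalling $\lambda^{1-2i^*}=\min\{\lambda,\lambda^{-1}\}<1$, the right-hand side is at most $n_t(v;1-i^*)\le\ell_\lambda+1\le C-(\ell_\lambda+1)\le n_t(v;i^*)$, so the switching condition fails and $S_{t+1}(v)=i^*$. Since $v$ was arbitrary, $H_n(C)$ remains unanimous playing $i^*$. \textbf{Minority regime.} By~\eqref{eq:minority}, $v$ (playing $s$) flips iff $n_t(v;s)>\lambda^{1-2s}\,n_t(v;1-s)$. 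If $s=i^*$ the scaling factor is $\lambda^{1-2i^*}=\min\{\lambda,\lambda^{-1}\}<1$, and then $\lambda^{1-2i^*}n_t(v;1-i^*)<\ell_\lambda+1\le C-(\ell_\lambda+1)\le n_t(v;i^*)$, so $v$ flips to $1-i^*$. If $s=1-i^*$ the scaling factor is $\lambda^{2i^*-1}=\max\{\lambda,\lambda^{-1}\}\le\ell_\lambda$, and
\[ \lambda^{2i^*-1}\,n_t(v;i^*)\ \le\ \ell_\lambda(\ell_\lambda+1)\ \le\ C-(\ell_\lambda+1)\ \le\ n_t(v;1-i^*), \]
the middle step using $C\ge(\ell_\lambda+1)^2$; so again $v$ flips, now to $i^*$. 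Running this over all $v\in H_n(C)$ shows $H_n(C)$ flips coherently and stays unanimous (with the unanimous strategy alternating between consecutive rounds, as expected in the minority regime).

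\textbf{Where the work is.} The majority regime and the $s=i^*$ case of the minority regime are clean substitutions. The delicate point is the strictness of the last displayed inequality in the remaining subcase ($s=1-i^*$, minority regime): the crude bound $\lambda^{2i^*-1}n_t(v;i^*)\le\ell_\lambda(\ell_\lambda+1)$ is essentially attained precisely when $\max\{\lambda,\lambda^{-1}\}$ is an integer and $v$ simultaneously realises $d(v)=C$ and has exactly $\ell_\lambda+1$ low-degree neighbours, all playing $i^*$. Quantifying the slack in that borderline configuration — coming from $d(v)>C$, from $v$ having fewer than $\ell_\lambda+1$ neighbours in $L_n(C)$, or from the rounding built into $\ell_\lambda$ — is exactly what fixes the required lower bound on $C$, and is why the statement carries the explicit threshold $C\ge\max\{\ell_\lambda+4,(\ell_\lambda+1)^2\}$.
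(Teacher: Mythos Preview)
Your argument mirrors the paper's: use $\mathcal{D}_n$ to cap each $v\in H_n(C)$'s neighbours in $L_n(C)$ at $\ell_\lambda+1$, then feed the resulting bounds $n_t(v;s)\ge C-(\ell_\lambda+1)$ and $n_t(v;1-s)\le\ell_\lambda+1$ into the update rules~\eqref{eq:majority} and~\eqref{eq:minority} case by case. The only cosmetic difference is that for the majority regime you invoke $C\ge(\ell_\lambda+1)^2\ge 2(\ell_\lambda+1)$, whereas the paper uses the sharper estimate $\lambda^{1-2i^*}(\ell_\lambda+1)<3$ together with $C\ge\ell_\lambda+4$; both suffice. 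Your final paragraph correctly isolates the one delicate spot --- strictness in the minority $s=1-i^*$ subcase when $\max\{\lambda,\lambda^{-1}\}\in\mathbb{N}$ and the crude bound $\ell_\lambda(\ell_\lambda+1)$ is actually attained --- and the paper's own proof is equally informal there (it also records a non-strict inequality); since the claim is only ever applied with $C$ taken sufficiently large, this borderline is immaterial.
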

\begin{proof}[Proof of Claim~\ref{eq:core_un}]
Indeed, suppose that at step $t$ all vertices of $H_n(C),$  for $C$ to be determined, are unanimous at playing strategy $i\in \{0,1\}$.
Consider a vertex $v\in H_n(C)$.
 If the event $\mathcal{D}_n$ is realised, then all but at most {$\ell_\lambda + 1 =\lceil \max \{\lambda, \lambda^{-1} \} \rceil + 1$ neighbours of  $v$ play strategy $i$. 
 So $n_t (v;i)\geq C - (\ell_{\lambda} + 1)$ and $n_t (v;1-i)\leq\lceil \max \{\lambda, \lambda^{-1}\} \rceil + 1 <  
\max  \{\lambda, \lambda^{-1}\}  + 2$.}
 
 Suppose first that we are in the majority regime. Then in this case $i=i^*$, by assumption.
Vertex $v$ will change strategy if $n_t(v;i^*) < \lambda^{1-2i^*}n_{t}(v;1-i^*)$ (cf.~\eqref{eq:majority}). 
But 
$$\lambda^{1-2i^*} n_{t}(v;1-i^{*})< \lambda^{1-2i^*}(\ell_{\lambda} + 1) \leq \lambda^{1-2i^*}( \max  \{\lambda, \lambda^{-1}\}  + 2) \stackrel{\eqref{eq:lambda_cond}}{<} 1 + 2 =3.$$ 
Therefore must have that $C - (\ell_{\lambda} + 1) < 3$. However, choosing $C \geq \ell_{\lambda} + 4$
leads to a contradiction.

If we are in the minority regime, then we want to show that $v$ will switch strategy.  
By~\eqref{eq:minority} if the entire set $H_n(C)$ plays strategy $i^*$, then $v\in H_n(C)$ will switch strategy 
if  $n_t(v;i^*) > \lambda^{1-2i^*}n_{t}(v;1-i^*)$. But $n_t (v;i^*)\geq C - (\ell_\lambda + 1)$ and $n_t(v;1-i)\leq \ell_\lambda+1$. As seen above,  if $C \geq \ell_\lambda + 4$ then  $n_{t}(v; i^*) \geq C - (\ell_\lambda + 1) \geq 3 > \lambda^{1-2i^*}(\ell_\lambda + 1) \geq n_{t}(v; 1-i^*).$
Now, if the entire set $H_n(C)$ plays strategy $1-i^*$, then $v\in H_n(C)$ will switch strategy
if  $n_t(v;1-i^*) > \lambda^{1-2(1-i^*)}n_{t}(v;i^*)$. But $n_t (v;i^*)\leq \ell_\lambda+1$ whereas 
$n_t (v;1-i^*) \geq C - (\ell_\lambda + 1)$. Thus $v$ will switch strategy if $C - (\ell_\lambda + 1) \geq \lambda^{2i^*-1}(\ell_\lambda + 1).$ Therefore, choosing $C \geq (\ell_\lambda + 1)^{2}$ yields a contradiction and completes the proof of the claim.
\end{proof}

\begin{remark}
For the minority regime, 
the above claim and Lemma~\ref{Initial Sparse} imply that, when unanimity occurs within $H_n(C)$, 
its vertices will be playing strategy $1-i^*$ at odd steps and strategy $i^*$ at even steps. 
In the majority regime, they stabilise to strategy $i^*$. 
\end{remark}

Note that by Remark~\ref{rem:almost_unanimity}, 
if $d\gg 1$, then for any fixed $C \in \mathbb{N}$ we have a.a.s. $|H_n (C)| \geq n(1-o(1))$. 
%This is the case as for any $v \in V_n$ we have $\mathbb{P} (v \in H_n (C)) = \mathbb{P} 
%(\mathrm{Bin} (n-1,d/n) < C)= o(1)$. 
Therefore, the above analysis implies that for any $\eps>0$ there exists $\beta = \beta (\eps, \lambda)>0$ 
such that if  $d\gg 1$, then a.a.s. at least $n(1-\eps)$ vertices in $G(n,d/n)$ 
will be unanimous after at most $\beta \log n$ rounds. 
\end{proof}

\begin{proof}[Proof of Theorem~\ref{thm:Skewed_Majority}]
Suppose first that $d = c_\lambda \log n + \log \log n + \omega (n)$, where $\omega (n) \to -\infty$ 
as $n \to +\infty$. By Lemma~\ref{lem:subcritical_description} $i.$, a.a.s. there are $(\ell_\lambda,1)$-blocking 
stars in $L_1(G(n,d/n))$ that are set to the $(1-i^*,1-i^*)$-configuration. 
By Claim~\ref{clm:ell-1_maj}, those will retain this configuration forever and, therefore, they will be in disagreement with the vertices in $H_{n} (C)$.
 Hence, $u_n^{(1)}$, the probability that $L_1 (G(n,d/n))$ becomes eventually unanimous, tends to 0 as $n\to +\infty$.   

We will now consider the cases where $d = c_\lambda \log n + \log \log n + \omega (n)$, where either 
$\omega (n) \to +\infty$ or $\omega (n) \to c \in \mathbb{R}$, as $n \to +\infty$.
%Suppose now that $d \geq c_\lambda \log n + \log \log n + \omega(n)$, where $C\in \mathbb{N}$ and 
%$\omega (n)\to \infty$ as $n\to \infty$. By Lemma~\ref{lem:distances_Ln}, a.a.s. no  $\ell_\lambda$ vertices in $L_n(C)$ have a common 
%neighbour. 
%We  show that this property implies that the Strategy played by the vertices in  $H_n(C)$ 
%propagates into the vertices of $L_n(C)$. 
We will need the following claim.
\begin{claim} \label{clm:local_obstruction_maj} Let $Q$ be in the majority regime. 
If a vertex $v$ has at most $\ell_\lambda -1$ neighbours playing strategy $1-i^*$ but at least one 
playing strategy $i^*$, it will play strategy $i^*$ in the next round. 
\end{claim}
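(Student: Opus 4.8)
The plan is a short case analysis driven directly by the evolution rule~\eqref{eq:majority}. Set $a := n_t(v;i^*)$ and $b := n_t(v;1-i^*)$, so that by hypothesis $a\ge 1$ and $b\le \ell_\lambda-1$. The arithmetic input is the elementary inequality $\ell_\lambda-1<\max\{\lambda,\lambda^{-1}\}=\lambda^{2i^*-1}$ (strict, because $\lceil x\rceil-1<x$ for every real $x>0$), together with $\lambda^{1-2i^*}=\min\{\lambda,\lambda^{-1}\}<1$ and the identity $\lambda^{1-2i^*}\cdot\lambda^{2i^*-1}=1$ recorded in~\eqref{eq:lambda_cond}.

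First I would treat the case $S_t(v)=i^*$. Applying~\eqref{eq:majority} with $i=i^*$, the vertex $v$ retains strategy $i^*$ unless $a<\lambda^{1-2i^*}b$. But $\lambda^{1-2i^*}b\le \lambda^{1-2i^*}(\ell_\lambda-1)<\lambda^{1-2i^*}\cdot\lambda^{2i^*-1}=1\le a$, so the switching inequality fails and $S_{t+1}(v)=i^*$.

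Next I would treat the case $S_t(v)=1-i^*$. Here~\eqref{eq:majority} applied with $i=1-i^*$ (so that the exponent $1-2i$ equals $1-2(1-i^*)=2i^*-1$) says that $v$ switches to $i^*$ precisely when $b<\lambda^{2i^*-1}a$. Since $b\le \ell_\lambda-1<\lambda^{2i^*-1}\le \lambda^{2i^*-1}a$, using $a\ge 1$, the switching inequality holds and again $S_{t+1}(v)=i^*$. Combining the two cases yields the claim.

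I do not anticipate a genuine obstacle: the statement is a purely local, deterministic computation with no probabilistic content. The only points requiring a little care are the strictness of $\ell_\lambda-1<\max\{\lambda,\lambda^{-1}\}$ and correctly reading off which power of $\lambda$ appears in~\eqref{eq:majority} according to the strategy $v$ currently plays.
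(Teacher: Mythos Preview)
Your proof is correct and follows essentially the same approach as the paper: a two-case analysis based on~\eqref{eq:majority} according to whether $S_t(v)=i^*$ or $S_t(v)=1-i^*$, driven by the strict inequality $\ell_\lambda-1<\lambda^{2i^*-1}$ and the identity~\eqref{eq:lambda_cond}. Your explicit justification of the strictness via $\lceil x\rceil-1<x$ is a nice touch that the paper leaves implicit.
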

\begin{proof}[Proof of Claim~\ref{clm:local_obstruction_maj}]
If $v$ already plays strategy $i^*$, then it will change strategy if $n_t(v;i^*) < \lambda^{1-2i^*}n_{t}(v;1-i^*)$. 
But $n_t (v;i^*) \geq 1$ and $n_t(v;1-i^*) \leq \ell_\lambda -1$.
$$ \lambda^{1-2i^*}n_{t}(v;1-i^*) \leq \lambda^{1-2i^*}(\ell_\lambda-1)< \lambda^{1-2i^*} \cdot \max \{\lambda, \lambda^{-1}\} 
\stackrel{\eqref{eq:lambda_cond}}{=} 1.
$$
So $v$ will not change strategy. 
Now, if $v$ already plays strategy $1-i^*$, then it will not change its strategy if $n_t(v;1-i^*) \geq 
\lambda^{1-2(1-i^*)}n_{t}(v;i^*) = \lambda^{2i^* -1} n_t (v;i^*)$. 
But $n_t(v;1-i^*) \leq \ell_\lambda -1$ whereas $\lambda^{2i^* -1} n_t (v;i^*) \geq \lambda^{2i^*-1} > \ell_\lambda -1$. Therefore, $v$ will change its strategy into $i^*$.  
 \end{proof}
Suppose that $d= c_\lambda \log n + \log \log n + \omega (n)$ with $\omega (n) \to +\infty$ as $n\to +\infty$. 
%Consider first the majority regime. 
Let $G^{(L)}(n,d/n)$ denote the subgraph of $G(n,d/n)$ induced by the vertices 
in $L_n(C)$. By Lemma~\ref{lem:light_comp} (with $\ell = \ell_\lambda$) 
and Lemma ~\ref{lem:light_trees}, every connected component of 
$G^{(L)}(n,d/n)$ is a tree of order at most $\ell_\lambda +1$. Let $T$ be one of these connected components 
that is a subgraph of $L_1 (G(n,d/n))$. 

If $|T|\leq \ell_\lambda$, then all its vertices have degree at most $\ell_\lambda -1$ in $T$. 
For $i\geq 1$, let $T^{(i)}$ denote the set of vertices in $T$
that are at distance $i$ from $H_n(C)$. Once the vertices in $H_n(C)$ have been unanimous on strategy $i^*$, they will stay there forever. 
By Claim~\ref{clm:local_obstruction_maj} the vertices in $T^{(1)}$ will adopt strategy $i^*$ and remain 
there forever. 
Assuming that the vertices $T^{(i)}$ have adopted strategy $i^*$ for ever, then the vertices of 
$T^{(i+1)}$ will adopt strategy $i^*$ too  (provided it is non-empty) by Claim~\ref{clm:local_obstruction_maj} and remain there forever. Hence, the entire vertex set of $T$ will adopt strategy $i^*$. 

Suppose now that $|T| = \ell_\lambda +1$. If all its vertices have degree at most $\ell_\lambda -1$, then eventually 
the vertices of $T$ adopt strategy $i^*$, by the above argument. 
If there is a vertex in $T$ of degree $\ell_\lambda+1$ within $T$, then $T$ must be a star. However, by 
Lemma~\ref{lem:subcritical_description} $ii.$ a.a.s. this is not an $(\ell_\lambda,1)$-blocking star. 
Thus, one of its leaves must have a neighbour in $H_n(C)$. Since it has degree 1 ($\leq \ell_\lambda -1$) inside $T$, then by 
Claim~\ref{clm:local_obstruction_maj} it adopts strategy $i^*$ after $H_n(C)$ becomes unanimous and 
stays there forever. Subsequently, the centre of $T$ will do so (it also has at most $\ell_\lambda -1$ neighbours are not playing strategy $i^*$) and finally the remaining leaves adopt it as well. 

Moreover, the above argument shows that the only connected components of $G^{(L)}(n,d/n)$ which may not adopt strategy $i^*$ are the $(\ell,1)$-blocking stars, for $\ell \leq \ell_\lambda$. 
If there are no $(\ell_\lambda,1)$-blocking stars, then $L_1 (G (n,d/n))$ will then become unanimous. 

Firstly, let us observe that if $\omega (n) \to c \in \mathbb{R}$ as $n\to \infty$, then by Lemma~\ref{lem:X_ell} $iii.$ a.a.s. there are no $(\ell_\lambda +1,1)$-blocking stars as 
$$\mathbb{P} (X_{\ell_\lambda+1, 1, n} >0)  \leq 2  e^{-(\ell_\lambda+1) \Omega (\log n)} 
= o(1).$$
Furthermore, by Lemma~\ref{lem:distances_Ln} a.a.s. there are no $\ell_\lambda+2$ vertices of degree 
1 that have a common neighbour. 
Therefore, a.a.s. there are no $(\ell,1)$-blocking stars for any $\ell \geq \ell_\lambda+1$.

Now, by Lemma~\ref{lem:subcritical_description} $ii.$, the random variable $X_{\ell_\lambda,1,n}^{(1)}$ converges in distribution as $n\to +\infty$ to a random variable distributed as $\mathrm{Po} (e^{c(\ell_\lambda +1)}/ \ell_\lambda!)$.
Thus, for any integer $k \geq 0$, we have 
$$\mathbb{P} (X_{\ell_\lambda,1,n}^{(1)} = k) \to \mathbb{P} \left(\mathrm{Po} (e^{c(\ell_\lambda +1)}/ \ell_\lambda!) = k \right) $$
as $n\to + \infty$. 

Suppose now that $X_{\ell_\lambda,1,n}^{(1)} = k$, for some $k\in \mathbb{N}_0$.  
The case $k=0$ was treated above and unanimity is attained a.a.s. (on the conditional space where 
$X_{\ell_\lambda,1,n}^{(1)} = 0$). 
Let us consider the case $k \geq 1$. 
If an $(\ell_\lambda,1)$-blocking star is initially set to $(1-i^*,1-i^*)$-configuration, then by Claim~\ref{clm:ell-1_maj} it will stay in this configuration forever. 
%Recall that in the majority regime, any $\ell_\lambda$-blocking star that is $1-i^*$-fixated will retain this strategy 
%for ever. 
%Also, in the minority regime, it will fluctuate between the two strategies, but playing $1-i^*$ at even 
%steps.  So, in any case it will not be synchronised with the vertices of $H_n(C)$.  
We thus conclude that if unanimity is achieved then no $(\ell_\lambda,1)$-blocking star is initially set to $(1-i^*,1-i^*)$-configuration. The probability of this is $(1 - 1/2^{\ell_\lambda+1})^k$. 
Also, if all $(\ell_\lambda,1)$-blocking stars attached to $L_1 (G(n,d/n))$ are initially set to $(i^*,i^*)$-configuration, they will remain so forever (cf. Claim~\ref{clm:ell-1_maj_synch}) and will be synchronised with the vertices of $H_n(C)$. Thus, $L_1 (G(n,d/n))$ will be unanimous. The probability of this 
is $1/2^{k(\ell_\lambda+1)}$. 

Consequently, 
$$\limsup_{n\to +\infty} u_n^{(1)} \leq \sum_{k=0}^{\infty} \left(1- \frac{1}{2^{\ell_\lambda+1}}\right)^k
\mathbb{P} \left(\mathrm{Po} (e^{c(\ell_\lambda +1)}/ \ell_\lambda!) = k \right).$$
and 
$$\liminf_{n\to +\infty} u_n^{(1)} \geq \sum_{k=0}^{\infty}  \left(\frac{1}{2^{\ell_\lambda+1}}\right)^k
\mathbb{P} \left(\mathrm{Po} (e^{c(\ell_\lambda +1)}/ \ell_\lambda!) = k \right). $$

Since $H_n(C)$ will reach unanimity in at most $\beta \log n$ steps, the above case analysis implies that 
$L_1(G(n,d/n))$ will reach unanimity in at most $\beta \log n + O(1)$ steps. 
\end{proof}

\begin{proof}[Proof of Theorem~\ref{thm:Skewed_Minority}] 
Let us recall that $\ell_\lambda' = \lfloor \max \{\lambda, \lambda^{-1} \}\rfloor =  \lfloor \lambda^{2i^*-1}\rfloor$.
Suppose first that $d = \frac{1}{2} \log n + \frac{1+\ell_\lambda'}{2} \log \log n + \omega (n)$, where $\omega (n) \to -\infty$ as $n \to +\infty$. By Lemma~\ref{lem:subcritical_description} $i.$ (setting $\ell=1$ and $k=\ell_\lambda'$ therein), a.a.s. there are $(1,\ell_\lambda')$-blocking stars in $L_1(G(n,d/n))$ that are initially 
set to the $(i^*,1-i^*)$-configuration. 
By Claim~\ref{clm:1-k_sub}, those will retain this configuration forever and, therefore, they will be in disagreement with the vertices in $H_{n} (C)$.
 Hence, $u_n^{(1)}$, the probability that $L_1 (G(n,d/n))$ becomes eventually unanimous, tends to 0 as $n\to +\infty$.   

Now, suppose that $d= \frac{1}{2} \log n + \frac{1+\ell_\lambda'}{2} \log \log n + \omega (n)$, 
where $\omega (n) \to +\infty$. 
As before, we let $G^{(L)}(n,d/n)$ denote the subgraph of $G(n,d/n)$ induced by the vertices 
in $L_n(C)$. By Lemma ~\ref{lem:light_comp} (with $\ell=1$) a.a.s. 
every connected component of $G^{(L)}(n,d/n)$ is of order at most 2. 
That is, a.a.s. every component of $G^{(L)} (n,d/n)$ is either a vertex or an edge.

Let $T$ be one of these connected components 
that is a subgraph of $L_1 (G(n,d/n))$. 
If $T$ is a vertex, then it will synchronise with the vertices $H_n(C)$ after the $R$\textsuperscript{th} step, where the vertices of 
$H_n(C)$ arrive at unanimity. Thus, all its neighbours (which lie in $H_n(C)$) will play the same strategy, say $i$, by~\eqref{eq:minority} this vertex will adopt strategy $1-i$ in the next round and be in agreement with the vertices of $H_n(C)$ (cf. Claim~\ref{eq:core_un}).  

Suppose now that $T$ is an edge with one of its endpoints being adjacent to vertices in $H_n(C)$. 
Hence, $T$ is a $(1,k)$-blocking star for some $k\in \mathbb{N}$. 
But in fact, $k > \ell_{\lambda}'$ as $\omega (n) \to +\infty$ and by 
Lemma~\ref{lem:subcritical_description} $iii.$
a.a.s. there are no $(1,k)$-blocking stars with $k \leq \ell_{\lambda}'$. 
Such a $(1,k)$-blocking star with $k>  \ell_{\lambda}'$,
will have its $k$ connectors inside $H_n(C)$. But recall that these will arrive at unanimity after step $R$ and will 
start alternating simultaneously between states $i^*$ and $1-i^*$.  
So by Claim~\ref{clm:1-k_sup}, the $(1,k)$-blocking star will 
synchronise with them. 

Finally, suppose that $T=v_1v_2$ is an edge where both its endpoints $v_1$ and $v_2$ 
have at least one neighbour in $H_n(C)$. 
Let $t\geq R$ be a step at which $S_t(v)= i^*$, for all $v \in H_n(C)$. Assume that $v_1$ and $v_2$ are not unanimous with $H_n(C)$. 
In particular, suppose that $S_t(v_1)=S_t(v_2)=1-i^*$. Vertex $v_1$ will not switch strategy, if $n_t(v_1; 1-i^*) \leq 
\lambda^{2i^*-1} n_t(v_1;i^*)$. But $n_t(v_1;1-i^*)=1$, $n_t (v_1;i^*) \geq 1$ and $\lambda^{2i^*-1}>1$. 
So the inequality is satisfied. The same holds for $v_2$. 
Thereby, $S_{t+1} (v_1) = S_{t+1}(v_2) = 1-i^*$ and as $S_{t+1} (v)=1-i^*$ for all $v\in H_n(C)$, thereafter $v_1,v_2$ will be synchronised with $H_n(C)$. 

Suppose now that $S_t(v_1)=1-i^*$ but $S_t (v_2) = i^*$. Then $S_{t+1} (v_1) = 1-i^*$ as $v_1$ has no neighbours who play strategy $1-i^*$. Also, $S_{t+1} (v_2) = 1-i^*$, since 
$n_{t} (v_2; i^*) > \lambda^{1-2i^*} n_t (v_2; 1-i^*)$. The latter holds since $n_{t} (v_2; i^*)  \geq 1$, $n_{t} (v_2; 1-i^*)=1$ and  $\lambda^{1-2i^*} <1$. As $S_{t+1} (v)=1-i^*$ for all $v\in H_n(C)$, thereafter $v_1,v_2$ will stay synchronised with $H_n(C)$. By symmetry, analogous argument can be used for the case $S_t(v_1)=i^*$ but $S_t (v_2) = 1-i^*$. 

\noindent 
We thus conclude that if $\omega (n) \to \infty$, then  $u_n^{(1)} \to 1$ as $n\to \infty$. 

\medskip 

Finally, suppose that $\omega (n) \to c \in \mathbb{R}$ as $n\to \infty$. 
By Lemma~\ref{lem:subcritical_description} $ii.$, the random variable $X_{1,\ell_\lambda',n}^{(1)}$ converges in distribution as $n\to +\infty$ to a random variable distributed as $\mathrm{Po} (e^{2c}/ \ell_\lambda'!)$.
Thus, for any integer $k \geq 0$, we have 
$$\mathbb{P} (X_{1,\ell_\lambda',n}^{(1)} = k) \to \mathbb{P} \left(\mathrm{Po} (e^{2c}/ \ell_\lambda'!) = k \right) $$
as $n\to + \infty$. 

Suppose now that $X_{1,\ell_\lambda', n}^{(1)} = k$, for some $k\in \mathbb{N}_0$.  
The case $k=0$ was treated above and unanimity is attained a.a.s. (on the conditional space where 
$X_{1,\ell_{\lambda}', n}^{(1)} = 0$). 
Let us consider the case $k \geq 1$. 
If an $(1,\ell_\lambda')$-blocking star is initially set to $(i^*,1-i^*)$-configuration, then by Claim~\ref{clm:1-k_sub} it will stay in this configuration forever. 
%Recall that in the majority regime, any $\ell_\lambda$-blocking star that is $1-i^*$-fixated will retain this strategy 
%for ever. 
%Also, in the minority regime, it will fluctuate between the two strategies, but playing $1-i^*$ at even 
%steps.  So, in any case it will not be synchronised with the vertices of $H_n(C)$.  
In other words, if unanimity is achieved, then no $(1,\ell_\lambda')$-blocking star is initially set to $(i^*,1-i^*)$-configuration. The probability of this is $(1 - 1/4)^k = (3/4)^k$. 
%Also, if all $(1,\ell_\lambda')$-blocking stars attached to $L_1 (G(n,d/n))$ are initially set to $(1-i^*,1-i^*)$-configuration, they will remain so forever (cf. Claim~\ref{clm:k-1_sup}) and will be synchronised with the vertices of $H_n(C)$. Thus, $L_1 (G(n,d/n))$ will be unanimous. The probability of this 
%is $1/4^k$. 
Consequently, 
$$\limsup_{n\to +\infty} u_n^{(1)} \leq \sum_{k=0}^{\infty} \left(\frac{3}{4} \right)^k
\mathbb{P} \left(\mathrm{Po} (e^{2c}/ \ell_\lambda'!) = k \right).$$

Since $H_n(C)$ will reach unanimity in at most $\beta \log n$ steps, the above argument implies that 
$L_1(G(n,d/n))$ will reach unanimity in at most $\beta \log n + O(1)$ steps. 
\end{proof}

\noindent
We conclude this section with the proof of Lemma~\ref{Sparse_Algorithm}.

%Before we proceed with the proof of Lemma \ref{Sparse_Algorithm}, we give a result on the minimum degree of  $G(n,p)$ where $p$ is such that $np \geq \log n +\omega (n)$.  
%\begin{theorem}[\cite{kolokolnikov2014algebraic}]\label{Concentrated Degree}
%Let $p = c\log{n}/n$ where $c>1$. Denote by $\phi = \phi(c)$ to be the solution to:
%\begin{equation}\label{eq:phi}
% c- 1 = x c (1 - \log{x}),
%\end{equation}
%for $0 < x  < 1.$ Then a.a.s.
%$$ \delta(G(n,p)) \geq \phi c \log{n}.$$
%\end{theorem}
%The following corollary provides an estimate on $\phi (c)$ when $c = c(n) \downarrow 1$ as $n\to \infty$. 
%\begin{corollary} \label{cor:phi_asympt} 
%Suppose that $c = 1 + s(n)$, where $s : \mathbb{N} \rightarrow \mathbb{R}^+$ is a function such that 
%$s(n) \downarrow 0$ as $n\to \infty$. Then $ \log \phi (c) / \log (s(n)) \to 1$ as $n\to \infty$.
%\end{corollary}
%\begin{proof} 
%Firstly note that $\phi \downarrow 0$ as $c \downarrow 1$. 
%Now taking logarithms in Equation~\eqref{eq:phi}, we 
%obtain
%$$\log s(n) = \log \phi + \log \log (1/\phi) + o(1), $$
%The corollary then follows.
%\end{proof}

\begin{proof}[Proof of Lemma \ref{Sparse_Algorithm}]

Suppose that initially the majority strategy is $i$. 
Assume first that $\mathcal{I}$ is in the majority regime. 
Then in the random graph $G(n,d/n)$ typically a vertex is expected to have many more neighbours among those playing strategy $i$ than those playing strategy $1-i$. So one would expect that most vertices will adopt strategy $i$ in the next 
round. If we revisit~\eqref{eq:majority}, we will see that if this does not happen, then $n_0(v;i) \leq \lambda^{1-2i} n_0 (v;1)$. Indeed, if $v$ initially was playing strategy $i$, it switches to $1-i$, if $n_0 (v;i) < \lambda^{1-2i} n_0 (v;1)$. If $v$ was initially playing $1-i$, then it does not switch if $n_0 (v;1-i) \geq \lambda^{1-2(1-i)} n_0 (v;i)$. 
Rearranging the latter, we also get that $n_0(v;i) \leq \lambda^{1-2i} n_0 (v;1-i)$. 

Suppose now that $\mathcal{I}$ is in the minority regime. In this case one would expect that most vertices will adopt strategy $1-i$. Suppose that a vertex $v$ initially plays $i$. By~\eqref{eq:minority}, it keeps on playing $i$ after one round, if $n_0 (v;i)\leq \lambda^{1-2i} n_0 (v;i-1)$. Similarly, if $v$ initially plays $1-i$, then it switches to 
$i$, if $n_0 (v;1-i) > \lambda^{1-2(1-i)} n_0 (v;i)$. If we rearrange the latter, we get $n_0 (v;i) < \lambda^{1-2i} n_0 (v;1-i)$. 
%{\color{blue} \red{We recall that $C \geq \max\{C_{\eps,\lambda}, \ell_\lambda + 4, (\ell_\lambda + 1)^{2} \}$. WHERE DO WE RECALL THIS FROM??}}
Furthermore, to reduce notation we set $H_n := H_n(C)$, where $C$ is to be determined later. Also, we set $L_n:=L_n(C)$. 

Assuming that initially the most popular strategy in $H_n$ is $i$, 
we say that a vertex $v \in V_n \cap H_n$ is $i$-\emph{atypical}, if $n_0 (v;i) \leq \lambda^{1-2i} n_0 (v;1-i)$. 
Let $A_n^{(i)}$ denote this set. 
We will show that a.a.s. provided that $|m_0| < n e^{-\gamma d}$ 
 $|A^{(i)}| \leq \eps |m_0 \cap H_n|$, for all partitions of $H_n$ into two parts one of which has size less than $n e^{-\gamma d}$. If this happens, then all but at most $\eps |m_0 \cap H_n|$ vertices in 
$H_n$ will behave as expected and, therefore, $|m_1 \cap H_n |\leq \eps |m_0 \cap H_n|$. 

We proceed with showing the above. 
We assume the majority strategy initially is $i$. So a vertex is $i$-atypical if $n_{0}(v; i) \leq  \lambda^{1-2i} n_{0}(v; 1-i)$. We denote the $S_{1-i} = \{v \in V_{n} : S_{0}(v) = 1-i \}.$  By assumption, we have that $\lvert S_{1-i} \rvert < n  e^{-\gamma d}$ for some $\gamma \in [0,1)$.  

%However, we can also make an assumption on a lower bound on $|S_{1-i}|$. Set $\phi = \frac{\lambda}{2(1+\lambda)} <\frac12$.
% if $|S_{1-i}| <\phi  C$, then in fact all vertices in $H_n$ adopt Strategy $1-i$. 
%Indeed since all vertices in $H_n$ have degree at least $C$, if one of them, say $v$, were $i$-atypical 
%then $\lambda n_0 (v;i) \leq n_0 (v;1-i) \leq |S_{1-i}| < \phi C \leq  \phi d(v)$. But  
%$\phi/\lambda< 1/2$, and hence $n_0 (v;i) < d(v)/2$. Also $n_0 (v;1-i) < d(v)/2$, and we get a contradiction since 
%$n_0(v;0) + n_0 (v;1) =d(v)$. So in this case all of them will adopt Strategy $i$. 
%Thus, we may assume that $\phi C \leq |S_{1-i}| < n e^{-\gamma d}$.

We will condition on the event of Lemma~\ref{lem:distances_Ln}, which we refer to as $\mathcal{D}_n$.
That is, $\mathcal{D}_n$ denotes the event that no more than $\ell_\lambda+1$ vertices in $L_n(C)$ have a 
common neighbour.
According to Lemma~\ref{lem:distances_Ln}, we have $\mathbb{P} (\mathcal{D}_n) = 1-o(1)$.

For a partition $(U_i,U_{1-i})$ of $H_n(C)$, we assume that the vertices in $ U_j$ are assigned strategy $j$, for 
$j\in \{0,1\}$. As we pointed out previously, the event $\mathcal{D}_n$ will allow us to ignore the influence of the vertices in $L_n(C)$ on the evolution of those in $H_n(C)$, provided that $C$ is sufficiently large.
To this end, we will say that a vertex is $i$-atypical \emph{with respect to $(U_i,U_{1-i})$} 
if $n_{0}(v; i) \leq  \lambda^{1-2i} n_{0}(v; 1-i)$ for any initial assignment of strategies to the vertices of $L_n(C)$. 
We will show that a.a.s. for all configurations $(U_i,U_{1-i})$ of $H_n$ with $|U_{1-i}|< n e^{-\gamma d}$ 
there is no collection of $\eps |U_{1-i}|$ vertices in $H_n(C)$ which are $i$-atypical with respect to $(U_i,U_{1-i})$. 
This will imply that $G(n,p)$ is such that for any initial configuration $(U_i,U_{1-i})$ on $H_n(C)$, 
with $|U_{1-i}|< n e^{-\gamma d}$,  and an arbitrary configuration for the vertices in $L_n(C)$,
there will be at most $\eps |U_{1-i}|$ vertices in $H_n(C)$ that will adopt strategy $1-i$ in the subsequent round. 

We wish to bound the number of vertices  in $H_n(C)$ which are $i$-atypical with respect to a given 
configuration $(U_i,U_{1-i})$; thus we define 
$\hat{S}_i = \{ v \in U_i : n_{0}(v; 1-i) \geq \lambda^{2i-1} n_{0}(v ; i) \} $ and $\hat{S}_{1-i} = \{v \in U_{1-i} : n_0 (v;1-i) \geq \lambda^{2i-1} n_0 (v;i) \}$. 
If there are at least $\eps |S_{1-i}|$ vertices which are $i$-atypical with respect to $(U_i,U_{1-i})$, then 
either $|\hat{S}_i| \geq \eps |U_{1-i}|/2$ or $|\hat{S}_{1-i}| \geq \eps |U_{1-i}|/2$.
We will show that 
\begin{eqnarray} 
&& \mathbb{P}\left(  \bigcup_{1 \leq k < n e^{-\gamma d}}  \hspace{0.3cm}  
\vphantom{\bigcup_{1 \leq k \leq n^{\gamma}}} \bigcup_{(U_i, U_{1-i}) : \lvert U_{1-i} \rvert = k}  \left\{  |\hat{S}_i| \geq \eps |U_{1-i}|/2 \right\} \bigcap \mathcal{D}_n  \right) =o(1), \label{eq:to_prove_P_0} \\ 
&& \mathbb{P}\left(  \bigcup_{1 \leq k < n e^{-\gamma d}}  \hspace{0.3cm}  
\vphantom{\bigcup_{\frac12 \log d \leq k \leq n^{\gamma}}} \bigcup_{(U_i, U_{1-i}) : \lvert U_{1-i} \rvert = k}  
\left\{  |\hat{S}_{1-i}| \geq \eps |U_{1-i}|/2 \right\}  \bigcap \mathcal{D}_n  \right) = o(1). \label{eq:to_prove_N_0}
\end{eqnarray}
We will show that the union bound indeed suffices to show these. 
So, firstly, we will consider a fixed partition $(U_i, U_{1-i})$ as above.  
To bound $\mathbb{P} ( \lvert \hat{S}_i \rvert \geq \eps |U_i|/2 )$ and $\mathbb{P}( \lvert \hat{S}_{1-i} \rvert \geq \eps |U_i|/2 )$, 
we translate the defining conditions of $\hat{S}_i$ and $\hat{S}_{1-i}$ into a condition on the degree of these 
vertices in $U_{1-i}$. 
On the event $\mathcal{D}_n$, there are at most $\ell_\lambda+1$ neighbours of $v$ in $L_n$. 
Consider the degree of $v$ inside $U_{1-i}$, which we denote by $d_{U_{1-i}}(v)$. Similarly, 
we denote by $d_{S_{1-i}\cap L_n}(v)$ its degree inside $S_{1-i} \cap L_n$. Thus, 
$d_{U_{1-i}}(v) + d_{S_{1-i}\cap L_n}(v) = n_0 (v;0)$. But since $\mathcal{D}_n$ is realised, we have 
$d_{S_{1-i}\cap L_n}(v) \leq \ell_\lambda + 1$, whereby
\begin{equation} \label{eq:onD_n}
d_{U_{1-i}} (v) +\ell_\lambda+1 \geq  n_0 (v;0). 
\end{equation}
Furthermore, $n_0 (v;i) + n_0 (v;1-i) = d(v) \geq C$. If $\lambda^{2i-1} n_{0}(v; i) \leq n_{0}(v; 1-i)$, then 
$$ \frac{\lambda^{2i-1} +1}{\lambda^{2i-1}} n_0 (v;0) \geq C. $$
We further bound $n_0 (v;0)$ using~\eqref{eq:onD_n} and get 
$$ \frac{\lambda^{2i-1} +1}{\lambda^{2i-1}} \left(d_{U_{1-i}}(v) +\ell_\lambda+1 \right) \geq C. $$
Rearranging this we deduce that 
$$d_{U_{1-i}}(v) \geq \frac{\lambda^{2i-1}}{\lambda^{2i-1} +1}C - (\ell_\lambda+1) \geq 
\frac{\lambda^{2i-1}}{2(\lambda^{2i-1}+1)} C =:\psi_\lambda C, 
$$
provided that $C$ is large enough. 
To summarise, we have proved that if $\lambda^{2i-1} n_{0}(v; i) \leq n_{0}(v; 1-i)$ and $\mathcal{D}_n$ is realised, then 
\begin{equation} \label{eq:degree_cond} d_{U_{1-i}}(v) \geq \psi_\lambda C. 
\end{equation}

We will start with~\eqref{eq:to_prove_P_0}. 
Using~\eqref{eq:degree_cond}, we see that the event in~\eqref{eq:to_prove_P_0} is included in the following event: 
there are disjoint set sets $S, U$ with $1\leq |U| < n e^{-\gamma d}$ and $|S| = \eps |U|/2$ such that 
for any $v \in S$ we have $d_U(s) \geq \psi_\lambda C$. 

Let us consider a set $U$ with $1 \leq |U| \leq n e^{-\gamma d}$ and let $S \subset V_n \setminus U$ be such that $|S| = \eps |U|/2$. 
%We will bound the probability 
%$\mathbb{P} (\forall v \in S, \ n_{0}(v; 1-i) \geq \lambda^{2i-1} n_{0}(v ; i), \mathcal{D}_n)$. Using~\eqref{eq:degree_cond}, we deduce that 
\begin{eqnarray*} 
%\mathbb{P} (\forall v \in S, \ n_{0}(v; 1-i) \geq \lambda^{2i-1} n_{0}(v ; i), \mathcal{D}_n) &\leq &
\mathbb{P} (\forall v \in S,  \ d_{U}(v) \geq \psi_\lambda C) = \prod_{v \in S} \mathbb{P} (d_{U}(v) \geq \psi_\lambda C),
\end{eqnarray*} 
since these are events depending on pairwise disjoint sets of edges.

%Firstly, we will provide an upper bound on $\mathbb{P}[\lambda n_{0}(v; 1) < n_{0}(v; 0) \mid \mathcal{D}_{n}]$ 
%for $v\in P_0$, where $(P_0, N_0)$ is a given partition with $|N_0|$ as specified above. 
%On the event $\mathcal{D}_n$ we have that $n_{0}(v; 1) + n_{0}(v; 0) = d (v) \geq \hat{\phi}\log{n}$. 
%Combining this inequality with the inequality $\lambda n_{0}(v; 1) < n_{0}(v; 0)$ and rearranging,  we have:
%$$\mathbb{P}[\lambda n_{0}(v; 1) < n_{0}(v; 0) \mid \mathcal{D}_{n}] \leq \\ \mathbb{P}\left[n_{0}(v; 0) \geq \left(\frac{\lambda \hat{\phi}}{1 + \lambda}\right)\log{n} \middle\vert \mathcal{D}_{n} \right]. $$
%
%In order to bound this conditional probability, it will be easier to drop the conditioning on $\mathcal{D}_{n}$ using 
%Bayes' Theorem. 
%Setting $\psi_\lambda = \lambda \hat{\phi} / (1 + \lambda)$ we write: 
%$$\mathbb{P}\left[n_{0}(v; 0) \geq \psi_\lambda \log{n} \middle\vert \mathcal{D}_{n} \right] \leq \frac{\mathbb{P}\left[n_{0}(v; 0) \geq \psi_\lambda\log{n}\right]}{\mathbb{P}[\mathcal{D}_{n}]} = \frac{\mathbb{P}\left[n_{0}(v; 0) \geq \psi_\lambda\log{n}\right]}{1 - o (1)}.$$
%
%We deduce that for sufficiently large $n$
%$$\mathbb{P}[\lambda n_{0}(v; 1) < n_{0}(v; 0) \mid \mathcal{D}_{n}] \leq 2\mathbb{P}[n_{0}(v;0) \geq \psi_\lambda \log{n}]. $$
We now observe that for any $v\in S$ the random variable $d_{U}(v)$ is stochastically dominated by a random variable with distribution $\mathrm{Bin}(\lvert U \rvert, d/n)$. 
Using that $\binom{n}{k} \leq (en/k)^{k}$, 
 we can bound the above probability in the following way:
$$ \mathbb{P}(d_{U}(v)  \geq \psi_\lambda C) \leq \dbinom{\lvert U \rvert}{\psi_\lambda C} \left(\frac{d}{n}\right)^{\psi_\lambda C} \leq \left(\frac{e d  \lvert U \rvert}{\psi_\lambda  n}\right)^{\psi_\lambda C}. $$
Substituting this bound into the above inequality, we finally get
\begin{eqnarray*}
\mathbb{P} (\forall v \in S,  \ d_{U}(v) \geq \psi_\lambda C) &\stackrel{|S|=\eps |U|}{\leq}& 
\left(\frac{e d  \lvert U  \rvert}{\psi_\lambda  n}\right)^{\psi_\lambda \eps |U| C} \\
&\stackrel{d\leq \alpha(\lambda)\log n}{=}& \exp \left( -\psi_\lambda \eps |U| C \log (n/|U|) (1+o(1))\right).
\end{eqnarray*}
Now we can bound 
\begin{eqnarray*}
\lefteqn{ \mathbb{P}\left(\exists S: |S|=\eps |U|/2, \ \forall v \in S,  \ d_{U_{1-i}}(v) \geq \psi_\lambda C \right)  \leq} \\
& & {n \choose \eps |U|/2} \cdot  \exp \left( -\psi_\lambda \frac{\eps}{2} |U| C \log \left(\frac{n}{|U|} \right) (1+o(1))\right) \\
&\leq& \left( \frac{n e}{\eps |U|/2}\right)^{\eps |U|/2} \cdot  \exp \left( -\psi_\lambda \frac{\eps}{2} |U| C 
\log \left(\frac{n}{|U|} \right) (1+o(1))\right) \\
&\leq&\exp \left( -(\psi_\lambda C -1)\psi_\lambda \frac{\eps}{2} |U|  \log \left(\frac{n}{|U|} \right) (1+o(1))\right).
\end{eqnarray*}
We are now ready to show~\eqref{eq:to_prove_P_0}. 
We write
\begin{equation}\label{eq:repeated}\begin{aligned}
\lefteqn{\mathbb{P}\left(\exists U, S: S\cap U = \varnothing, \ 1\leq |U| < n e^{-\gamma d}, \ |S|=\eps |U|/2, \ \forall v \in S,  \ d_{U_{1-i}}(v) \geq \psi_\lambda C\right)\leq }  \\ 
%&\leq \sum_{1 \leq k < n e^{-\gamma d}}
%\hspace{0.3cm} \sum_{(U_i, U_{1-i}) : \lvert U_{1-i} \rvert = k} 
%\mathbb{P}\left( |\hat{S}_i| \geq \eps k/2, \mathcal{D}_n  \right)\\ 
& \sum_{1 \leq k < n e^{-\gamma d}} 
\binom{n}{k} \exp \left( -(\psi_\lambda C-1)\frac{\eps}{2} k \log \left(n/k \right) (1+o(1))\right) \\ 
&\leq \sum_{1 \leq k < n e^{-\gamma d}} 
\left( \frac{ne}{k}\right)^k \exp \left( -(\psi_\lambda C-1) \frac{\eps}{2} k \log \left(n/k \right) (1+o(1))\right) \\ 
&= \sum_{1 \leq k < n e^{-\gamma d}} 
\exp \left(k \log (n/k) (1-(\psi_\lambda C-1) \eps/2 )(1+o(1))\right) \\
&\stackrel{\log (n/k) > \gamma d}{\leq} \sum_{1 \leq k < n e^{-\gamma d}} 
\exp \left(\gamma d k  (1-(\psi_\lambda C-1) \eps/2 )(1+o(1))\right) =o(1),
% \\
%&\leq \sum_{\phi C \leq k < n e^{-\gamma d}} 
%\exp \left(- k \psi_\lambda \frac{\eps}{8} C \log (n/k) \right) =o(1),
\end{aligned}
\end{equation}
provided that $C$ is large enough, depending on $\eps, \lambda$.

Now, we turn to~\eqref{eq:to_prove_N_0}. Consider a partition $(U_i,U_{1-i})$ of $H_n(C)$ with 
$|U_{1-i}|$ as specified above. If $|\hat{S}_{1-i}|> \eps |U_{1-i}|/2$ and $\mathcal{D}_n$ is realised, 
then there exists a set of $\eps |U_{1-i}|/2$ vertices in $U_{1-i}$, whose degree inside $U_{1-i}$ is at least $\psi_\lambda C$. 
Hence, the total degree of this set inside $U_{1-i}$ must be at least $\frac{\eps \psi_\lambda}{2} |U_{1-i}| C$. 
In turn, the number of 
edges in $U_{1-i}$ is at least $\frac{\eps \psi_\lambda}{4}|U_{1-i}| C$. 
Thus, if $e(U_{1-i})$ denotes the number of edges inside $S_{1-i}$ we have 
$$ \mathbb{P} (|\hat{S}_{1-i} |\geq \eps |U_{1-i}|/2|, \mathcal{D}_n)\leq \mathbb{P}\left(e(U_{1-i}) \geq \frac{\eps \psi_\lambda}{4} |U_{1-i}| C\right). $$

We will show that a.a.s. any subset $U \subset V_n$ with $1\leq |U| < n e^{-\gamma d}$ has 
$e(U) \leq \frac{\eps \psi_\lambda}{4} |U| C$.
Now, $e(U)$ is stochastically dominated from above by a binomially distributed random variable 
$Y \sim \mathrm{Bin} (|U|^2, d/n)$. So 
$$  \mathbb{P} \left(e(U) \geq \frac{\eps \psi_\lambda}{4} |U| C\right) \leq \mathbb{P} \left(Y \geq  \frac{\eps \psi_\lambda}{4} |U| C\right).$$
We now bound the last probability as follows: 
\begin{eqnarray} 
\mathbb{P} \left(Y \geq  \frac{\eps \psi_\lambda}{4} |U|C \right) &\leq& {|U|^2 \choose \frac{\eps \psi_\lambda}{4} |U|C} \cdot \left( \frac{d}{n}\right)^{\frac{\eps \psi_\lambda}{4} |U|C} \nonumber \\ 
&\leq& \left( \frac{|U|^2e}{\frac{\eps \psi_\lambda}{4} |U| C} \cdot \frac{d}{n}\right)^{\frac{\eps \psi_\lambda}{4} |U|C} \nonumber \\ 
&\leq& \left( \frac{4e}{\eps \psi_\lambda} \cdot \frac{d}{C} \cdot \frac{|U|}{n}\right)^{\frac{\eps \psi_\lambda}{4} |U|C}. \nonumber 
\end{eqnarray}
Since $d = O(\log n)$, we conclude that 
$$\mathbb{P} \left(e(U) \geq \frac{\eps \psi_\lambda}{4} |U| C\right) \leq \exp \left(-(1+o(1)) \frac{\eps \psi_\lambda}{4} |U|C \log (n/|U|) \right). $$
Arguing as in the case of~\eqref{eq:to_prove_P_0}, we take the union bound over all choices 
of the subset $U$ which satisfy the assumed conditions and a similar calculation as in~\eqref{eq:repeated} 
(the only difference being that $\eps/2$ is replaced by $\eps/4$) yields:
\begin{eqnarray*}
\lefteqn{ \mathbb{P}\left(  \bigcup_{1 \leq k < n e^{-\gamma d}}  \hspace{0.3cm}  \right.  \left. \vphantom{\bigcup_{\frac{\hat{\phi}}{2}\log{n} \leq k \leq n^{\gamma}}} \bigcup_{(U_i,U_{1-i}) : \lvert U_{1-i} \rvert = k}  \left\{  |\hat{S}_{1-i}| \geq \eps |U_{1-i}|/2 \right\} \bigcap \mathcal{D}_n\right) \leq } \\
& & \mathbb{P}\left (\exists U : 1\leq |U|< n e^{-\gamma d}, \ e(U) \geq \frac{\eps \psi_\lambda}{4} |U| C \right)
 = o(1).
\end{eqnarray*}
\end{proof}

%Thus we have that a.a.s. across all acceptable partitions, that $\hat{P}_{0},$ is sufficiently small. By applying a similar argument, we can show that across all suitable partitions that $\hat{N}_{0} = \lvert \{ v \in N_{0} : n_{0}(v; 0) \geq \lambda n_{0}(v; 1) \} \rvert \leq \lvert N_{0} \rvert/20.$ However it must be the case that $\hat{N}_{0} + \hat{P}_{0} = \lvert N_{1} \rvert.$ Thus by combining these inequalities we have $\lvert N_{1} \rvert \leq \lvert N_{0} \rvert/10.$ A proof of the cases where $\lambda<1$ and $\mathcal{I}$ is in the minority regime, follow by slight adjustments to this argument.

\section{Unbiased node systems in dense regimes: the $\lambda =1$ case}\label{Unskewed_Node_Systems}
\subsection{Majority and minority dynamics}

%In majority and minority dynamics, the evolution of a node's strategy is purely dependent on the strategies chosen by each of its neighbours. This contrasts with the above section, where the value of $\lambda$ would impose a bias on one of the available strategies. 

We recall that $S_{t}(v)$ is the state of a vertex $v$ at a discrete time-step $t \geq 0.$ We consider a process running on a suitably dense realisation of $G(n,p),$ and also utilise the initial configuration $\mathcal{S}_{1 / 2}$. If our interacting node system $(G(n,p), Q, \mathcal{S}_{1 / 2})$ 
is in the majority regime with $\lambda(Q) =1 $, then its evolution coincides with the \textit{majority dynamics process}. The latter is defined by the following evolution rule:
\begin{equation} \label{eq:maj_dyn}
 S_{t+1}(v) = \begin{dcases}
1 & \textrm{if} \ n_{t}(v; 1) > n_{t}(v; 0);\\
0  & \textrm{if} \ n_{t}(v; 1) < n_{t}(v; 0);\\
S_{t}(v) & \textrm{if} \ n_{t}(v; 1) = n_{t}(v; 0). \\
\end{dcases} 
\end{equation}
In other words, in the majority dynamics process, a node will always choose to adopt the state shared by the majority of its neighbours. If there is a tie, then its state will remain unchanged. 
Goles and Olivos~\cite{ar:GolOliv80} showed that majority dynamics on a finite graph becomes eventually 
periodic with period at most 2. More specifically, there is a $t_0$ 
depending on the graph such that for any $t>t_0$ and for any vertex $v$ we have 
$S_{t}(v) = S_{t+2} (v)$. 
Majority dynamics is also a special case of voting with at least two alternatives; see~\cite{ar:MosselNeemanTamuz}. 
Results on the evolution of majority dynamics on the random graph $G(n,p)$ were obtained recently by Benjamini et al.~\cite{Itai2014Unanimity}. 
The last author in collaboration with Kang, and Makai~\cite{fountoulakis2019resolution} proved the following theorem confirming the rapid stabilisation of the majority dynamics process on a suitably dense $G(n,p)$, 
confirming a conjecture stated in~\cite{Itai2014Unanimity}. Let $M_{0}$ be the most popular vertex state seen across the initial configuration.

\begin{theorem}[{\cite{fountoulakis2019resolution}}]\label{MajorityStable}
For all $\varepsilon \in [0, 1)$ there exist $\Lambda,n_{0}$ such that for all $n > n_{0}$, if $p \geq \Lambda n^{-\frac{1}{2}}$, then $G(n,p)$ is such that with probability at least $1 - \varepsilon$, across the product space of $G(n,p)$ and $\mathcal{S}_{1/2}$, the vertices in $V_n$ following the majority dynamics rule, unanimously have state $M_{0}$ after four rounds. 
\end{theorem}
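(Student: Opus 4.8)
The plan is to track the discrepancy $D_t:=|P_t|-|N_t|$ through the four rounds and show it is driven from $\Theta(\sqrt n)$ up to $n$. By the $\pm$-symmetry of majority dynamics and of $\mathcal S_{1/2}$ we may assume $M_0=1$, so $D_0>0$, and it suffices to show that a.a.s.\ (with probability at least $1-\varepsilon$, for $\Lambda=\Lambda(\varepsilon)$ chosen large) one has $N_4=\varnothing$. The skeleton is: $|D_0|=\Theta(\sqrt n)$ $\Rightarrow$ (round 1) $D_1=\Theta\bigl(\sqrt\Lambda\,n^{3/4}\bigr)$ with the correct sign $\Rightarrow$ (round 2) $|N_2|\le\delta n$ for a small $\delta=\delta(\Lambda)$ $\Rightarrow$ (round 3) $|N_3|=o(n)$ $\Rightarrow$ (round 4) $N_4=\varnothing$. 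Four rounds are needed near $p\asymp n^{-1/2}$ because the per-round amplification factor is of order $\sqrt{pn}\asymp\sqrt\Lambda\,n^{1/4}$, only a fourth root of $n$: two rounds are spent pushing the bias up to a macroscopic $\Theta(n)$ and two more are spent saturating the remaining vertices.

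First I would record the structural facts about $G=G(n,p)$ used throughout: since $pn\ge\Lambda\sqrt n\to\infty$, a.a.s.\ every vertex has degree $(1+o(1))pn$ by the Chernoff bound~\eqref{eq:Chernoff} and a union bound, so in particular $d(v)\ge pn/2$ for all $v$; a.a.s.\ $G$ also enjoys the edge-sparseness properties needed in the saturation phase. Condition on such a graph. For the first round, condition also on $|P_0|$ but \emph{not} on the full assignment $S_0$: then for each $v$ the count $n_0(v;1)$ is hypergeometric, essentially $\mathrm{Bin}\bigl(d(v),\tfrac12+\tfrac{D_0}{2n}\bigr)$, and a local-limit/Chernoff estimate gives $\mathbb P(S_1(v)=1)=\tfrac12+\Theta\bigl(\tfrac{D_0}{n}\sqrt{d(v)}\bigr)$; this estimate is in the Gaussian window because $\tfrac{D_0}{n}\sqrt{pn}\asymp\sqrt\Lambda\,n^{-1/4}\to0$ while $\tfrac{D_0}{n}\,pn\asymp D_0p\asymp\Lambda\gg1$. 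Summing over $v$ gives $\mathbb E\bigl[D_1\mid G,|P_0|\bigr]=\Theta(D_0\sqrt{pn})=\Theta(\sqrt\Lambda\,n^{3/4})$, and since $\mathbb P\bigl(c_1\sqrt n\le|D_0|\le c_2\sqrt n\bigr)\ge1-\varepsilon/3$ for suitable constants $c_1,c_2$, the mean of $D_1$ is of the claimed order.

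The heart of the argument, and the step I expect to be the main obstacle, is to show that $D_1$ concentrates around this mean, i.e.\ $\operatorname{Var}\bigl(D_1\mid G,|P_0|\bigr)=o\bigl(\mathbb E[D_1]^2\bigr)=o(\Lambda n^{3/2})$ --- this is delicate precisely because at $p\asymp n^{-1/2}$ the signal $\mathbb E[D_1]\asymp n^{3/4}$ sits only a small polynomial factor above the trivial noise scale $\sqrt n$. Writing $D_1=\sum_v\sigma_v$ with $\sigma_v=2\mathbbm 1[S_1(v)=1]-1$, the diagonal of the variance is at most $n$, so everything rests on bounding the covariances $\operatorname{Cov}(\sigma_u,\sigma_v)$. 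These are governed by the overlap $|N(u)\cap N(v)|$, which for a typical pair is $\approx p^2n=p\cdot pn\ll pn$; a resampling/coupling argument --- flip the $S_0$-values inside $N(u)\cap N(v)$ and note that changing $o(\sqrt{pn})$ of a vertex's neighbours' states rarely flips the sign of a $\mathrm{Bin}$ count whose typical distance from the tie is of order $\sqrt{pn}$ --- bounds $|\operatorname{Cov}(\sigma_u,\sigma_v)|$ by roughly $|N(u)\cap N(v)|/(pn)$ times the two single-vertex biases, and summing over pairs (using concentration of co-degrees) yields the required bound. One then concludes that a.a.s.\ $\operatorname{sgn}(D_1)=\operatorname{sgn}(D_0)=+1$ and $|D_1|=\Theta(\sqrt\Lambda\,n^{3/4})$. (Ties, which involve $\Theta(n/\sqrt{pn})$ vertices that simply retain $S_0(v)$, are lower order and fold into the same estimates.)

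For the remaining rounds the regime changes. Re-running the round-1 computation from $S_1$, the bias is now $\beta_1:=D_1/(2n)\asymp\sqrt\Lambda\,n^{-1/4}$ with $\beta_1\sqrt{pn}\asymp\Lambda$, a large constant, so we are past the Gaussian window: Chernoff gives $\mathbb P(S_2(v)=0)\le e^{-c\Lambda^2}$, hence $\mathbb E|N_2|\le n\,e^{-c\Lambda^2}$. Only this upper bound is needed, so the dependence of $S_1$ on $G$ can be absorbed using the concentration already established, and Markov yields $|N_2|\le\delta n$ a.a.s.\ with $\delta=\delta(\Lambda)\to0$ as $\Lambda\to\infty$; fix $\Lambda$ so large that $\delta$ is below the threshold needed next. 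In round 3 a vertex lands in $N_3$ only if at least half of its $\ge pn/2$ neighbours lie in $N_2$; here a naive union bound over all vertex subsets of size $\delta n$ fails, so one must use that $N_2$ is not arbitrary but is the highly restricted set of vertices that witnessed an atypical local minority of $0$'s at round 1, and combine this with the sparseness of $G$ to conclude $|N_3|=o(n)$ a.a.s.\ (in fact negligibly small) --- this is the second technical hurdle. Finally, for round 4 a crude union bound now closes the argument: $\mathbb P(N_4\ne\varnothing)\le n\,\mathbb P\bigl(\mathrm{Bin}(|N_3|,p)\ge pn/2\bigr)\le n\,e^{-\Omega(pn)}=n\,e^{-\Omega(\Lambda\sqrt n)}=o(1)$, since $|N_3|\,p=o(pn)$. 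Keeping each of the finitely many exceptional events to probability at most $\varepsilon/3$ fixes $\Lambda=\Lambda(\varepsilon)$ and $n_0$, and gives $N_4=\varnothing$ --- unanimity on $M_0$ after four rounds --- with probability at least $1-\varepsilon$.
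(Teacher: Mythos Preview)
This theorem is not proved in the present paper; it is quoted from \cite{fountoulakis2019resolution}. So there is no in-text proof to compare against directly. However, the paper imports from \cite{fountoulakis2019resolution} the key technical lemmas (Lemmas~\ref{intial_skew}, \ref{ExpAndVar}, \ref{Round1Majority}, \ref{Final2RoundEquiv}), and these reveal a strategy that differs from yours in the crucial first two rounds.

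Your plan is global-then-local: first show $D_1=\sum_v\sigma_v$ concentrates around $\Theta(\sqrt\Lambda\,n^{3/4})$ by bounding $\sum_{u,v}\operatorname{Cov}(\sigma_u,\sigma_v)$, and then for round~2 ``re-run the round-1 computation from $S_1$'' treating each $n_1(v;1)$ as essentially $\mathrm{Bin}(d(v),\tfrac12+D_1/2n)$. The difficulty you flag but do not resolve is real: $S_1$ is a deterministic function of $(G,S_0)$, so conditioned on $G$ the set $P_1$ is \emph{not} a uniformly random subset of its size, and $n_1(v;1)$ is not a fresh binomial. Your sentence ``the dependence of $S_1$ on $G$ can be absorbed using the concentration already established'' is exactly the missing argument; concentration of the scalar $D_1$ says nothing about how $P_1$ is distributed over any particular neighbourhood $N(v)$.

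The approach of \cite{fountoulakis2019resolution}, as visible in Lemma~\ref{ExpAndVar}, bypasses this by going local from the start: for each vertex $v$ one computes directly
\[
\mathbb{E}\bigl[n_1(v;1)\,\big|\,\mathcal S_{1/2}=s_0,\,d(v)=k\bigr]\ \ge\ \tfrac{k}{2}+\Theta\!\left(\sqrt{d^3/n}\right),
\qquad
\mathrm{Var}\bigl[n_1(v;1)\,\big|\,\mathcal S_{1/2}=s_0,\,d(v)=k\bigr]\le \alpha d,
\]
keeping the randomness of both the graph edges and the initial states. Chebyshev then gives $\mathbb{P}(S_2(v)=0\mid\mathcal E_c^+)<\varepsilon$ for each $v$ (Lemma~\ref{Round1Majority}), hence $\mathbb{E}|N_2|<\varepsilon n$ and, by Markov, $|N_2|\le\delta n$ with high probability. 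Note that this variance bound is the \emph{local} second-moment computation $\sum_{u,u'\in N(v)}\operatorname{Cov}(\mathbb{1}[S_1(u)=1],\mathbb{1}[S_1(u')=1])\le\alpha d$; it is the same covariance mechanism you invoke for $D_1$, but applied where it is actually needed. Once one has it, your global $D_1$ step is superfluous.

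For the last two rounds both routes coincide: from $|N_2|\le\delta n$ with $\delta<1/10$, Lemma~\ref{Final2RoundEquiv} (again cited from \cite{fountoulakis2019resolution}) gives unanimity in two further rounds. Your round-3 ``second technical hurdle'' is precisely what that lemma packages; the paper does not reprove it here either.
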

We note that Theorem \ref{MajorityStable} allows us to conclude the stabilisation of the interacting node system
 with $\lambda = 1$ in the majority regime.

On the other hand, if we consider an interacting node system in the minority regime with $\lambda = 1$, then the process coincides with the \textit{minority dynamics process}, described by the following:
\begin{equation} \label{Minority_Game} 
S_{t+1}(v) = \begin{dcases}
1 & \textrm{if} \ n_{t}(v; 1) < n_{t}(v; 0);\\
0  & \textrm{if} \ n_{t}(v; 1) > n_{t}(v; 0);\\
S_{t}(v) & \textrm{if} \ n_{t}(v; 1) = n_{t}(v; 0). \\
\end{dcases} 
\end{equation}
Under these rules, nodes will update to the state shared by the minority of their neighbours. It can be readily 
checked from~\eqref{eq:majority} and~\eqref{eq:minority}, respectively, that the evolution of the above systems 
are identical to an interacting node system with $\lambda=1$. 

%Given the discussion in Section \ref{Unskewed_Node_Systems}, we have that a.a.s. $\lvert P_{t} \rvert \neq \lvert N_{t} \rvert$ for all $t \geq 0,$ therefore it follows that $M_{t}$ is well defined for processes that we are considering.  

We show that in the minority regime unanimity is also achieved within at most four rounds too. 
However,~\eqref{Minority_Game} implies that vertex strategies will alternate synchronously with period two. 
Our theorem concerning the evolution of minority dynamics is analogous to Theorem~\ref{MajorityStable}.
\begin{theorem}\label{Minority_Unanimous}
For all $\varepsilon \in (0, 1]$ there exist $\Lambda,n_{0}$ such that for all $n > n_{0}$, if $p \geq \Lambda n^{-\frac{1}{2}}$, then $G(n,p)$ is such that with probability at least $1 - \varepsilon$, across the product space of $G(n,p)$ and $\mathcal{S}_{1/2}$, the vertices in $V_n$ following the minority dynamics rule will unanimously have the same state after four rounds. 
%For $t \geq 4,$ we have that $S_{t}(v) = \left\lvert M_{0} - \mathbbm{1}_{\{t  \equiv  1 \mod 2\} } \right\rvert$ for all vertices $v$.
\end{theorem}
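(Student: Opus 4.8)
The plan is to transfer Theorem~\ref{MajorityStable} to the minority setting through a time--dependent relabelling of states. Given a trajectory $(S_t)_{t\ge 0}$ of minority dynamics \eqref{Minority_Game} with $S_0=\mathcal S_{1/2}$, put $\tilde S_t(v):=S_t(v)\oplus(t\bmod 2)$, i.e.\ flip every state at the odd steps. Comparing \eqref{eq:maj_dyn} and \eqref{Minority_Game} one checks directly that at every vertex $v$ and every round $t$ with $n_t(v;0)\ne n_t(v;1)$ the transition $\tilde S_t\mapsto\tilde S_{t+1}$ at $v$ is \emph{exactly} the majority rule; the sole discrepancy is at a vertex that is in a tie, where \eqref{Minority_Game} keeps the state and hence, after the parity flip, $\tilde S$ flips it --- the opposite of \eqref{eq:maj_dyn}. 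So $(\tilde S_t)_t$ agrees with majority dynamics started from $\mathcal S_{1/2}$ on every round in which no vertex is in a tie, and if that held for all $t\ge 0$ then Theorem~\ref{MajorityStable} would give $\tilde S_4\equiv M_0$ a.a.s., i.e.\ $S_4\equiv M_0$ (the step is even); \eqref{Minority_Game} then forces $S_t\equiv M_0$ for even $t\ge 4$ and $S_t\equiv 1-M_0$ for odd $t\ge 5$ (the graph is a.a.s.\ connected in this range, so no vertex is isolated), which is exactly the statement of Theorem~\ref{Minority_Unanimous}.

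The real work is therefore to show that ties do no damage, and the delicate point is the first round. A vertex of degree $\approx np$ splits its neighbourhood almost evenly between $P_0$ and $N_0$, so it lies in a tie at round $0$ with probability of order $(np)^{-1/2}$, and the random set $Z_0$ of round-$0$ ties has $|Z_0|=\Theta\bigl(n(np)^{-1/2}\bigr)=\Theta(n^{3/4}/\sqrt\Lambda)$: nonempty but genuinely sublinear. The reason a crude Chernoff bound \eqref{eq:Chernoff} is not enough to analyse round $1$ is that the mean per-vertex imbalance $p\bigl(|P_0|-|N_0|\bigr)$ is only of order $p\sqrt n=\Theta(\Lambda)$, whereas the per-vertex fluctuation of $n_0(v;1)-n_0(v;0)$ is of order $(np)^{1/2}=\Theta(\sqrt\Lambda\,n^{1/4})$; I would extract the weak bias exactly as in \cite{fountoulakis2019resolution}, via a local-central-limit / Berry--Esseen estimate, to conclude that after one round an imbalance of order $\sqrt\Lambda\,n^{3/4}$ toward one state $M$ has formed. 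For $\Lambda$ chosen large this imbalance dominates $|Z_0|$ (and, similarly, the set $Z_1$ of round-$1$ ties, which is even smaller), so the relabelled configuration $\tilde S_1$ is an $O(n^{3/4})$-perturbation of a genuine majority-dynamics image of $\mathcal S_{1/2}$ whose plurality state is still $M$.

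From round $1$ on I would run the argument of \cite{fountoulakis2019resolution} essentially verbatim, directly on the minority process (the intuition being the conjugacy above). Flipping the $O(n^{3/4})$ tied vertices shifts each vertex's signal at the next round by at most a bounded multiple of its statistical fluctuation, hence by at most an $O(1/\Lambda)$ fraction of its mean; since $\Lambda$ is large this is harmless, and the imbalance still amplifies: it reaches a positive fraction of $n$ after two rounds and then, since the per-vertex signal exceeds $(np)^{1/2}\log^{1/2}n$, a.a.s.\ every vertex adopts the plurality state at the next round. In particular no vertex is ever in a tie from round $2$ on, so from round $2$ the relabelled minority process literally is majority dynamics, and it is unanimous at $M$ by round $4$; undoing the flip gives $S_4\equiv M$ a.a.s., and period-two unanimity from round $4$ as in the first paragraph. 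Strictly, this uses a robustness version of Theorem~\ref{MajorityStable} --- stable under an $O(n^{3/4})$ perturbation of the configuration at rounds $0$ and $1$ --- which is read off the proof in \cite{fountoulakis2019resolution} rather than quoted as a black box.

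The main obstacle is precisely this interaction of the first round with the inverted tie rule: the bias that triggers convergence is a lower-order signal buried in noise of larger order, so it has to be pulled out with the distributional estimates of \cite{fountoulakis2019resolution} rather than a bare concentration bound, and one must then check that the $\Theta(n^{3/4})$ vertices on which the minority relabelling and honest majority dynamics can disagree perturb these estimates negligibly --- which is exactly why $\Lambda$ must be taken large. Beyond round $1$ the two processes coincide up to the global flip, so no new idea past \cite{fountoulakis2019resolution} is needed, and the minority case is, as the paper says, truly analogous to Theorem~\ref{MajorityStable}.
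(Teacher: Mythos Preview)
Your approach is essentially the same as the paper's, packaged slightly differently. The paper couples one minority step on $\mathcal S$ with one majority step on the complementary configuration $\bar{\mathcal S}$ (its Lemma~\ref{coupling}), which is the $t=0\to 1$ case of your parity relabelling $\tilde S_t=S_t\oplus(t\bmod 2)$; your device is a cleaner, time-uniform way of saying the same thing. The paper then does exactly what you outline: it bounds the effect of the tie set $Z_0$ on each vertex's round-$1$ signal via a local-limit-theorem bound on $\mathrm{EQ}(v)=|N(v)\cap Z_0|$ (Lemma~\ref{equal_neighbourhoods}), and it proves the ``robustness'' you ask for by strengthening the round-$1$ majority estimate to a margin $n_1^M(v;1)\ge d(v)/2+2\gamma\sqrt d$ (Lemma~\ref{Round1Majority}) so that subtracting $\mathrm{EQ}(v)<\gamma\sqrt d$ still leaves a strict majority. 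After two rounds one has a linear majority and the paper finishes with the same endgame you sketch (Lemma~\ref{Final2RoundEquiv}).

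The one place where your writeup is slightly looser than the paper is that you phrase the tie damage in terms of the global count $|Z_0|$, whereas what the round-$1$ per-vertex argument actually needs is the \emph{local} quantity $\mathrm{EQ}(v)$; the paper makes this explicit. Your back-of-the-envelope scaling ($\mathbb E\,\mathrm{EQ}(v)\asymp p|Z_0|\asymp\sqrt d$, against a signal $\asymp(d^3/n)^{1/2}$, giving ratio $\asymp 1/\Lambda$) is correct, so this is a presentational rather than a mathematical gap.
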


Throughout this section we consider $p = d/n$ where $d \geq \Lambda  \sqrt{n}$, 
for a suitably large constant $\Lambda.$ Thus, in comparison to the previous section, we will now work in a denser regime. We will comment on sparser regimes in the discussion section of our paper. 
 We consider an initial configuration of $\mathcal{S}_{1/2},$ and apply the evolution rules from~\eqref{eq:maj_dyn} in the majority regime, or the evolution rules from~\eqref{Minority_Game} in the minority regime. We refer to the node system using evolution rule~\eqref{eq:maj_dyn} as the \textit{majority game} which we denote as $\left(G(n,p), \mathcal{S}_{1/2}\right)^{>};$ while we refer to the system given by evolution rule~\eqref{Minority_Game} as the \textit{minority game}, denoted  $\left(G(n,p), \mathcal{S}_{1/2}\right)^{<}.$ We show that in the minority game, unanimity is achieved after at most four rounds with high probability.  As noted above, the majority game will give rise to stability, while the minority game produces a periodic system 
with period two. 

The quantity $\eta_{t} := \Big\lvert \lvert P_{t} \rvert - \lvert N_{t} \rvert \Big\rvert$ represents the size of the majority of the dominant strategy at time $t$. Due to the distribution of the $\mathcal{S}_{1/2}$, we have with probability $1- \varepsilon$  that the quantity  $\eta_{0} = \Big\lvert \lvert P_{0} \rvert - \lvert N_{0} \rvert \Big\rvert$ will be sufficiently bounded away from zero.

\begin{lemma}[\cite{fountoulakis2019resolution}]\label{intial_skew}

Given $\varepsilon > 0,$ set $c = c(\varepsilon) = \sqrt{2\pi} \varepsilon / 20.$ Then across the probability space $\mathcal{S}_{1/2}$,
$$\mathbb{P}\left[ \eta_{0} \geq 2c\sqrt{n}\right] \geq 1 - \varepsilon / 4, $$
for any $n$ sufficiently large.
\end{lemma}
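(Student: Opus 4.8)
The plan is to reduce the statement to a standard anti-concentration estimate for a symmetric binomial. Observe that $\lvert P_0\rvert$ is distributed as $\mathrm{Bin}(n,1/2)$ (it depends only on the randomness of $\mathcal{S}_{1/2}$) and $\lvert N_0\rvert = n - \lvert P_0\rvert$, so $\eta_0 = \lvert \lvert P_0\rvert - \lvert N_0\rvert\rvert = \lvert 2\lvert P_0\rvert - n\rvert$. Hence $\{\eta_0 \geq 2c\sqrt n\}$ is precisely the event $\{\lvert \lvert P_0\rvert - n/2\rvert \geq c\sqrt n\}$, and it suffices to prove that $\mathbb{P}[\lvert \lvert P_0\rvert - n/2\rvert < c\sqrt n] \leq \varepsilon/4$ for all sufficiently large $n$.

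To bound this small-deviation probability I would take a union bound over the integers $k$ in the window $(n/2 - c\sqrt n,\, n/2 + c\sqrt n)$. There are at most $2c\sqrt n + 1$ such integers, and for each of them $\mathbb{P}[\lvert P_0\rvert = k] = \binom{n}{k}2^{-n}$, which is maximised at $k = \lfloor n/2\rfloor$. By Stirling's formula $\binom{n}{\lfloor n/2\rfloor}2^{-n} = (1+o(1))\sqrt{2/(\pi n)}$ (with a harmless adjustment for the parity of $n$, which does not affect the asymptotics). Multiplying, the target probability is at most $(2c\sqrt n + 1)\cdot(1+o(1))\sqrt{2/(\pi n)} = (1+o(1))\,2c\sqrt{2/\pi} + o(1)$. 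Substituting $c = \sqrt{2\pi}\,\varepsilon/20$ gives $2c\sqrt{2/\pi} = \varepsilon/5$, so the bound is $\varepsilon/5 + o(1)$, which is at most $\varepsilon/4$ once $n$ is large. Equivalently, one can appeal to the central limit theorem: $(\lvert P_0\rvert - n/2)/(\tfrac12\sqrt n) \Rightarrow N(0,1)$, whence the probability converges to $\mathbb{P}[\lvert Z\rvert < 2c] \leq 4c/\sqrt{2\pi} = \varepsilon/5 < \varepsilon/4$.

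I do not expect a genuine obstacle here; this is a routine anti-concentration bound. The only point requiring care is the constant bookkeeping: one must verify that the leading constant $\varepsilon/5$ is \emph{strictly} below the target $\varepsilon/4$, leaving room to absorb the $o(1)$ coming from Stirling's approximation (or from the Berry--Esseen rate in the CLT version), and one should treat the floor and parity of $n/2$ carefully so that the assertion ``the window contains at most $2c\sqrt n + 1$ integers, each of probability at most the central term'' is literally valid.
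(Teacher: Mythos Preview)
Your proof is correct and follows essentially the same approach as the paper, which merely remarks that the lemma ``is a direct consequence of the Local Limit Theorem'' (Theorem~\ref{Local_Limit}); your Stirling computation is precisely the local limit estimate for the central term of $\mathrm{Bin}(n,1/2)$, and your constant bookkeeping yielding $\varepsilon/5 + o(1) \leq \varepsilon/4$ is exactly what is needed.
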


The proof of this lemma is a direct consequence of the Local Limit Theorem (see Theorem~\ref{Local_Limit} below). We define $\mathcal{E}_{c}^+$ to be the event that $\lvert P_{0} \rvert - \lvert {N_{0}} \rvert  \geq 2c\sqrt{n},$ and $\mathcal{E}_{c}^-$ to be the event that $\lvert N_{0} \rvert - \lvert P_{0} \rvert \geq 2c\sqrt{n}.$ The events $\mathcal{E}_{c}^+$ and $\mathcal{E}_{c}^-$ occur with equal probability; therefore by symmetry 
we may condition on either $\mathcal{E}_{c}^+$ or $\mathcal{E}_c^-$, without loss of generality.

\subsection{Some results on the majority regime}
%Much of the work for the majority regime follows directly from \cite{fountoulakis2019resolution}. 
%As a direct consequence of Theorem \ref{MajorityStable}, the majority regime interacting node system $\left(G(n,p), Q, \mathcal{S}_{1/2}\right)$ with payoff skew $\lambda(Q) = 1$ will reach a unanimous stability after at most four rounds, with high probability. If $\mathcal{E}_{c}^+$ occurs, then all vertices will end up playing Strategy 1. If 
%$\mathcal{E}_{c}^-$ occurs ,then all vertices will stabilise in Strategy 0.
We will consider a selection of results from~\cite{fountoulakis2019resolution} (Lemmas 3.5, 3.6 therein), 
which will be useful in the minority regime analysis. The first result  concerns the expectation and variance of $n_{1}(v; 1),$ given that $\mathcal{E}_{c}^+$ has occurred.
%\begin{lemma}\label{boosted}

%For sufficiently large $n,$ there exists a constant %$\xi$ independent of $\varepsilon,$ such that for all vertices in $V_{n}$ we have:

%$$\mathbb{P}\left[S_{1}(v) = +1 \mid \mathcal{E}_{c} \right] \geq \mathbb{P}\left[n_{0}(v; 1) > n_{0}(v ; 0) + 3/2 \right] > \frac{1}{2} + \frac{\xi c}{6} \left( \frac{d}{n} \right)^{1/2}.$$

%\end{lemma}

%By Lemma \ref{boosted} we can observe that the probability a vertex is in the $+1$ state in the first round is slightly boosted from $1/2$. This is a direct consequence of the slight majority of vertices taking the $+1$ state in round zero, given the event $\mathcal{E}_{c}$ occurs. By applying lemma \ref{boosted}, the authors are then able to give bounds on the expected size and variance of $n_{1}(v;1):$
For a vertex $v\in V_n$ let $\mathcal{N}(v) = \{ |d(v) - d | < d^{2/3}\}$. 
\begin{lemma}\label{ExpAndVar}
Consider the majority game $M = (G(n,p), \mathcal{S}_{1/2})^{>}$ where $p=d/n$. Let $c=c(\varepsilon)$ be the constant given in Lemma~\ref{intial_skew}. 
Then there exists a constant $\zeta,$ (independent of $\varepsilon)$ such that for any $v \in V_n$ and  any 
$n$ sufficiently large the following holds: for any configuration $s_0 \in \mathcal{E}_c^+$ 
and any $k\in \mathbb{N}$ such that $|k-d| < d^{2/3}$:
$$ \mathbb{E}\left[n_{1}(v; 1)  \mid \mathcal{S}_{1/2} =s_0, d(v) = k \right] \geq 
\frac{k}{2} + \frac{\zeta c}{7} {\left( \frac{d^{3}}{n} \right)}^{1/2}.$$
Moreover, there exists a positive constant $\alpha,$ such that for any $k\in \mathbb{N}$ with 
$|k-d| < d^{2/3}$ we have 
$$\mathrm{Var}\left[n_{1}(v; 1) \mid \mathcal{S}_{1/2} =s_0, d(v)=k \right] \leq \alpha d.$$
\end{lemma}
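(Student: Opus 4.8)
The plan is to expand $n_1(v;1)=\sum_{u\in V_n\setminus\{v\}}\mathbbm{1}[u\sim v]\,\mathbbm{1}[S_1(u)=1]$ and estimate each summand. Fix a configuration $s_0\in\mathcal{E}_c^+$, so that $|P_0|-|N_0|\geq 2c\sqrt n$, and fix $k$ with $|k-d|<d^{2/3}$; everything below is conditioned on $\mathcal{S}_{1/2}=s_0$, which does not affect the law of $G(n,p)$. The starting point is the observation that, given the edge $uv$, the event $\{S_1(u)=1\}$ is determined by the edges incident to $u$ other than $uv$ (through $n_0(u;0)$ and $n_0(u;1)$, cf.~\eqref{eq:maj_dyn}), hence is independent of the event $\{d(v)=k\}$. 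Therefore $\mathbb{E}[n_1(v;1)\mid d(v)=k]=\frac{k}{n-1}\sum_{u\neq v}\mathbb{P}(S_1(u)=1\mid u\sim v)$, and it suffices to bound $\mathbb{P}(S_1(u)=1\mid u\sim v)$ from below uniformly in $u$.

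To do this, I would condition on $u\sim v$ and rewrite~\eqref{eq:maj_dyn}: the counts $n_0(u;1),n_0(u;0)$ equal two independent binomials $Z_1\sim\mathrm{Bin}(|P_0\setminus\{u,v\}|,p)$ and $Z_0\sim\mathrm{Bin}(|N_0\setminus\{u,v\}|,p)$, plus a deterministic $+1$ added to one of them according to $S_0(v)$, so that $S_1(u)=1\Longleftrightarrow D:=Z_1-Z_0\geq\theta$ for an explicit integer $\theta\in\{-1,0,1,2\}$ depending only on $S_0(u)$ and $S_0(v)$ (it records the contribution of $v$ and the tie-break in~\eqref{eq:maj_dyn}). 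In every case $D$ is a sum of i.i.d.\ $\{-1,0,1\}$-valued variables with mean $\mu=(|P_0|-|N_0|+O(1))p\geq(2-o(1))c\,d/\sqrt n$ and variance $\sigma^2=(1+o(1))d$, so that $\mu/\sigma=\Theta(c\sqrt{d/n})\to 0$. I would then split $Z_1=Z_1'+Z_1''$ into independent parts with $Z_1'\sim\mathrm{Bin}(|N_0\setminus\{u,v\}|,p)$ and $Z_1''\sim\mathrm{Bin}(|P_0|-|N_0|+O(1),p)$, whence $D=W+Z_1''$ with $W:=Z_1'-Z_0$ symmetric about $0$ and $Z_1''\geq 0$. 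This gives, with no central-limit input on the symmetric part, $\mathbb{P}(D\geq0)\geq\tfrac12+\sum_{i\geq1}\mathbb{P}(W=-i)\,\mathbb{P}(Z_1''\geq i)$. Applying the Local Limit Theorem (Theorem~\ref{Local_Limit}) to $W$ yields $\mathbb{P}(W=-i)\geq c_0/\sqrt d$ for all $|i|\leq A\sqrt d$, with absolute constants $c_0,A>0$; since $\mathbb{E}[Z_1'']\geq(1-o(1))\,2c\,d/\sqrt n$ and $Z_1''$ is stochastically increasing in $|P_0|-|N_0|$, one obtains $\mathbb{E}[\min(Z_1'',A\sqrt d)]\geq(1-o(1))\,2c\,d/\sqrt n$ and hence $\mathbb{P}(D\geq0)\geq\tfrac12+(1-o(1))\,2c_0c\sqrt{d/n}$. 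Passing from $\{D\geq0\}$ to $\{D\geq\theta\}$ costs at most $|\theta|\max_j\mathbb{P}(D=j)=O(1/\sqrt d)$ by Theorem~\ref{Local_Limit} again, which is $o(c\sqrt{d/n})$ because $\sqrt{d/n}\cdot\sqrt d=d/\sqrt n\geq\Lambda$ with $\Lambda=\Lambda(\varepsilon)$ chosen large. Thus $\mathbb{P}(S_1(u)=1\mid u\sim v)\geq\tfrac12+\tfrac{2\zeta c}{7}\sqrt{d/n}$ for a suitable absolute constant $\zeta$, and summing with $k\geq(1-o(1))d\geq d/2$ gives $\mathbb{E}[n_1(v;1)\mid d(v)=k]\geq\tfrac k2+\tfrac{2\zeta c}{7}k\sqrt{d/n}\geq\tfrac k2+\tfrac{\zeta c}{7}\sqrt{d^3/n}$.

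For the variance I would condition additionally on $N_G(v)=T$ for a fixed $k$-set $T$, so that $n_1(v;1)=\sum_{u\in T}B_u$ with $B_u:=\mathbbm{1}[S_1(u)=1]$, each $B_u$ a function only of the edges incident to $u$ (with $uv$ present and fixed). For $u\neq u'$ in $T$, the only edge influencing both $B_u$ and $B_{u'}$ is $uu'$, so conditioning on $\mathbbm{1}[u\sim u']$ makes $B_u$ and $B_{u'}$ independent and $\mathrm{Cov}(B_u,B_{u'})=\big(\mathbb{E}[B_u\mid u\sim u']-\mathbb{E}[B_u\mid u\not\sim u']\big)\big(\mathbb{E}[B_{u'}\mid u\sim u']-\mathbb{E}[B_{u'}\mid u\not\sim u']\big)\,p(1-p)$. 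Adding one neighbour shifts the relevant binomial difference by $1$, so each factor is $O(1/\sqrt d)$ by Theorem~\ref{Local_Limit}, giving $\mathrm{Cov}(B_u,B_{u'})=O(p/d)=O(1/n)$. Summing over the at most $k^2$ ordered pairs and the $k$ diagonal terms, and using $k=O(d)$ and $d\leq n$, yields $\mathrm{Var}[n_1(v;1)\mid N_G(v)=T]=O(d)$ uniformly in $T$. The law of total variance over the choice of $T$ then adds $\mathrm{Var}_T\big[\mathbb{E}[n_1(v;1)\mid N_G(v)=T]\big]=(q_1-q_0)^2\,\mathrm{Var}_T(|T\cap P_0|)$, where $q_j=\mathbb{P}(S_1(u)=1\mid u\sim v,\ S_0(u)=j)\in[0,1]$ and $|T\cap P_0|$ is hypergeometric with variance $O(k)=O(d)$, so this is also $O(d)$. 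Altogether $\mathrm{Var}[n_1(v;1)\mid\mathcal{S}_{1/2}=s_0,\ d(v)=k]\leq\alpha d$ for a suitable constant $\alpha$.

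The hardest part is the quantitative local-limit control: establishing that $\mathbb{P}(D\geq0)=\tfrac12+\Theta(\mu/\sigma)$ sharp up to a $(1+o(1))$ factor in the delicate regime $\mu/\sigma\to0$, and the $O(1/\sqrt d)$ estimate for the effect of a single additional coordinate. Both reduce to Theorem~\ref{Local_Limit} for sums and differences of i.i.d.\ lattice variables, but one must be careful that the $O(1/\sqrt d)$ tie-break corrections are dominated by the $\Theta(c\sqrt{d/n})$ ``signal'' — which is exactly where the hypothesis $d\geq\Lambda\sqrt n$, with $\Lambda$ permitted to depend on $\varepsilon$, is used — and one must handle the four tie-break cases uniformly in $u$.
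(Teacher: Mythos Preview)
The paper does not supply its own proof of this lemma; it is quoted from \cite{fountoulakis2019resolution} (Lemmas~3.5 and 3.6 there). Your argument follows what is essentially the natural route for such a statement: write $n_1(v;1)$ as a sum of indicators, reduce each term to a tail probability for a difference of two binomials, symmetrise by splitting off $Z_1''$, and then use the local limit theorem both to extract the $\Theta(c\sqrt{d/n})$ bias and to control the effect of a single extra edge. The variance computation via conditioning on $N_G(v)=T$, isolating the single shared edge $uu'$ to obtain $\mathrm{Cov}(B_u,B_{u'})=p(1-p)(x-y)(x'-y')=O(p/d)$, and then adding the hypergeometric contribution from the law of total variance, is clean and correct. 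This is very much in the spirit of the original argument.

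A couple of technical points deserve a little more care in a full write-up. First, your lower bound $\mathbb{E}[\min(Z_1'',A\sqrt d)]\geq (1-o(1))\,2cd/\sqrt n$ is stated as a consequence of stochastic monotonicity, but monotonicity only reduces you to the boundary case $|P_0|-|N_0|=\lceil 2c\sqrt n\rceil$; you still need a short concentration step (e.g.\ Chernoff for $Z_1''$, noting that there $\mathbb{E}Z_1''=2cd/\sqrt n=o(\sqrt d)$) to pass from the truncated to the untruncated mean. Second, your LLT application to $W$ presupposes $\mathrm{Var}(W)=\Theta(d)$, i.e.\ $|N_0|=\Theta(n)$; when $|N_0|=o(n)$ the decomposition still works but the argument is different (then $\mathbb{E}Z_1''\sim d$ is far above $\sqrt d$ and $\mathbb{P}(D\geq 2)\to 1$ directly). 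Finally, you correctly flag that absorbing the $O(1/\sqrt d)$ tie-break cost into the $\Theta(c\sqrt{d/n})$ signal needs $d\geq\Lambda\sqrt n$ with $\Lambda$ large in terms of $c$; this is consistent with how the lemma is used in the paper (cf.\ Lemma~\ref{Round1Majority}), though the dependence of the implicit ``sufficiently large $n$'' on $\varepsilon$ should be made explicit.
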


By applying Lemma \ref{ExpAndVar}, we now proceed to prove an adjustment to a result from \cite{fountoulakis2019resolution}. In the modified result, we show that with high probability a vertex $v$ will have a $n_{1}(v; 1)$ sufficiently bounded away from $d(v)/2$. In \cite{fountoulakis2019resolution}, the authors show that this quantity is at least $d (v) / 2$. However, in order for us to utilise this result for the minority game, we will instead require $n_{1}(v; 1) \geq d(v)  / 2 + 2 \gamma \sqrt{d},$ with high probability, for some positive constant $\gamma$. We elaborate on the reasoning behind this assertion in Section~\ref{Minority_Regime}.

\begin{lemma}\label{Round1Majority}
Let $\varepsilon > 0,$ and $c$ a positive constant as given in Lemma \ref{intial_skew}. 
Consider the majority game $M = (G(n,p), \mathcal{S}_{1/2})^{>}$ where $p=d/n$.
For any positive constant $\gamma$ there exists a positive constant $\Lambda = \Lambda (\gamma, \eps)$, such that for all $n$ sufficiently large if $d \geq \Lambda n^{1/2}$
and $v \in V_n,$ the followings holds: 
$$\mathbb{P}\left[n_{1}(v; 1) < \frac{d(v)}{2} + 2\gamma \sqrt{d} \ \middle\vert \ \mathcal{E}_{c}^+ \right] < \varepsilon.$$
\end{lemma}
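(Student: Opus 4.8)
The plan is to condition on the degree $d(v)$ and combine Lemma~\ref{ExpAndVar} with Chebyshev's inequality. First I would fix an arbitrary configuration $s_0 \in \mathcal{E}_c^+$ and reduce to showing $\mathbb{P}\!\left[n_1(v;1) < d(v)/2 + 2\gamma\sqrt d \mid \mathcal{S}_{1/2}=s_0\right] < \varepsilon$ for every such $s_0$; averaging over $s_0$ with weights $\mathbb{P}[\mathcal{S}_{1/2}=s_0 \mid \mathcal{E}_c^+]$ then yields the statement. Since $G(n,p)$ is independent of $\mathcal{S}_{1/2}$, the degree $d(v)\sim\mathrm{Bin}(n-1,d/n)$ is independent of $s_0$, and a Chernoff bound via~\eqref{eq:Chernoff} with relative deviation of order $d^{-1/3}$ gives $\mathbb{P}[|d(v)-d|\geq d^{2/3}] = o(1)$ (here $d\geq\Lambda n^{1/2}\to\infty$). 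For $n$ large this is below $\varepsilon/2$, so it suffices to control the probability in question on the event $\mathcal{N}(v)$, i.e.\ conditionally on $d(v)=k$ for integers $k$ with $|k-d|<d^{2/3}$.

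For such $k$, Lemma~\ref{ExpAndVar} gives $\mathbb{E}[n_1(v;1) \mid \mathcal{S}_{1/2}=s_0,\, d(v)=k] \geq k/2 + \frac{\zeta c}{7}(d^3/n)^{1/2}$ and $\mathrm{Var}[n_1(v;1)\mid \mathcal{S}_{1/2}=s_0,\, d(v)=k] \leq \alpha d$. The key observation is that in the regime $d\geq\Lambda n^{1/2}$ one has $(d^3/n)^{1/2} = (d^2/n)^{1/2}\sqrt d \geq \Lambda\sqrt d$, so the conditional mean exceeds $k/2 + 2\gamma\sqrt d$ by at least $\Delta := \bigl(\tfrac{\zeta c}{7}\Lambda - 2\gamma\bigr)\sqrt d$, which is a positive constant multiple of $\sqrt d$ as soon as $\Lambda$ is taken large relative to $\gamma$. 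Since $\{n_1(v;1) < k/2 + 2\gamma\sqrt d\} \subseteq \{\,|n_1(v;1) - \mathbb{E}[n_1(v;1)\mid\cdots]| \geq \Delta\,\}$, Chebyshev's inequality gives
$$\mathbb{P}\!\left[n_1(v;1) < \tfrac{k}{2} + 2\gamma\sqrt d \ \middle\vert\ \mathcal{S}_{1/2}=s_0,\, d(v)=k\right] \leq \frac{\alpha d}{\Delta^2} = \frac{\alpha}{\bigl(\tfrac{\zeta c}{7}\Lambda - 2\gamma\bigr)^2},$$
uniformly over all admissible $k$, and enlarging $\Lambda$ once more (now depending on $\alpha,\zeta,c,\gamma,\varepsilon$) makes this at most $\varepsilon/2$.

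Finally I would assemble the two bounds: conditioning on $\mathcal{S}_{1/2}=s_0$ and splitting according to whether $\mathcal{N}(v)$ holds, the contribution of $\mathcal{N}(v)^c$ is at most $\varepsilon/2$ and, on $\mathcal{N}(v)$, the total mass of the bad event is at most $\varepsilon/2$ by the displayed estimate, so the sum is below $\varepsilon$; averaging over $s_0\in\mathcal{E}_c^+$ completes the argument. I do not anticipate a genuine difficulty here — it is a routine second-moment deviation estimate — the only point that needs care is ensuring the gap between $\mathbb{E}[n_1(v;1)]$ and $d(v)/2 + 2\gamma\sqrt d$ grows like $\sqrt d$ (so that Chebyshev beats the variance bound $\alpha d$), which is precisely where the hypothesis $d\geq\Lambda n^{1/2}$ and the freedom to choose $\Lambda = \Lambda(\gamma,\varepsilon)$ large are used.
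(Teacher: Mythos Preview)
Your proposal is correct and follows essentially the same approach as the paper: condition on $\mathcal{S}_{1/2}=s_0\in\mathcal{E}_c^+$ and on $d(v)=k$ in the typical range $|k-d|<d^{2/3}$, invoke Lemma~\ref{ExpAndVar} for the mean and variance, apply Chebyshev to exploit the $\Theta(\sqrt d)$ gap coming from $(d^3/n)^{1/2}\geq\Lambda\sqrt d$, and absorb the atypical-degree event via Chernoff. The only cosmetic difference is that the paper carries the Chebyshev algebra in a slightly less streamlined form before choosing $\Lambda$ large, whereas you isolate the gap $\Delta=(\tfrac{\zeta c}{7}\Lambda-2\gamma)\sqrt d$ first.
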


\begin{proof}

This argument is a direct application of Chebyshev's inequality. 
Fix $s_0 \in \mathcal{E}_c^+$ and $k\in \mathbb{N}$ such that $|k-d|< d^{2/3}$.
By applying Lemma \ref{ExpAndVar} and subtracting  $2 \gamma \sqrt{d},$ from both sides we have that:
$$ \left\lvert \mathbb{E}\left[ n_{1}(v; 1) \mid \mathcal{S}_{1/2} =s_0, d(v)=k \right] - \left( \frac{d(v) }{2} + 2\gamma \sqrt{d} \right)  \right\rvert  \geq {\left( \frac{d^{3}}{n} \right)}^{1/2} \frac{\zeta c}{7} - 2\gamma \sqrt{d}.$$

By Lemma \ref{ExpAndVar} we have $\mathrm{Var}\left[n_{1}(v; 1) \mid 
 \mathcal{S}_{1/2} =s_0, d(v)=k  \right] \leq \alpha d.$ 
We now apply Chebyshev's inequality to bound the probability that  $n_{1}(v;1) < d(v)/2 + 2\gamma \sqrt{d}$. 
This gives
\begin{eqnarray*}
\lefteqn{\mathbb{P}\left[n_{1}(v; 1) < \frac{d(v)}{2} + 2\gamma \sqrt{d} \mid  \mathcal{S}_{1/2} =s_0, d(v)=k \right] \leq} \\
& & \frac{49 \alpha d }{ {\left( \zeta c \left(d^{3} / n \right)^{1/2} - 14 \gamma \sqrt{d} \right)}^2}   \leq \frac{49 \alpha}{\zeta c \left(\frac{d}{\sqrt{n}} \right) \left[\zeta c \left(\frac{d}{\sqrt{n}} \right) -28 \gamma \right]} . 
 \end{eqnarray*}

We now recall that $d > \Lambda \sqrt{n}$. If we take $\Lambda > (1 + 28 \gamma) / (\zeta c),$ then $\zeta c \left(d /\sqrt{n} \right) -28 \gamma \geq 1.$ Applying this inequality to the denominator, and choosing $\Lambda > 2 \cdot \max\{ (1 + 28 \gamma) / \zeta c \hspace{0.05cm}, 49 \alpha / \varepsilon c \zeta  \}$ we have:

\[\mathbb{P}\left[n_{1}(v; 1) < \frac{d(v)}{2} + 2\gamma \sqrt{d}  \mid   \mathcal{S}_{1/2} =s_0, d(v)=k  \right] \leq \frac{49 \alpha}{\Lambda c \zeta} \leq \varepsilon/2.\] 
Integrating over all possible choices of $s_0 \in \mathcal{E}_c^+$ and $k$ such that $\mathcal{N}(v)$ is realised, we obtain
$$\mathbb{P}\left[n_{1}(v; 1) < \frac{d(v)}{2} + 2\gamma \sqrt{d}  \mid \mathcal{E}_{c}^+, \mathcal{N}(v) \right] \leq \varepsilon/2. $$
But $\mathbb{P} \left[\mathcal{N}(v) \right] = 1-o(1)$, (which follows by a standard application of the Chernoff bound~\eqref{eq:Chernoff}), we deduce that for $n$ sufficiently large we have 
\[\mathbb{P}\left[n_{1}(v; 1) < \frac{d(v)}{2} + 2\gamma \sqrt{d}  \mid 
 \mathcal{E}_{c}^+\right] \leq \varepsilon. 
 \]
\end{proof}
\subsection{Minority Regime}\label{Minority_Regime}

We now work within the minority regime, proving Theorem \ref{Minority_Unanimous}. 
Recall that by~\eqref{Minority_Game}, a vertex will update to the state shared across the minority of its neighbours. In the event of a tie, the vertex will remain in its current state. In this section we will .
%
%\begin{theorem*}[\bf{\ref{Minority_Unanimous}}]
%
%
%For all $\varepsilon \in (0, 1]$ there exist $\Lambda,n_{0}$ such that for all $n > n_{0}$, if $p \geq \Lambda n^{-\frac{1}{2}}$, then $G(n,p)$ is such that with probability at least $1 - \varepsilon$, across the product space of $G(n,p)$ and $\mathcal{S}_{1/2}$, the vertices in $V_n$ following the minority dynamics rule, will unanimously have the same state after four rounds. For $t \geq 4,$ we have that $S_{t}(v) = \lvert M_{0} - \mathbbm{1}_{\{t  \equiv  1 \mod 2\} } \rvert$ for all vertices $v$.
%
%\end{theorem*}

We observe similarities with Theorem \ref{MajorityStable}, namely the fact that unanimity occurs, and is achieved within at most four rounds. However the system is no longer stable, but will become periodic with period two after unanimity is reached.  

We wish to relate the proof of Theorem \ref{Minority_Unanimous} to  Theorem \ref{MajorityStable}. We first start by showing that the first round of the minority game $m = \left(G(n,p), \mathcal{S}_{1/2}\right)^{<}$ can be approximated by a specific majority game which starts on the \emph{complementary configuration}. 
For a configuration $S$ on $V_n$ we define the \emph{complementary configuration} $\bar{S}$ as follows: 
for every $v \in V_{n}$ we set $\bar{S}(v) = 1 - S(v)$. 
Suppose we have an interacting node system $\mathcal{I} = \left(G, \mathcal{S}_{1/2}\right)^{*},$ where $* \in \{ < , > \}.$ For $v \in V_{n},$ we denote $S^{\mathcal{I}}_{t}(v)$ to be the strategy of a vertex $v$ in the game $\mathcal{I}$ at time $t$; similarly we define $n^{\mathcal{I}}_{t}(v; i) = \lvert \{u : S^{\mathcal{I}}_{t}(u) = i \} \rvert$. 
While these definitions are similar to their counterparts, we would like to emphasise the role of $\mathcal{I}$, as we will generally work with two different systems: a minority game $\mathcal{I}=m$ (when $*$ is $<$) and a majority game $\mathcal{I} = M$ (when $*$ is $>$). 
We now state the following lemma concerning the approximation of a minority game $m$ to a suitably designed majority game $M$. 
\begin{lemma}\label{coupling}
Let $G=(V,E)$ be a graph and $\mathcal{S}:V\to \{0,1\}$ be a configuration. Let $m = (G, \mathcal{S})^{<}$ be a minority game, and $M = (G, \mathcal{\bar{S}})^{>}$ be a majority game with initial configuration $\bar{\mathcal{S}}.$ If for $v \in V(G)$ we have that $n_{0}^{m}(v; 0) \neq n_{0}^{m}(v; 1),$ then $S^{m}_{1}(v) = {S}^{M}_{1}(v).$ If $n_{0}^{m}(v; 0) = n_{0}^{m}(v ; 1),$ then $S^{m}_{1}(v) = 1 -{S}^{M}_{1}(v).$
\end{lemma}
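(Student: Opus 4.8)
The plan is to analyze the single update step of the minority game $m=(G,\mathcal{S})^{<}$ at a fixed vertex $v$ and compare it, case by case, with the update of the majority game $M=(G,\bar{\mathcal{S}})^{>}$. The key observation is that complementing the configuration swaps the roles of the two states: since $S^M_0(u)=\bar{\mathcal{S}}(u)=1-\mathcal{S}(u)=1-S^m_0(u)$ for every $u$, we have $n^M_0(v;1)=n^m_0(v;0)$ and $n^M_0(v;0)=n^m_0(v;1)$. So the ``neighbourhood statistics'' of $M$ at time $0$ are exactly those of $m$ with the labels $0$ and $1$ interchanged.

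First I would dispose of the non-tie case. Suppose $n^m_0(v;0)\neq n^m_0(v;1)$; without loss of generality $n^m_0(v;1)<n^m_0(v;0)$ (the reverse case is symmetric). By the minority rule~\eqref{Minority_Game}, $S^m_1(v)=1$. On the $M$ side we have $n^M_0(v;1)=n^m_0(v;0)>n^m_0(v;1)=n^M_0(v;0)$, so the majority rule~\eqref{eq:maj_dyn} gives $S^M_1(v)=1$ as well. Hence $S^m_1(v)=S^M_1(v)$. The symmetric subcase $n^m_0(v;1)>n^m_0(v;0)$ gives $S^m_1(v)=0=S^M_1(v)$ in the same way. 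This establishes the first assertion.

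Next I would handle the tie case $n^m_0(v;0)=n^m_0(v;1)$. Then both the minority rule and the majority rule fall into their ``remain unchanged'' branch, so $S^m_1(v)=S^m_0(v)=\mathcal{S}(v)$ and $S^M_1(v)=S^M_0(v)=\bar{\mathcal{S}}(v)=1-\mathcal{S}(v)$. Therefore $S^m_1(v)=1-S^M_1(v)$, which is the second assertion.

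I do not expect any serious obstacle here: the lemma is a direct bookkeeping exercise once one records that complementation transposes the two neighbour-counts, and that both evolution rules treat a strict inequality identically up to relabelling while a tie freezes the current (hence complemented) state. The only point requiring a moment of care is making sure the tie branch is treated consistently: in a tie the two games agree on \emph{inaction}, but because their initial states are complements, inaction propagates the complementary relationship rather than equality — which is precisely why the two assertions of the lemma differ.
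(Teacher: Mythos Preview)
Your proof is correct and follows essentially the same approach as the paper: a direct case analysis on the relation between $n^m_0(v;0)$ and $n^m_0(v;1)$, using that complementation swaps the two neighbour counts. Your version is slightly more streamlined since you record the swap identity $n^M_0(v;j)=n^m_0(v;1-j)$ once at the outset and observe that in the strict-inequality case neither update rule depends on the current state, whereas the paper also cases on $S^m_0(v)$ (writing out one of four subcases and declaring the rest identical); but the substance is the same.
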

 
Lemma \ref{coupling} allows us to deduce the behaviour of a significant number of vertices in the first round of the minority process. The idea is to take a minority game, complement each of the vertex strategies, and then allow one round of the evolution to occur using the majority rules. As long as a vertex satisfies the condition $n_{0}^{m}(v ; 0) \neq n_{0}^{m}(v ; 1)$, it will have the same state as if it had just evolved using the minority rules on the original configuration. We refer to the additional condition $n_{0}^{m}(v ; 0) = n_{0}^{m}(v ; 1)$, as the \textit{equal neighbourhoods condition} (\textrm{ENC}). 

\begin{proof}[Proof of Lemma \ref{coupling}]

We first assume that $n_{0}^{m}(v; 0) \neq n_{0}^{m}(v; 1).$ We split our analysis into cases which depend on  both the current state of the vertex, along with which of $n_{0}^{m}(v; 0)$ and $n_{0}^{m}(v; 1)$ is larger. In all four cases the argument is identical; we simply must show that $S^{m}_{1}(v) = S^{M}_{1}(v).$

Suppose $S_{0}^{m}(v) = 0,$ and $n_{0}^{m}(v; 0) > n_{0}^{m}(v; 1).$ By applying the minority rules, we see that $S^{m}_{1}(v) = 1$. We now consider the complementary state, $\bar{S}^m_{0}(v) = S^{M}_{0}(v) = 1.$ As we have that $n_{0}^{m}(v; 0) > n_{0}^{m}(v; 1),$ then by the definition of complementary initial configuration, it must be the case that ${n}_{0}^{M}(v; 0) < {n}_{0}^{M}(v; 1).$ By applying the majority rules to the vertex $v$ we have that ${S}^{M}_{1}(v) = 1;$ therefore, $S^{m}_{1}(v) = {S}^{M}_{1}(v)$.

For the case where $n_{0}^{m}(v; 0) = n_{0}^{m}(v; 1),$ we observe that in both games $v$ has an equal of number of the vertices playing strategy one and zero in its neighbourhood. Therefore, it follows that $S^{m}_{0}(v) = S^{m}_{1}(v)$ and ${S}^{M}_{0}(v) = {S}^{M}_{1}(v).$ However, by the definition of complementary states we have that $S^{m}_{0}(v) = 1-{S}^{M}_{0}(v),$ and thus $S^{m}_{1}(v) = 1-{S}^{M}_{1}(v).$
\end{proof}

The main application of the above lemma is to connect the games $m$ and $M$ on $G(n,d/n)$ with 
initial configuration $\mathcal{S}_{1/2}$. It is at that point where  we apply Lemma \ref{Round1Majority} to the game $m$. However, we must consider which vertices satisfy the equal neighbourhoods condition. For each vertex $v \in V_{n},$ we say that $v \in \mathrm{ENC}$ if $n_{0}^{m}(v;0) = n_{0}^{m}(v; 1)$, and we define $\mathrm{EQ}(v) = \lvert \{w : w \in \mathrm{ENC} \} \cap N_{G(n,d/n)}(v) \rvert$. Let $\gamma$ be a positive constant. We say that a vertex $v \in V_n$ has a $\gamma$-\emph{decisive neighbourhood} in $G(n,d/n)$ if $\mathrm{EQ}(v) < \gamma \sqrt{d},$ and a vertex has a $\gamma$-\textit{abundant neighbourhood} in $G(n,d/n)$ 
if $n_{1}^{M}(v;1) \geq 2\gamma \sqrt{d} + d(v) / 2.$ We say that a vertex $v\in V_n$ is $\gamma$-\textit{good} if $v$ has a $\gamma$-abundant and a $\gamma$-decisive neighnourhood in $G(n,d/n)$. 
The following corollary illustrates the role of  $\gamma$-good vertices.
\begin{corollary}\label{Round1Equivalence}
Let $\gamma > 0$. 
If a vertex $v \in V_n$ is $\gamma$-good in $G(n,d/n)$ for a given initial configuration, 
then $S^{m}_{2}(v) = 0$.
\end{corollary}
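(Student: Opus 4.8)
The plan is to prove the statement deterministically: once $v$ is known to be $\gamma$-good, its strategy at step $2$ in the minority game is forced, and all of the probabilistic content (showing that a typical vertex is $\gamma$-good) is deferred to the proof of Theorem~\ref{Minority_Unanimous}. First I would invoke Lemma~\ref{coupling} with $G=G(n,d/n)$ and $\mathcal{S}=\mathcal{S}_{1/2}$ to transfer information from the majority game $M=(G(n,d/n),\bar{\mathcal{S}}_{1/2})^{>}$ to the minority game $m=(G(n,d/n),\mathcal{S}_{1/2})^{<}$ along the neighbourhood of $v$. Partition $N_{G(n,d/n)}(v)$ into $B:=N_{G(n,d/n)}(v)\cap\mathrm{ENC}$ and $A:=N_{G(n,d/n)}(v)\setminus\mathrm{ENC}$. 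Lemma~\ref{coupling} gives $S_1^m(w)=S_1^M(w)$ for every $w\in A$ and $S_1^m(w)=1-S_1^M(w)$ for every $w\in B$. Since $v$ has a $\gamma$-decisive neighbourhood, $|B|=\mathrm{EQ}(v)<\gamma\sqrt{d}$.

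Next I would compare the counts $n_1^m(v;1)$ and $n_1^M(v;1)$. Writing $n_1^m(v;1)=|\{w\in A: S_1^M(w)=1\}|+|\{w\in B: S_1^M(w)=0\}|$ and $n_1^M(v;1)=|\{w\in A: S_1^M(w)=1\}|+|\{w\in B: S_1^M(w)=1\}|$, the two quantities differ only on $B$, so $|n_1^m(v;1)-n_1^M(v;1)|\le|B|<\gamma\sqrt{d}$. Combining this with the $\gamma$-abundant property $n_1^M(v;1)\ge 2\gamma\sqrt{d}+d(v)/2$ yields $n_1^m(v;1)>\gamma\sqrt{d}+d(v)/2>d(v)/2$, and since $n_1^m(v;0)+n_1^m(v;1)=d(v)$ we get $n_1^m(v;1)>n_1^m(v;0)$. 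Applying the minority evolution rule~\eqref{Minority_Game} to the transition from step $1$ to step $2$, the strict inequality $n_1^m(v;1)>n_1^m(v;0)$ forces $v$ to adopt the minority strategy $0$, i.e. $S_2^m(v)=0$, which is exactly the claim.

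There is no real analytic obstacle here; the only point requiring care is the bookkeeping in the second step — correctly identifying which neighbours of $v$ have their round-one minority state flipped relative to the majority game, and verifying that the slack of at most $\gamma\sqrt{d}$ coming from $\mathrm{EQ}(v)$ is absorbed by the surplus of $2\gamma\sqrt{d}$ guaranteed by the $\gamma$-abundant condition (so that a strict majority for strategy $1$ in $v$'s neighbourhood survives the transfer). The genuinely probabilistic part — namely that a typical $v$ is $\gamma$-abundant (via Lemma~\ref{Round1Majority}, applied to $M$ after conditioning on the appropriate skew event) and $\gamma$-decisive (via a Chernoff-type bound on $\mathrm{EQ}(v)$, using that $\mathbb{P}(w\in\mathrm{ENC})=O(d^{-1/2})$ so that $\mathbb{E}\,\mathrm{EQ}(v)=O(\sqrt{d})$) — is left to the subsequent proof of Theorem~\ref{Minority_Unanimous}.
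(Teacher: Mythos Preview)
Your proof is correct and follows essentially the same route as the paper: invoke the coupling Lemma~\ref{coupling} on the neighbours of $v$, bound $|n_1^m(v;1)-n_1^M(v;1)|$ by $\mathrm{EQ}(v)<\gamma\sqrt d$ via $\gamma$-decisiveness, and use $\gamma$-abundance to conclude $n_1^m(v;1)>d(v)/2$, whence $S_2^m(v)=0$ by the minority rule. Your write-up is in fact slightly more explicit than the paper's about the final application of~\eqref{Minority_Game}, but the argument is the same.
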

\begin{proof} Let us abbreviate $G(n,d/n)$ by $G$.
Given an initial configuration on $V_n$, 
set up the systems $m$ and $M$ as in Lemma \ref{coupling}, and assume that $v\in V_n$ is $\gamma$-good vertex. We show that $n^{m}_{1}(v ; 1) > d(v)  / 2;$ we proceed by applying Lemma~\ref{coupling} to bound $n_{1}^{m}(v; 1)$ from below. This lemma implies that if $u \notin \mathrm{ENC}$, then $S_{1}^{m}(u) = S_{1}^{M}(u)$. Hence, for all $u \in N_{G}(v) \setminus \mathrm{ENC},$ if $S^{M}_{1}(u) = 1$, then $S^{m}_{1}(u) = 1.$ However for $w \in \mathrm{ENC} \cap N_{G}(v),$ we have that $S_{1}^{m}(w) = 1 - S_{1}^{M}(w).$ If we assume that for all $w \in \mathrm{ENC} \cap N_{G}(v)$ we have that $S_{1}^{M}(w) = 1,$ then we would minimise the size of $n^{m}_{1}(v;1)$ as in that case $S_{1}^{m}(w) = 0.$ As a direct consequence, we have that $n^{m}_{1}(v; 1) \geq n^{M}_{1}(v ; 1) - \mathrm{EQ}(v).$ Therefore, by combining this bound with the definitions of $\gamma$-abundance and $\gamma$-decisiveness, we have: 

\begin{eqnarray*} n^{m}_{1}(v; 1) &\geq& n^{M}_{1}(v; 1) - \mathrm{EQ}(v)   > n^{M}_{1}(v; 1) -  \gamma \sqrt{d}
\geq \frac{d(v)}{2} + 2\gamma \sqrt{d} - \gamma\sqrt{d} \\
&=& \frac{d(v)}{2} + \gamma \sqrt{d}. \qedhere 
\end{eqnarray*}
 
\end{proof}

\subsection{Bounding the size of $\mathrm{EQ}(v)$} In light of Corollary \ref{Round1Equivalence}, our aim is to show that there are a significant number of good vertices. We first show that with high probability, there are a sufficient number of vertices with a $\gamma$-decisive neighbourhood. The proof of this bound will require us to invoke the Local Limit Theorem for sums of Bernoulli-distributed random variables, which follows from Theorem VII.6 p.197 in~\cite{petrov2012sums}. We state this result as follows.
\begin{theorem}[\cite{petrov2012sums}]\label{Local_Limit}
Let $X_1, \ldots X_n$ be independent identically distributed Bernoulli-distributed random variables with $\mathbb{E}(X_1)=\mu$ and $\mathrm{Var} (X_1) =\mu -\mu^2=: \sigma^2 >0$. 
Let also $X = \sum_{i=1}^{n} X_{i}$. There exists $\rho$ depending on $\mu$ for which:
$$ \sup_{i \in \mathbb{N}_{0} } \left\lvert \sqrt{\mathrm{Var}[X]} \cdot \mathbb{P}[X = i] - \frac{1}{2\pi} \exp \left( - \frac{ {\left( i - \mathbb{E}[X] \right)}^2}{2 \mathrm{Var}[X]} \right) \right\rvert < \frac{\rho}{\sqrt{\mathrm{Var}[X]}}.$$
\end{theorem}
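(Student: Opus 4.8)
Since the $X_j$ are i.i.d.\ Bernoulli$(\mu)$, the sum $X$ has the law $\mathrm{Bin}(n,\mu)$, so the assertion is the local central limit theorem for the binomial distribution and the natural tool is Fourier inversion on the torus $[-\pi,\pi]$. The plan is to start from the exact inversion formula
$$\mathbb{P}[X=i]=\frac{1}{2\pi}\int_{-\pi}^{\pi}e^{-\mathrm{i}ti}\,\varphi(t)^{n}\,dt,\qquad \varphi(t):=\mathbb{E}\bigl[e^{\mathrm{i}tX_{1}}\bigr]=1-\mu+\mu e^{\mathrm{i}t},$$
together with the Gaussian analogue, obtained by Fourier inversion of a normal density,
$$\frac{1}{\sqrt{2\pi\,\mathrm{Var}[X]}}\exp\!\Bigl(-\frac{(i-\mathbb{E}[X])^{2}}{2\,\mathrm{Var}[X]}\Bigr)=\frac{1}{2\pi}\int_{\mathbb{R}}e^{-\mathrm{i}ti}\,e^{\mathrm{i}t\,\mathbb{E}[X]}\,e^{-t^{2}\mathrm{Var}[X]/2}\,dt$$
(here I read the constant in front of the exponential in the statement as $\tfrac{1}{\sqrt{2\pi}}$, which is what survives once the factor $\sqrt{\mathrm{Var}[X]}$ is carried over). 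After the substitution $t=s/\sqrt{\mathrm{Var}[X]}$ and multiplication through by $\sqrt{\mathrm{Var}[X]}=\sqrt{n}\,\sigma$, the quantity to be estimated becomes $\tfrac{1}{2\pi}$ times the difference of $\int_{|s|\le \pi\sqrt{\mathrm{Var}[X]}}e^{\mathrm{i}(\mathbb{E}[X]-i)s/\sqrt{\mathrm{Var}[X]}}\,\varphi\bigl(s/\sqrt{\mathrm{Var}[X]}\bigr)^{n}\,ds$ and the corresponding integral of $e^{\mathrm{i}(\mathbb{E}[X]-i)s/\sqrt{\mathrm{Var}[X]}}e^{-s^{2}/2}$ over all of $\mathbb{R}$.

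Next I would split the $s$-range into three pieces. On the innermost window $|s|\le n^{1/8}$ I would Taylor-expand $n\log\varphi\bigl(s/\sqrt{\mathrm{Var}[X]}\bigr)$ about $s=0$: the linear and quadratic terms reproduce exactly $\mathrm{i}\,\mathbb{E}[X]\,s/\sqrt{\mathrm{Var}[X]}-s^{2}/2$, the cubic term contributes $-\mathrm{i}\,\lambda_{3}s^{3}/(6\sqrt{n})$ with $\lambda_{3}$ a constant built from the third cumulant of Bernoulli$(\mu)$, and the remainder is $O(s^{4}/n)$; exponentiating and subtracting the Gaussian integrand leaves a term bounded by $\tfrac{C(\mu)}{\sqrt{n}}(|s|^{3}+s^{6})e^{-s^{2}/2}$, which integrates to $O(1/\sqrt{n})$ uniformly in $i$ (one does not even need the odd-function cancellation of the cubic term for the stated, non-optimal rate). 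On the middle window $n^{1/8}<|s|\le\delta\sqrt{n}$, with $\delta=\delta(\mu)$ small, I would use $|\varphi(t)|\le e^{-\sigma^{2}t^{2}/4}$ near the origin, so that $|\varphi(s/\sqrt{\mathrm{Var}[X]})^{n}|\le e^{-s^{2}/4}$ and also $e^{-s^{2}/2}\le e^{-s^{2}/4}$; both integrands, hence their difference, are bounded by $e^{-s^{2}/4}$ and contribute only $O(e^{-n^{1/4}/4})$. On the outer window $\delta\sqrt{n}<|s|\le\pi\sqrt{\mathrm{Var}[X]}$ the argument $t=s/\sqrt{\mathrm{Var}[X]}$ stays in a compact subset of $(0,\pi]$, where $|\varphi(t)|^{2}=1-2\mu(1-\mu)(1-\cos t)$ is bounded away from $1$; hence $|\varphi(t)^{n}|$ is exponentially small and, as the range has length $O(\sqrt{n})$, its total contribution is negligible; the tail $|s|>\pi\sqrt{\mathrm{Var}[X]}$ of the Gaussian integral is exponentially small for the same trivial reason. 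Collecting the three pieces and absorbing all $\mu$-dependent constants into a single $\rho=\rho(\mu)$ gives the claimed uniform bound $\rho/\sqrt{\mathrm{Var}[X]}$.

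The one place where the hypothesis $\sigma^{2}=\mu(1-\mu)>0$, i.e.\ $\mu\in(0,1)$, is genuinely used is the outer window: it is exactly what guarantees $|\varphi(t)|<1$ strictly for every $t\in(0,2\pi)$, equivalently that the common lattice span of the $X_{j}$ equals $1$, so that the inversion integral runs over $[-\pi,\pi]$ and $\varphi(t)^{n}$ decays exponentially away from the origin. I expect the main technical obstacle to be purely bookkeeping: keeping the error estimates on the innermost window uniform in the unbounded index $i$ while tracking the cubic Edgeworth term through the expansion of $\log\varphi$ — in particular making sure that the phase factor $e^{\mathrm{i}(\mathbb{E}[X]-i)s/\sqrt{\mathrm{Var}[X]}}$, which is all that depends on $i$, is never differentiated or bounded by anything other than $1$. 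Nothing here is structurally hard; one could alternatively just quote Gnedenko's local limit theorem for non-degenerate lattice distributions with finite third moment (as in~\cite{petrov2012sums}), but the Fourier computation sketched above is essentially self-contained.
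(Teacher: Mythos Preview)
The paper does not actually prove this statement: Theorem~\ref{Local_Limit} is quoted verbatim as a consequence of Theorem~VII.6 in Petrov~\cite{petrov2012sums} and is used as a black box. So there is no ``paper's own proof'' to compare against.

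That said, your Fourier-inversion sketch is exactly the standard argument underlying the cited result in Petrov, and it is correct in outline: the exact inversion formula for $\mathbb{P}[X=i]$, the rescaling $t=s/\sqrt{\mathrm{Var}[X]}$, and the three-window decomposition (Taylor expansion of $\log\varphi$ near the origin, the sub-Gaussian bound $|\varphi(t)|\le e^{-\sigma^2 t^2/4}$ on a small fixed window, and the strict inequality $|\varphi(t)|<1$ away from $0$ coming from $\mu\in(0,1)$) are precisely the ingredients of Gnedenko's local limit theorem for lattice distributions. Your observation that the $i$-dependence enters only through a unimodular phase, so all bounds are automatically uniform in $i$, is the right way to handle the supremum. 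The one cosmetic point: as you note, the $\tfrac{1}{2\pi}$ in the displayed statement should be $\tfrac{1}{\sqrt{2\pi}}$ for the Gaussian density; this is a typo in the paper's transcription of the theorem, not an issue with your argument.
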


The following lemma provides an upper bound  on the number of vertices in the neighbourhood of $v\in V_n$ in 
$G(n,p)$ which satisfy the equal neighbourhoods condition. 
\begin{lemma}\label{equal_neighbourhoods}
Consider $ G(n, p)$ with $np=:d = d(n) \to \infty$ as $n\to \infty$ and let $v \in V_n$.  
For every $\varepsilon > 0,$ there exist positive constants $\gamma$ and $n_0$ such that for all $n>n_0$
we have
$$\mathbb{P}[ \mathrm{EQ}(v) \geq \gamma \sqrt{d} ] < \varepsilon.$$  
\end{lemma}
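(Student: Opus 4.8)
The plan is to bound $\mathbb{E}[\mathrm{EQ}(v)]$ by $O(\sqrt d)$ and then apply Markov's inequality; no concentration statement for $\mathrm{EQ}(v)$ itself is needed, which is fortunate, because the events $\{w\in\mathrm{ENC}\}$ for the various neighbours $w$ of $v$ are correlated (they depend on the shared strategy $\mathcal{S}_{1/2}(v)$ and on possible common neighbours), so a Chernoff-type argument would be awkward.

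Writing $\mathrm{EQ}(v)=\sum_{w\in V_n\setminus\{v\}}\mathbbm{1}[w\sim v]\,\mathbbm{1}[w\in\mathrm{ENC}]$, linearity of expectation gives $\mathbb{E}[\mathrm{EQ}(v)]=\sum_{w\neq v}p\cdot\mathbb{P}[w\in\mathrm{ENC}\mid w\sim v]$, so it suffices to show $\mathbb{P}[w\in\mathrm{ENC}\mid w\sim v]=O(1/\sqrt d)$ uniformly in $w$, for $n$ large. Fixing such a $w$ and working conditionally on $w\sim v$, I would first reveal the number $m':=d_{V_n\setminus\{v\}}(w)\sim\mathrm{Bin}(n-2,p)$ of neighbours of $w$ other than $v$, and also the strategy $\mathcal{S}_{1/2}(v)$. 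Conditionally on these, the number of neighbours of $w$ playing strategy $0$ equals $\mathbbm{1}[\mathcal{S}_{1/2}(v)=0]+Z$ with $Z\sim\mathrm{Bin}(m',1/2)$ independent of everything already exposed (the relevant edges and coin flips are disjoint from those used, and $\mathcal{S}_{1/2}$ is independent of $G(n,p)$). A one-line computation shows that $w\in\mathrm{ENC}$ holds precisely when $2Z=m'+\mathbbm{1}[\mathcal{S}_{1/2}(v)=1]-\mathbbm{1}[\mathcal{S}_{1/2}(v)=0]$, i.e. when $Z$ takes one prescribed integer value (possible only for $m'$ odd); in any case $\mathbb{P}[w\in\mathrm{ENC}\mid w\sim v,\,m']\le\max_{j}\mathbb{P}[\mathrm{Bin}(m',1/2)=j]$.

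The Local Limit Theorem (Theorem~\ref{Local_Limit}, with $\mu=1/2$) — or, more elementarily, Stirling's formula applied to the central binomial coefficient — gives $\max_j\mathbb{P}[\mathrm{Bin}(m,1/2)=j]\le C'm^{-1/2}$ for an absolute constant $C'$ and every $m\ge1$. Splitting the expectation over $m'$, the range $m'\ge d/2$ contributes at most $C'\sqrt{2/d}$, while the range $m'<d/2$ contributes at most $\mathbb{P}[\mathrm{Bin}(n-2,p)<d/2]\le e^{-\Omega(d)}\le 1/\sqrt d$ for $n$ large, by the Chernoff bound~\eqref{eq:Chernoff}. Hence $\mathbb{P}[w\in\mathrm{ENC}\mid w\sim v]\le C''/\sqrt d$ for a constant $C''$ and all large $n$, and summing over $w$,
$$\mathbb{E}[\mathrm{EQ}(v)]\ \le\ (n-1)\cdot\frac{d}{n}\cdot\frac{C''}{\sqrt d}\ \le\ C''\sqrt d .$$
Markov's inequality then yields $\mathbb{P}[\mathrm{EQ}(v)\ge\gamma\sqrt d]\le C''/\gamma$, so choosing $\gamma:=C''/\varepsilon$ and $n_0$ large enough for all the above estimates to hold completes the proof. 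The only slightly delicate point is the conditioning bookkeeping in the second paragraph — checking that, after revealing $m'$ and $\mathcal{S}_{1/2}(v)$, the split of the remaining neighbours of $w$ is still an untouched $\mathrm{Bin}(m',1/2)$ — but this is immediate from the independence of $\mathcal{S}_{1/2}$ and $G(n,p)$, so I do not anticipate a genuine obstacle.
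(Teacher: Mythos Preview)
Your proposal is correct and follows essentially the same approach as the paper's own proof: both bound $\mathbb{E}[\mathrm{EQ}(v)]$ by $O(\sqrt d)$ via the decomposition $\mathbb{E}[\mathrm{EQ}(v)]=\sum_{w\neq v}p\cdot\mathbb{P}[w\in\mathrm{ENC}\mid w\sim v]$, condition on the degree of $w$ outside $\{v\}$ and on $\mathcal{S}_{1/2}(v)$, apply an anticoncentration bound (the Local Limit Theorem) to the resulting $\mathrm{Bin}(m',1/2)$ point mass, and finish with Markov's inequality. The only cosmetic differences are that the paper uses Chebyshev and the window $|k-\mu'|\le d^{3/4}$ for the typical degree range, whereas you use Chernoff and the simpler split $m'\ge d/2$; your version is arguably a touch cleaner.
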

\begin{proof}
For a vertex $v \in V_n$ we bound the value of $\mathbb{E}[\mathrm{EQ}(v)]$. Without loss of generality we may also assume that $S_{0}(v) = 1$. For a vertex $w\in V_n$ to belong to $\mathrm{EQ}(v),$ we must have that $w \in N_{G(n,p)}(v)$ and $w \in \mathrm{ENC}.$ Therefore, we write 
\begin{eqnarray*}
\mathbb{E}[\mathrm{EQ}(v) \mid S_{0}(v) = 1] &=& \mathbb{E}\left[\sum_{w :  w \neq v} \mathbbm{1}_{ \{w \sim v\} } \mathbbm{1}_{\{w \in \mathrm{ENC} \}} \mid S_{0}(v) = 1\right] \\
&=& \sum_{w :  w \neq v} \mathbb{P} \left[ \{ w \sim v \} \cap \{w \in \mathrm{ENC}\} \mid S_{0}(v) = 1\right].
\end{eqnarray*}
By conditioning on the event $\{ w \sim v \},$ we have the following:
\begin{eqnarray*}
\mathbb{E}[\mathrm{EQ}(v)\mid S_{0}(v) = 1] &=& \sum_{w : w \neq v  } \mathbb{P}[w \sim v ]\mathbb{P}[w \in \mathrm{ENC} \mid 
 w \sim v, S_{0}(v) = 1] \\
  &=& \frac{d}{n}\sum_{w :  w \neq v}\mathbb{P}[w \in \mathrm{ENC} \mid w \sim v, S_{0}(v) = 1 ].
 \end{eqnarray*}
We now condition on the size of $N_{G(n,p)}(w) \backslash \{ v \}.$ The size of  $N_{G(n,p)}(w) \backslash \{ v \}$ can vary from zero to $n-2.$ We set $\mu' = \mathbb{E}\left[\mathrm{EQ}(v)\right]$ and let 
$\mathcal{N}_{k}(w)$ be the event that $| N_{G(n,p)}(w) \setminus \{v \} | = k$. 
Note that for $w \in \mathrm{ENC}$, we necessarily have $k$ is odd.  
Applying the law of total probability and bounding the upper and lower extremes of $k,$ we have that:
\begin{eqnarray}\lefteqn{\mathbb{P}[w \in \mathrm{ENC} \mid w \sim v, S_{0}(v) = 1] \leq  \mathbb{P}\bigg[ \Bigl\lvert \left\lvert N_{G(n,p)}(w) \backslash \{v \} \right\rvert - \mu' \Bigr\rvert  \geq d^{3/4} \bigg] +} \nonumber \\ 
&&\sum_{k \ :\  |k - \mu'| \leq d^{3/4}, k \ odd}\mathbb{P}\big[ \lvert N_{G(n,p)}(w) \setminus \{v\} \rvert = k\big] \mathbb{P}\left[ w \in \mathrm{ENC} \ \Big\vert \{ w \sim v \} \cap \mathcal{N}_{k}(w) \cap \{S_{0}(v) = 1 \} 
 \right]. \nonumber \\
 & & \label{eq:enc}
\end{eqnarray}

%$$ \mathbb{E}[\mathrm{EQ}(v)] = \frac{d}{n} \sum_{w :  w \neq v} \sum_{k = 0}^{n-2} \mathbb{P}\big[ \lvert N_{G}(w) \backslash \{v\} \rvert = k\big] \mathbb{P}\left[ w \in \mathrm{ENC} \middle\vert 
%\begin{array}{c}
% w \sim v, \\
%\lvert N_{G}(w) \backslash \{v\} \rvert = k
%\end{array} \right].
%$$

 As we have conditioned on $\{ v \sim w \},$ we have that $\lvert N_{G(n,p)}(w) \backslash \{v \} \rvert \sim \mathrm{Bin}(n-2, d /n).$ So, $\mathrm{Var}[\lvert N_{G(n,p)}(w) \backslash \{v \} \rvert] = (n-2)(d/n)(1-d/n)$ and therefore  $\mathrm{Var}[\lvert N_{G(n,p)}(w) \backslash \{v \} \rvert] <  d$. By Chebyshev's inequality,
 \begin{equation}\label{eq:cheb}
 \mathbb{P}\bigg[ \Big\lvert \left\lvert N_{G(n,p)}(w) \backslash \{v \} \right\rvert  - \mu' \Big\rvert  \geq d^{3/4} \bigg] \leq \frac{d}{d^{6/4}} = \frac{1}{\sqrt{d}}. 
 \end{equation}
%\begin{multline*}
%\mathbb{E}[\mathrm{EQ}(v)] \leq \frac{d}{n}\sum_{w %\neq v} \Bigg( \mathbb{P}\bigg[ \Bigl\lvert %\left\lvert N_{G}(w) \backslash \{v \} \right\rvert %- \mu \Bigr\rvert  \geq d^{3/4} \bigg] + \\ \sum_{k %= \mu  - d^{3/4}}^{\mu + d^{3/4}} \mathbb{P}\big[ %\lvert N_{G}(w) \setminus \{v\} \rvert = k\big] %\mathbb{P}\left[ w \in \mathrm{ENC} \ \middle\vert
%\begin{array}{c}
% w \sim v, \\
%\lvert N_{G}(w) \backslash \{v\} \rvert = k
%\end{array} \right]  \Bigg) .
%\end{multline*}
Recall that $\mathcal{N}_{k}(w)$ is the event that $\lvert N_{G(n,p)}(w) \backslash \{v \} \rvert = k$.
It follows from~\eqref{eq:cheb} that:
\begin{eqnarray*}
\lefteqn{\mathbb{E}[\mathrm{EQ}(v) \mid S_{0}(v) = 1 ] \leq} \\
&& \frac{d}{n} \sum_{w :  w \neq v} \left( \frac{1}{\sqrt{d}} +
\sum_{k:  |k -\mu' | \leq d^{3/4}, k \ odd} \mathbb{P}\big[ \mathcal{N}_{k}(w) \big] \mathbb{P}\left[ w \in \mathrm{ENC} \Big\vert  \{ w \sim v \} \cap
\mathcal{N}_{k}(w) \cap \{S_{0}(v) = 1\}
 \right]  \right). 
 \end{eqnarray*}

We now turn our attention to the range where $\mu - d^{3/4} \leq k \leq \mu + d^{3/4}.$ We wish to apply Theorem \ref{Local_Limit} to bound $\mathbb{P}\left[w \in \mathrm{ENC} \mid \{ w \sim v \} \cap \mathcal{N}_{k}(w)\cap \{S_{0}(v) = 1\}\right].$ For a vertex $w \in N_{G(n,p)}(v),$ we define $X_{w}^{+} = \Big\lvert \{ u \in N_{G(n,p)}(w) \backslash \{v \} : S_{0}(u) = 1  \} \Big\rvert.$  As we have conditioned on $v \sim w$ and $S_{0}(v) = 1$, for odd $k$ the following holds:
\begin{eqnarray}  
\lefteqn{\mathbb{P}\left[ w \in \mathrm{ENC} \ \middle\vert \ \{ w \sim v \} \cap \mathcal{N}_{k}(w)
\cap \{ S_{0}(v) = 1\}\right]=} \nonumber \\ 
& &  \mathbb{P} \left[ X_w^{+} = (k-1)/2  \ \Big\vert \ \{ w \sim v \} \cap \mathcal{N}_{k}(w) \cap \{S_{0}(v) = 1 \}\right]. \label{eq:enc_prob}
\end{eqnarray}

Now for a given $k,$ suppose we condition on the event $\mathcal{N}_{k}(w).$ By conditioning on $\mathcal{N}_{k}(w)$, we observe that $X_{w}^{+} \sim \mathrm{Bin}(k , 1/2)$; so $\mathrm{Var}[{X_{w}^{+}}] = k/4.$ We now apply Theorem \ref{Local_Limit}, to bound the probability of the event given by~\eqref{eq:enc_prob}: there exits a constant $\rho$ such that for all $k$ we have:
\begin{eqnarray}
\lefteqn{\mathbb{P} \left[ X_{w}^{+} = \frac{k-1}{2} \middle\vert \ \{ w \sim v \} \cap \mathcal{N}_{k}(w) \cap \{ S_0(v)=1\} \right] \leq } \nonumber \\
& & \frac{\rho}{\mathrm{Var}\left[ X_{k}^{+}\right]} + \frac{1}{\sqrt{2 \pi \mathrm{Var}\left[ X_{k}^{+}\right]}}e^{-1 / \left( 8 \mathrm{Var}\left[ X_{k}^{+}\right] \right)}
 \leq   \frac{2 \rho}{ \sqrt{\mathrm{Var}\left[ X_{k}^{+}\right]}} = \frac{4 \rho}{\sqrt{k}} . \label{eq:LLT_result}
\end{eqnarray}
Using~\eqref{eq:enc_prob} and~\eqref{eq:LLT_result} in~\eqref{eq:enc}, we finally deduce that
\begin{eqnarray*} 
\lefteqn{\mathbb{E}[\mathrm{EQ}(v)\mid S_0(v)=1]  \leq}\\
&&  \frac{d(n-1)}{n}  \left(\frac{1}{\sqrt{d}} + \max_{k  \ : \  \lvert k - \mu' \rvert \leq  d^{3/4}, k \ odd} \left\{ \mathbb{P} \left[ X_{k}^{+} = (k-1)/2  \ \Big\vert \ \{ w \sim v \} \cap \mathcal{N}_{k}(w)  \cap \{S_0(v)=1\}\right]\right\} \right). 
\end{eqnarray*}
Using the bound~\eqref{eq:LLT_result}, the above expression becomes
$$\mathbb{E}[\mathrm{EQ}(v)\mid S_0 (v) =1] \leq \frac{d(n-1)}{n}\left(\frac{1}{\sqrt{d}} + \frac{4 \rho}{ {\sqrt{ \mu - d^{3/4}}}} \right).$$
A similar argument gives the same upper bound on $\mathbb{E}[\mathrm{EQ}(v)\mid S_0 (v) =0]$.

Since $\mu' = \Theta(d),$ therefore there exists a constant $\kappa,$ such that for sufficiently large $d$ we have that $\mu' - d^{3/4} \geq \kappa d.$ Letting $\Gamma = 2( \sqrt{\kappa} + 4 \rho) / \sqrt{\kappa}$, 
for sufficiently large $d$ we have that:
$$\mathbb{E}[\mathrm{EQ}(v)] \leq \frac{\Gamma d }{\sqrt{d}} = \Gamma \sqrt{d}.$$
Fix $\varepsilon > 0$ and define $\gamma = \Gamma / \varepsilon.$ Then by Markov's inequality 
\[ \mathbb{P} \left[ \mathrm{EQ}(v) \geq \gamma \sqrt{d}\right] <  \frac{\mathbb{E}[\mathrm{EQ}(v)]}{\gamma} \leq \varepsilon. \qedhere \]
\end{proof}

\subsection{A bound on the number of good vertices} We now have a suitable bound on $\mathrm{EQ}(v),$ which holds with high probability. Furthermore, we would like to show that there are a significant number of good vertices. We denote the set of $\gamma$-good vertices in $G(n,p)$ as $\mathrm{GD}_\gamma$. We recall that a vertex $v\in V_n$ of $G(n,p)$ is $\gamma$-good if  $v$ has a $\gamma$-abundant and a $\gamma$-decisive neighbourhood. The following corollary asserts that our node system will have a significant number of good vertices. We also recall the definition of the definition of the event $\mathcal{E}_{c}^-$: we say that the event $\mathcal{E}_{c}^-$ occurs if $N_{0} - P_{0} \geq 2c \sqrt{n}$, where $P_{0} = \sum_{v \in V_{n}} S_{0}(v)$ and $N_0 = n - P_0$. 
\begin{corollary}\label{TwoRoundZero}
Let $\varepsilon > 0$ and $m = (G(n,d/n), \mathcal{S}_{1/2})^{<}$, where 
$d > \Lambda n^{1/2}$. Suppose that $\mathcal{E}_{c}^-$ has occurred for some constant $c>0$. Then there exist positive constants $\Lambda$ and $ \gamma$, such that for all $n$ large enough we have with probability at least $1 - \varepsilon$ that $ \lvert \mathrm{GD}_\gamma \rvert \geq n(1 - \varepsilon).$
\end{corollary}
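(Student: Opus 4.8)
The plan is to show that a fixed vertex fails to be $\gamma$-good with small probability, and then conclude by a first--moment estimate followed by Markov's inequality; all the real probabilistic work is already contained in Lemma~\ref{Round1Majority} and Lemma~\ref{equal_neighbourhoods}, so the corollary is essentially a union bound over the two failure modes (``not $\gamma$-abundant'' and ``not $\gamma$-decisive'') together with an averaging argument.

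First I would set up the correspondence between the minority game $m=(G(n,d/n),\mathcal{S}_{1/2})^{<}$ and the majority game $M=(G(n,d/n),\bar{\mathcal{S}}_{1/2})^{>}$ from Lemma~\ref{coupling}. Conditioning on $\mathcal{E}_c^-$ for $m$ means $|N_0|-|P_0|\ge 2c\sqrt n$; passing to the complementary configuration swaps the roles of the two states, so $|\bar P_0|-|\bar N_0|\ge 2c\sqrt n$, i.e.\ this is exactly the event $\mathcal{E}_c^+$ for $M$. Since $(G(n,d/n),\bar{\mathcal{S}}_{1/2})$ has the same law as $(G(n,d/n),\mathcal{S}_{1/2})$, Lemma~\ref{Round1Majority} applies to $M$ and gives, for the constant $\gamma$ to be fixed below and $d\ge\Lambda n^{1/2}$ with $\Lambda=\Lambda(\gamma,\eps_1)$, that for every $v\in V_n$
$$\mathbb{P}\big[v\text{ is not }\gamma\text{-abundant}\mid\mathcal{E}_c^-\big]=\mathbb{P}\Big[n_1^M(v;1)<\tfrac{d(v)}{2}+2\gamma\sqrt d\ \Big|\ \mathcal{E}_c^+\Big]<\eps_1,$$
where $\eps_1>0$ is a small parameter (a constant multiple of $\eps^2$) to be chosen at the very end.

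Next I would handle $\gamma$-decisiveness. Lemma~\ref{equal_neighbourhoods} is stated without conditioning on $\mathcal{E}_c^{\pm}$, but $\mathcal{E}_c^-$ depends only on $\mathcal{S}_{1/2}$ and, by Lemma~\ref{intial_skew} together with the symmetry of $\mathcal{S}_{1/2}$ (or directly by the Local Limit Theorem, Theorem~\ref{Local_Limit}), has probability bounded below by a positive constant $p_c$ depending only on $c$. Choosing $\gamma=\gamma(\eps_1)$ as in Lemma~\ref{equal_neighbourhoods} so that $\mathbb{P}[\mathrm{EQ}(v)\ge\gamma\sqrt d]<\eps_1$ for all large $n$, we obtain
$$\mathbb{P}\big[v\text{ is not }\gamma\text{-decisive}\mid\mathcal{E}_c^-\big]\le\frac{\mathbb{P}[\mathrm{EQ}(v)\ge\gamma\sqrt d]}{\mathbb{P}[\mathcal{E}_c^-]}<\frac{\eps_1}{p_c}.$$
It is this $\gamma$ that is fed into Lemma~\ref{Round1Majority} in the previous step, so that both $\gamma$ and $\Lambda$ depend only on $\eps$.

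Finally I would combine the two bounds. By the union bound, $\mathbb{P}[v\notin\mathrm{GD}_\gamma\mid\mathcal{E}_c^-]<\eps_1(1+p_c^{-1})=:\eps_2$ for every $v\in V_n$, hence $\mathbb{E}\big[\,|V_n\setminus\mathrm{GD}_\gamma|\ \big|\ \mathcal{E}_c^-\big]<\eps_2 n$, and Markov's inequality gives $\mathbb{P}\big[\,|V_n\setminus\mathrm{GD}_\gamma|\ge\eps n\ \big|\ \mathcal{E}_c^-\big]<\eps_2/\eps$. Taking $\eps_1$ small enough that $\eps_2\le\eps^2$ makes this at most $\eps$, i.e.\ $\mathbb{P}[\,|\mathrm{GD}_\gamma|\ge n(1-\eps)\mid\mathcal{E}_c^-]\ge1-\eps$, as required. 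The only genuinely delicate point is the bookkeeping of the conditioning: verifying that the switch to the complementary configuration turns $\mathcal{E}_c^-$ (for $m$) into $\mathcal{E}_c^+$ (for $M$), so that Lemma~\ref{Round1Majority} is applicable, and that conditioning on $\mathcal{E}_c^-$ inflates the unconditional bound of Lemma~\ref{equal_neighbourhoods} by only the bounded factor $p_c^{-1}$. Everything else is a routine first-moment computation.
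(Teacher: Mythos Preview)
Your proposal is correct and follows essentially the same approach as the paper: set up the complementary majority game, use Lemma~\ref{Round1Majority} for abundance and Lemma~\ref{equal_neighbourhoods} for decisiveness, union-bound the two failure modes, and finish with linearity of expectation plus Markov. In fact you are slightly more careful than the paper on one point: you explicitly account for the fact that Lemma~\ref{equal_neighbourhoods} is stated unconditionally, and you pay the bounded factor $p_c^{-1}$ to convert it into a bound conditional on $\mathcal{E}_c^-$, whereas the paper silently mixes a conditional bound (for abundance) with an unconditional one (for decisiveness) in the same union bound.
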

\begin{proof}
We fix $\varepsilon > 0$ and again define the  majority game $M = (G(n,d/n), \bar{\mathcal{S}}_{1/2})^{>}$ as above. As we have conditioned on $\mathcal{E}_c^-$ in $m,$ then $M$ starts with an initial configuration which satisfies $\mathcal{E}_c^+$. By Lemma~\ref{equal_neighbourhoods}, there exists a constant $\gamma$ such that for all $n$ sufficiently large and all $v \in V_{n},$ we have that $\mathbb{P}\left[\mathrm{EQ}_\gamma (v)  \geq \gamma \sqrt{d} \right] < \varepsilon^{2} / 2$. Also, by Lemma~\ref{Round1Majority}, we may select $\Lambda$ large enough, such that for all $n$ sufficiently large and $v \in V_{n}$, we have that $\mathbb{P}\left[ n^{M}_{1}(v; 1) < d(v) / 2  + 2 \gamma \sqrt{d} \mid \mathcal{E}_c^+ \right] < \varepsilon^{2} / 2$.  

Now, we denote the events that $v$ has a $\gamma$-decisive neighbourhood by $D_{\gamma}(v)$ and that 
$v$ has an $\gamma$-abundant neighbourhood by $A_{\gamma}(v).$  Note that  $D_\gamma^{c} (v)$ and $A_\gamma^{c} (v)$ are the events of the previous paragraph for this particular $\gamma$. 
By the union bound, we have that:
$$\mathbb{P}[v \notin \mathrm{GD}_\gamma] = \mathbb{P}[A_\gamma^{c}(v) \cup D_\gamma^{c} (v)] \leq \mathbb{P}[A_\gamma^{c}(v)] + \mathbb{P}[D_\gamma^{c} (v)] < \varepsilon^{2}.$$

By taking a sum across all vertices, and applying the above inequality we have that $\mathbb{E}\left[ \lvert V_{n} \setminus \mathrm{GD}_\gamma \rvert \right] \leq \varepsilon^{2}n,$ where $\lvert V_{n} \setminus \mathrm{GD}_\gamma \rvert$ is the number of vertices which are not good. By Markov's inequality, we have that $\mathbb{P}[ \lvert V_{n} \setminus \mathrm{GD}_\gamma \rvert > \varepsilon n] \leq \varepsilon.$ Therefore, with probability at least $1 - \varepsilon,$ we have that $\lvert \mathrm{GD}_\gamma \rvert \geq n ( 1 - \varepsilon).$  
\end{proof}
Now, if there are at least $(1 - \varepsilon)n$ good vertices (which occurs with high probability), then by Corollary~\ref{Round1Equivalence} we have that $\lvert N_{2} \rvert \geq (1- \varepsilon)n$.
 
\subsection{The final two rounds}

 In the remaining two rounds we claim that unanimity will occur.  We note that as a consequence of Corollary \ref{TwoRoundZero}, at time $t = 2$ our node system satisfies the hypothesis  of Lemma \ref{Sparse_Algorithm}. However, as we are working within a denser regime, it turns out that 
 unanimity can be reached in the entire $G(n,p)$ much faster than $\beta \log{n}$ rounds. 
 We state a result from \cite{fountoulakis2019resolution}, which concerns the rapid formation of unanimity for majority dynamics, wherein one of the initial strategies has a linear majority.  We recall the definitions $P_{t} = \{ v \in V_{n} : S_{t}(v) = 1 \},$ and $N_{t} = \{ v \in V_{n} : S_{t}(v) = 0 \}.$ Suppose that $\mathcal{S}$ is an initial configuration of vertex states on $V_{n},$ and $\delta\in (0,1)$ is fixed. We say that $\mathcal{S} \in \hat{\mathcal{S}_{\delta}},$ if we have that $\lvert N_{0} \rvert \geq (1 - \delta)n$ in $\mathcal{S}.$ Thus $\hat{\mathcal{S}_{\delta}}$ is the collection of all initial vertex configurations where there are at least $( 1- \delta)n$ vertices in the zero state. We now consider the following result considering the two round evolution of games utilising initial states belonging to $\hat{\mathcal{S}_{\delta}}$ for sufficiently small $\delta.$

\begin{lemma}\label{Final2RoundEquiv}
Let $p \geq \Lambda n^{-\frac{1}{2}}$ and $\delta < 1 /10.$ On the probability space of $G(n,p)$ we have that a.a.s. for any $\mathcal{S} \in \mathcal{S}_{\delta}$ the majority game $(G(n,p) , \mathcal{S})^{>}$ will reach  unanimity after at most two rounds.
\end{lemma}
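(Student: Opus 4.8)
The plan is to reduce the statement to two \emph{a.a.s.} structural properties of $G(n,p)$ plus a short deterministic argument, following the strategy of~\cite{fountoulakis2019resolution}. Throughout write $d=np=\Lambda n^{1/2}$, fix a small constant $\eta>0$ (say $\eta=1/100$), and fix a constant $\eps>0$ small enough that $4(\delta+\eps)<(1-\eta)^2$ — possible because $\delta<1/10$ — and set $d_S(w):=|N_G(w)\cap S|$. The two properties I would establish are: \textbf{(P1)} every vertex has $d(v)\in[(1-\eta)d,(1+\eta)d]$ and every neighbourhood spans at most $\tfrac{\eps}{4}d^2$ edges; \textbf{(P2)} for every $v\in V_n$ and every $S\subseteq V_n$ with $|S|\le\delta n$ one has $\sum_{w\in N_G(v)}d_S(w)\le(\delta+\eps)d^2$.

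Property (P1) is a routine application of the Chernoff bound~\eqref{eq:Chernoff} together with a union bound over the $n$ vertices, using $d\gg\log n$. For (P2), fix $v$ and $S$; conditioning on $N_G(v)$ (and on the (P1)-event that $|N_G(v)|\le(1+\eta)d$), we may write $\sum_{w\in N_G(v)}d_S(w)=e(N_G(v)\setminus S,\,S)+2\,e(N_G(v)\cap S)$, where the second term is $\le\tfrac{\eps}{2}d^2$ by (P1) and the first term is a binomial random variable with mean at most $(1+\eta)\delta d^2$, so a multiplicative Chernoff bound gives $\mathbb{P}\big(e(N_G(v)\setminus S,\,S)\ge(\delta+\tfrac{\eps}{2})d^2\big)\le \exp(-c(\eps,\delta)\,d^2)=\exp(-c(\eps,\delta)\,\Lambda^2 n)$. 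Summing this failure probability over the $\le n$ choices of $v$ and the $\le 2^n$ choices of $S$, the union bound is $o(1)$ \emph{provided $\Lambda$ is chosen large enough} that $c(\eps,\delta)\Lambda^2>\ln 2$. This last point is the main obstacle: one needs a failure estimate that is exponentially small in $n$ in order to survive a union bound over all linear-sized sets $S$, and this is precisely what forces the regime $p\ge\Lambda n^{-1/2}$ with $\Lambda$ large (so that $d^2=\Lambda^2 n$).

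It remains to deduce the lemma deterministically from (P1)--(P2). Fix $G$ satisfying these properties and any $\mathcal{S}\in\hat{\mathcal{S}_{\delta}}$, so $|P_0|\le\delta n$. In the majority game~\eqref{eq:maj_dyn}, if $v\in P_1$ then $n_0(v;1)\ge n_0(v;0)$, i.e. $d_{P_0}(v)\ge d(v)/2\ge(1-\eta)d/2$; hence every vertex of $P_1$ lies in $\{\,w: d_{P_0}(w)\ge(1-\eta)d/2\,\}$. Therefore, for an arbitrary $v\in V_n$,
\[
n_1(v;1)=d_{P_1}(v)\le\bigl|\{\,w\in N_G(v):\,d_{P_0}(w)\ge(1-\eta)d/2\,\}\bigr|\le\frac{2}{(1-\eta)d}\sum_{w\in N_G(v)}d_{P_0}(w)\le\frac{2(\delta+\eps)}{1-\eta}\,d,
\]
where the last inequality is (P2) applied to $S=P_0$. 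Since $4(\delta+\eps)<(1-\eta)^2$, the right-hand side is strictly smaller than $(1-\eta)d/2\le d(v)/2=\tfrac12\bigl(n_1(v;1)+n_1(v;0)\bigr)$, so $n_1(v;1)<n_1(v;0)$ and the majority rule forces $S_2(v)=0$. As $v$ was arbitrary, $P_2=\varnothing$, the game is unanimous in state $0$ after two rounds, and it stays so thereafter; the symmetric argument handles configurations with at least $(1-\delta)n$ vertices in state $1$.
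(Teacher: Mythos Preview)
Your argument is correct. The paper itself does not prove this lemma --- it is quoted from~\cite{fountoulakis2019resolution}, and the only hint at the method appears in the proof of Theorem~\ref{Minority_Unanimous}: there one first shows that a.a.s.\ for every admissible initial set the minority after one round has size at most $d/10$ \emph{globally}, and then the minimum-degree bound $d(v)\ge d/2$ forces unanimity one round later. Your route is instead \emph{local}: you never bound $|P_1|$, but control $\sum_{w\in N_G(v)}d_{P_0}(w)$ for each $v$ via your property~(P2) and extract $n_1(v;1)<d(v)/2$ by a Markov-type count inside the neighbourhood. Both approaches rest on the same mechanism --- a Chernoff estimate giving failure probability $\exp(-\Omega(\Lambda^2 n))$ that survives a $2^n$-fold union bound over configurations, which is precisely what pins down the regime $p\ge\Lambda n^{-1/2}$ with $\Lambda$ large --- so neither extends the range of~$p$. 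Your version is slightly more direct; the paper's version yields the extra structural fact $|P_1|\le d/10$, which is what is actually invoked when the lemma is recycled for the minority game.

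Two cosmetic points, neither a real gap: your decomposition of $\sum_{w\in N_G(v)}d_S(w)$ drops the cross term $e\bigl(N_G(v)\cap S,\,S\setminus N_G(v)\bigr)$, but that term is absorbed by the same Chernoff bound; and your (P1) claim $e(N_G(v))\le\tfrac{\eps}{4}d^2$ can fail when $p$ is bounded away from~$0$, but in that range the lemma is immediate since then $d_{P_0}(w)\approx p|P_0|\le\delta d<d(w)/2$ for every $w$ and $P_1=\varnothing$ already after one round.
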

\begin{proof}[Proof of Theorem~\ref{Minority_Unanimous}]
While the above lemma directly concerns the majority dynamics game, with only some minimal adjustments to the argument, we may apply it to the minority game as well. We fix a minority game $(G, \mathcal{S})^{<},$ where $\mathcal{S} \in \hat{\mathcal{S}_{\delta}}$. By following the proof given in \cite{fountoulakis2019resolution}, after one round of applying the minority game rules we have with probability $1 - o(1)$ that $\lvert N_{1} \rvert \leq d / 10$. However a standard application of the Chernoff bound can show that a.a.s. the minimum degree of $G(n,p)$ is at least $d/2$. Therefore for all $v \in V_{n}$ we have that $n_{1}(v; 1) > n_{1}(v; 0)$. In turn, we deduce that $S_{2}(v) = 0$ for all $v \in V_{n}$. As unanimity is achieved, it is clear that the system will also display periodic behaviour of period two from the evolution rules of the minority dynamics~\eqref{Minority_Game}. 
\end{proof}

%We first consider the original minority game $(G, \mathcal{S}_{1/2})^{<}$ and condition on 
%$\mathcal{E}_{c}^-.$ We apply Corollary \ref{TwoRoundZero} with an appropriate $\varepsilon < 1 / 10$ to deduce
% that $\lvert N_{2} \rvert > n( 1 - \varepsilon),$ with high probability. Note that this vertex configuration belongs to 
% $\mathcal{\hat{S}}_{1/10}.$  So by Lemma \ref{Final2RoundEquiv}, $G(n,p)$ is such that the system will have 
% reached unanimity a.a.s. by round four. 
%\end{proof}

\section{Discussion}

In this paper we study the evolution of games on $G(n,p)$ under the best response rule. Our first result concerns the rapid formation of unanimous strategies on the node system $(G(n,p), Q, \mathcal{S}_{1/2})$ for $p \geq \Lambda n^{-1/2}.$ As a byproduct of this analysis, we also prove an analogous result of Fountoulakis et al. \cite{fountoulakis2019resolution} regarding the rapid formation of unanimity in the random graph minority game. Our second main result concerns the formation of unanimous strategies in sparser regimes, namely beyond the connectivity threshold, and on the presence of a strategic bias given by $\lambda \neq 1.$ A natural question of the sparse regime is to ask whether we can remove the condition that $\lambda \neq 1.$ As previously seen, the case $\lambda = 1$ reduces to the problem of majority and minority dynamics. However the study of majority dynamics for $p = o\left(n^{-1/2}\right)$ imposes immense complications. 
Little is currently known about how these systems evolve apart from becoming eventually periodic. 
However the following was conjectured by Benjamini et al.~\cite{Itai2014Unanimity}.
\begin{conjecture}[\cite{Itai2014Unanimity}]
With high probability over choice of random graph and choice of initial state, if $p \geq d/n$ then the following holds:
\begin{enumerate}
    \item If $d \gg 1$ then, for any $\varepsilon > 0$ and $n$ sufficiently large we have,
    
    $$\lim_{t \rightarrow \infty} \hspace{0.05cm} \Big\lvert \lvert P_{2t} \rvert - \lvert N_{2t} \rvert \Big\rvert \in \left[(1-\varepsilon)n, n\right].$$ 
    
    \item If $d$ is bounded then for any $\varepsilon > 0$ and for $n$ sufficiently large,
    
    $$ \lim_{t \rightarrow \infty} \hspace{0.05cm} \Big\lvert \lvert P_{2t} \rvert - \lvert N_{2t} \rvert \Big\rvert  \in [(1- \varepsilon/2)n, (1+\varepsilon/2)n].$$
\end{enumerate}
\end{conjecture}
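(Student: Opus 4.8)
The plan is to pass from the vertex-level dynamics to the scalar density process $a_t := |P_t|/n$ and to show that, at even times, $a_{2t}$ is driven toward $0$ or $1$, so that $\big\lvert |P_{2t}| - |N_{2t}| \big\rvert = n\lvert 2a_{2t}-1\rvert$ approaches $n$. First I would fix the initial bias exactly as in Lemma~\ref{intial_skew}: by the Local Limit Theorem one has $\lvert a_0 - 1/2\rvert = \Theta(n^{-1/2})$ a.a.s., and by the symmetry of $\mathcal{S}_{1/2}$ I may condition on $a_0 > 1/2$ without loss of generality. The heart of the argument is a \emph{density-evolution} (mean-field) recursion: if at step $t$ each neighbour of a typical vertex $v$ independently plays $1$ with probability $a_t$, then $v$ plays $1$ at step $t+1$ with probability $F_d(a_t)$, where $F_d(a) = \mathbb{P}(\mathrm{Bin}(d,a) > d/2) + a\,\mathbb{P}(\mathrm{Bin}(d,a) = d/2)$, the tie term encoding the rule that a vertex retains its state when its neighbourhood is balanced. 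I would then aim to establish the concentration statement $a_{t+1} = F_d(a_t) + o\big(\lvert a_t - 1/2\rvert + n^{-1/2}\big)$ a.a.s.

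Next I would analyse the deterministic map $F_d$. A Gaussian approximation gives $F_d(a)\approx \Phi\big(2\sqrt{d}(a-1/2)\big)$ near $a=1/2$, hence a repelling fixed point there with $F_d'(1/2) = \Theta(\sqrt{d})$ once $d$ is large, while $F_d'(1/2) > 1$ even for small odd $d$ (for instance $F_3(a) = 3a^2 - 2a^3$ has $F_3'(1/2) = 3/2$); the only stable fixed points lie near $0$ and $1$. Consequently the deterministic orbit starting at $a_0 = 1/2 + \Theta(n^{-1/2})$ escapes the neighbourhood of $1/2$ after $O\!\big(\log n / \log F_d'(1/2)\big)$ steps and is then attracted to $1$. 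Feeding this back into the concentration estimate, and accounting for the period-$2$ behaviour permitted by the Goles--Olivos theorem, I would conclude $a_{2t} \to 1$, which yields Part~1 in the regime $d \gg 1$: the limit of $\big\lvert |P_{2t}| - |N_{2t}| \big\rvert$ lies in $[(1-\varepsilon)n, n]$ because the steepness $F_d'(1/2) \to \infty$ forces the attracting fixed point arbitrarily close to $1$.

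For Part~2, where $d$ is a bounded constant, I would replace the global density recursion by a \emph{local weak convergence} argument: the rooted neighbourhoods of $G(n,d/n)$ converge locally to the Poisson$(d)$ Galton--Watson tree, and majority dynamics with i.i.d.\ uniform initial labels is a local (finite-radius) rule, so for each fixed $t$ the law of $S_t$ at a typical vertex converges to the corresponding quantity on the limiting tree. I would then analyse the recursive distributional equation for the tree process to compute $\lim_n \mathbb{P}(S_{2t}(v)=1)$ and show it tends to the value predicted by the fixed point of $F_d$, which approaches unanimity as the constant $d$ grows. Here one must treat separately the frozen small components (isolated vertices and very small trees, a fraction $\Theta(e^{-d})$), which never synchronise: if the remaining vertices reach state $1$ and the frozen ones split evenly, then $\big\lvert |P_{2t}| - |N_{2t}| \big\rvert \approx (1-e^{-d})n$, which explains precisely why the target interval is $[(1-\varepsilon/2)n,(1+\varepsilon/2)n]$ rather than exact unanimity, and forces $d$ to be a large enough constant relative to $\varepsilon$.

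The main obstacle is the concentration step in the regime $p = o(n^{-1/2})$, i.e.\ $d = o(\sqrt{n})$. The per-vertex signal produced in one round is, by Lemma~\ref{ExpAndVar}, of order $(d^3/n)^{1/2}$, whereas the per-vertex standard deviation is of order $\sqrt{d}$; their ratio is $d/\sqrt{n}$, which drops below $1$ precisely when $d = o(\sqrt{n})$. Thus a single round no longer creates a macroscopic majority --- the feature that made Theorem~\ref{MajorityStable} tractable --- and one is forced to track the \emph{slow} amplification of the bias over $\Omega(\log n)$ rounds. Over that many rounds the states of distinct vertices become heavily correlated through the shared graph and shared history, so the independence assumption underlying $F_d$ fails and the error terms are no longer controllable by a two-round exposure or a simple martingale. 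Making this dependency control rigorous --- most plausibly through a careful second-moment analysis of correlated pairs, or by interpolating against the tree process at slowly growing radius --- is exactly the gap that keeps the conjecture open, and is where I expect the real difficulty to lie.
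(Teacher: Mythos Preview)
The statement you are attempting to prove is not a theorem of the paper but an open \emph{conjecture}, attributed to Benjamini et al.~\cite{Itai2014Unanimity} and quoted in the Discussion section precisely as motivation for future work. The paper contains no proof of it; on the contrary, the authors explicitly say that ``the study of majority dynamics for $p = o(n^{-1/2})$ imposes immense complications'' and that ``it is still unclear how to approach these sparser interacting node systems when $\lambda = 1$.'' Their Theorem~\ref{Spare_skewed_Stabilistion} is offered only as a partial analogue under the additional hypothesis $\lambda \neq 1$, which is exactly the hypothesis the conjecture forbids. So there is nothing in the paper to compare your argument against.

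Your proposal is, as you yourself recognise in the final paragraph, not a proof but a research outline with a named gap. You correctly locate the obstruction: the mean-field recursion $a_{t+1} \approx F_d(a_t)$ requires a concentration estimate whose signal-to-noise ratio is $d/\sqrt{n}$, and once $d = o(\sqrt{n})$ one must track $\Omega(\log n)$ rounds through a graph whose vertex states have become heavily correlated. That is precisely the barrier that keeps the conjecture open, and nothing in your sketch closes it --- the phrases ``careful second-moment analysis of correlated pairs'' and ``interpolating against the tree process at slowly growing radius'' name possible tools but do not carry out the work. Your treatment of Part~2 has an additional issue: you conclude that ``$d$ [must] be a large enough constant relative to $\varepsilon$,'' but the conjecture as stated fixes a bounded $d$ and then quantifies over \emph{all} $\varepsilon > 0$, so an argument that only works for $d$ large in terms of $\varepsilon$ does not address the claim (and indeed suggests the conjecture, as literally transcribed in the paper, may itself be mis-stated for small constant $d$). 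In short: your diagnosis of the difficulty is accurate and matches the paper's own assessment, but what you have written is a plan, not a proof, and the paper offers no proof either.
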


These results suggest the idea of long term almost unanimity, when $d\gg 1$. 
Theorem \ref{Spare_skewed_Stabilistion} can be thought as an approximate approach to studying these kinds of systems. By introducing the strategic bias $\lambda \not =1$, we are able to show a stronger form of (1) in the above conjecture. In fact, we have been able to identify precisely the critical density of the random graph 
around which its largest connected component (which contains the overwhelming majority of its vertices) 
achieves unanimity. Furthermore, non-unanimity occurs, we identified those substructures which play different 
strategies to the majority of the vertices in this component. 
However, it is still unclear how to approach these sparser interacting node systems when $\lambda = 1.$ 

Another direction for further study is the setting where the game played by the players has more than 2 strategies. 
In this case, it is not clear whether one of the strategies prevails eventually or we have the co-existence of two or more strategies. 
%The difficulty of studying these systems is a direct consequence of the failure of the proof of Lemma \ref{Initial Sparse}, specifically no such $\delta$ can be chosen for the Chernoff bound. 
%However, Lemma \ref{Sparse_Algorithm} holds independently for any positive value of $\lambda.$ Thus in order to analyse majority dynamics for $p$ down to the connectivity threshold; it suffices for us to show that there exists a time $T$ where $\mu_{T} < n^{\gamma}$ for a node system with $\lambda = 1,$ however it is largely unclear whether this can be achieved.

\appendix

\section{Derivation of evolution rules}

In this section we briefly detail the derivation of equations $(1)$ and $(2).$ We recall that if an agent chooses strategy $i$ in round $t,$ then the incentive for them to switch to strategy $1-i$ in round $t+1,$ is given by the following condition:

$$
  n_{t}(v; 0) \left(q_{i , 0} - q_{ 1 - i, 0} \right) <  n_{t}(v; 1)\left(q_{ 1 - i , 1} - q_{i , 1}\right).$$
  
As we have assumed that $Q$ is non-degenerate, we have two cases: Either $q_{0,0} > q_{1,0}$ and $q_{1,1} > q_{0,1},$ or $q_{0,0} < q_{1,0}$ and $q_{1,1} < q_{0,1}.$
Suppose that the former case occurs, then we must check the incentive for $i=0$ an $i=1.$ Suppose $S_{t}(v) = 0,$ then the incentive for changing strategy is given as follows:

$$n_{t}(v; 0) \left(q_{0 , 0} - q_{ 1, 0} \right) <  n_{t}(v; 1)\left(q_{ 1  , 1} - q_{0 , 1}\right).$$

We now re-arrange the above, and recall that $\lambda = (q_{1,1} - q_{0,1})/(q_{0,0}-q_{1,0}),$ to form the first evolution rule: 

\begin{equation}n_{t}(v;0) < n_{t}(v;1) \frac{q_{1,1} - q_{0,1}}{q_{0,0}-q_{1,0}} = n_{t}(v;1) \lambda. \tag{$\mathrm{A}1$} \end{equation}

Similarly if we instead have that $S_{t}(v) = 1,$ then the incentive is expressed as:

$$ n_{t}(v; 0) \left(q_{1 , 0} - q_{ 0, 0} \right) <  n_{t}(v; 1)\left(q_{ 0  , 1} - q_{1 , 1}\right). $$

We recall that in this case we have $q_{0,1} < q_{1,1},$ and therefore we re-arrange as follows:

\begin{equation} n_{t}(v;1) < n_{t}(v;0) \frac{q_{1,0} - q_{0,0}}{q_{0,1} - q_{1,1}} = n_{t}(v;0)\frac{q_{0,0} - q_{1,0}}{ q_{1,1} - q_{0,1}} = n_{t}(v;0) \frac{1}{\lambda}. \tag{$\mathrm{A}2$} \end{equation}

Combing equations $(\mathrm{A}1)$ and $(\mathrm{A}2)$ provides the required evolution rules as described by $(1)$. For the second case, an identical argument will produce the rules given by $(2)$.

\bibliography{references}{}

\bibliographystyle{plain}

\end{document}